\newtheorem{Theorem}{Theorem}[section]
\newtheorem{Lemma}[Theorem]{Lemma}
\newtheorem{Corollary}[Theorem]{Corollary}
\newtheorem{Definition}[Theorem]{Definition}
\newtheorem{Proposition}[Theorem]{Proposition}
\theoremstyle{remark} 
\theoremstyle{definition} 
\begin{document}
\title{The genus of the Erd\H{o}s-R\'enyi random graph \\ and the fragile genus property}
\author{Chris Dowden$^{*}$, Mihyun Kang$^{*}$, and Michael Krivelevich$^{\ddagger}$ \\ \\
\today}
\thanks{$^{*}$ 
Institute of Discrete Mathematics, 
Graz University of Technology, 
Steyrergasse 30,
8010 Graz,
Austria,  
{\tt \{dowden,kang\}@math.tugraz.at}.
Supported by Austrian Science Fund (FWF): P27290 and W1230II\phantom{}}
\thanks{$^{\ddagger}$ 
School of Mathematical Sciences, 
Sackler Faculty of Exact Sciences, 
Tel Aviv University,  
Tel Aviv 6997801,
Israel, 
{\tt krivelev@post.tau.ac.il}.
Supported in part by USA-Israel BSF grant 2014361, and by grant 1261/17 from Israel Science Foundation.}
\thanks{An extended abstract of this paper has been published in the proceedings of the 29th International Conference on Probabilistic, Combinatorial and Asymptotic Methods for the Analysis of Algorithms (AofA 2018),
Leibniz International Proceedings in Informatics (LIPIcs) 110 (2018), article number 17, 13 pages.}

\setlength{\unitlength}{1cm}

\begin{abstract}
We investigate the genus $g(n,m)$ of the Erd\H{o}s-R\'enyi random graph $G(n,m)$,
providing a thorough description of how this relates to the function $m=m(n)$,
and finding that there is different behaviour depending on which `region' $m$ falls into.

Results already exist for $m \le \frac{n}{2} + O(n^{2/3})$
and $m = \omega \left( n^{1+\frac{1}{j}} \right)$ for $j \in \mathbb{N}$,
and so we focus on the intermediate cases.
We establish that
$g(n,m) = (1+o(1)) \frac{m}{2}$ whp (with high probability)
when $n \ll m = n^{1+o(1)}$,
that $g(n,m) = (1+o(1)) \mu (\lambda) m$ whp
for a given function $\mu (\lambda)$
when $m \sim \lambda n$ for $\lambda > \frac{1}{2}$,
and that $g(n,m) = (1+o(1)) \frac{8s^{3}}{3n^{2}}$ whp
when $m = \frac{n}{2} + s$ for $n^{2/3} \ll s \ll n$.

We then also show that the genus of a fixed graph can increase dramatically if a small number of random edges are added. 
Given any connected graph with bounded maximum degree,
we find that the addition of $\epsilon n$ edges will whp result in a graph with genus $\Omega (n)$,
even when $\epsilon$ is an arbitrarily small constant!
We thus call this the `fragile genus' property. 
\end{abstract}
\maketitle

\section{Introduction}

\subsection{Background and motivation}

The Erd\H{o}s-R\'enyi random graph $G(n,m)$
(taken uniformly at random from the set of all labelled graphs with vertex set $[n] = \{ 1, 2, \ldots, n \}$ and exactly $m$ edges)
and the binomial random graph $G_{n,p}$
(the graph on $[n]$ where every edge occurs independently at random with probability $p$)
have been a source of fascination for many decades,
producing numerous exciting results
(see, for example, \cite{bol,fri,janluc} for research monographs devoted entirely to random graphs).

In this work,
we are interested in the genus of a graph.
A graph is said to have genus $g$ if this is the minimum number of handles that must be attached to a sphere in order to be able to embed the graph without any crossing edges.
Hence, the simplest case when $g=0$ corresponds to planar graphs.

The genus is one of the most fundamental properties of a graph,
and plays an important role in a number of applications and algorithms
(e.g.~colouring problems~\cite{moh} and the manufacture of electrical circuits~\cite{gib,lip}).
It is naturally intriguing to consider the genus of a random graph,
and such matters are also related to random graphs on surfaces
(see, for example,
Question 8.13 of~\cite{kmsfull}
and Section 9 of~\cite{dks}).
In addition,
results on the genus of random bipartite graphs~\cite{jing2}
were recently used to provide a polynomial-time approximation scheme for the genus of dense graphs~\cite{jing1}.

The genus of the binomial random graph was first studied by Archdeacon and Grable~\cite{arch},
who showed that $G_{n,p}$ has genus $(1+o(1)) \frac{pn^{2}}{12}$ with high probability
(whp for short, meaning with probability tending to $1$ as $n \to \infty$
--- see Definition~\ref{orderdefn})
if $p^{2}(1-p^{2}) \geq \frac{8(\log n)^{4}}{n}$.
A particularly notable consequence of this result (by taking $p = \frac{1}{2}$)
is that the classical uniform random graph $G(n)$
(taken uniformly at random from the set of all labelled graphs on $[n]$)
must then have genus $(1+o(1)) \frac{n^{2}}{24}$ whp.

As noted in~\cite{arch},
results for the genus of $G_{n,p}$ can be transferred into analogous results for the genus $g(n,m)$ of $G(n,m)$.
Taking into account later work by R\"odl and Thomas~\cite{rod}
(which deals with a substantially wider range for $p$),
these show that $g(n,m) = (1+o(1)) \frac{m}{6}$ whp when $m = \Theta (n^{2})$ and that $g(n,m) = (1+o(1)) \frac{jm}{2(j+2)}$ whp when $n^{1+ \frac{1}{j+1}} \ll m \ll n^{1+ \frac{1}{j}}$ for $j \in \mathbb{N}$.

Separately, important work has also been carried out to determine the probability that $G(n,m)$ is planar (i.e.~has zero genus)
when $m$ is comparatively small.
In particular, it is now well-known that $G(n,m)$ is planar whp when $m < \frac{n}{2} - \omega \left( n^{2/3} \right)$ (see~\cite{lucpitwie})
and that $\liminf \mathbb{P} [G(n,m) \textrm{ is planar}] > 0$
when $m = \frac{n}{2} + O \left( n^{2/3} \right)$ (see~\cite{lucpitwie} and~\cite{noy}).
For other interesting results in this area, 
see also~\cite{janpaper} and~\cite{kangluc}.

It is our aim here to bridge the gap between the $m \gg n^{1+ \frac{1}{j+1}}$ and $m = \frac{n}{2} + O \left( n^{2/3} \right)$ results.
We provide a thorough description of this intermediate region,
finding that there is different behaviour depending on whether
(i)~$n \ll m = n^{1+o(1)}$,
(ii)~$m \sim \lambda n$ for $\lambda > \frac{1}{2}$,
or (iii)~$m = \frac{n}{2} + s$ for $s>0$ satisfying $n^{2/3} \ll s \ll n$.

We then turn our attention to an interesting related problem,
concerning the genus of a graph that is partially random.
Here, we take a base graph,
and examine the supergraph formed by adding some random edges
(so $G(n,m)$ corresponds to the special case when the base graph is empty).

This type of model is sometimes called a `randomly perturbed' graph,
and was first introduced in~\cite{boh},
where the number of random edges needed for Hamiltonicity was studied
(the model is also related to the study of `smoothed analysis' of algorithms,
initiated in~\cite{spi}).

Subsequent work has then involved investigations of the clique number, chromatic number, diameter, and vertex-connectivity~\cite{bfkm};
subgraphs and Ramsey properties~\cite{krivsudtet};
expansion properties~\cite{flax, krivsmooth};
and subtrees~\cite{bot, krivtree};
as well as generalisations to hypergraphs and digraphs~\cite{flaxfri, krivdigraph, mcmy, sud}.

In this paper,
our focus is on the genus.
We take an arbitrary connected base graph $H$ with bounded maximum degree,
and examine the supergraph $G$ formed by adding random edges.
Rather surprisingly, we find that $G$ will whp have high genus,
even if $H$ has low genus and the number of random edges added is relatively small.
We thus call this the `fragile genus' property.

\subsection{Main results}

The main contributions of this paper are two-fold.
Firstly, we obtain a complete picture of the genus $g(n,m)$ 
of the Erd\H{o}s-R\'enyi random graph $G(n,m)$
for all values of $m$,
by producing precise results for the previously uncharted regions.
Secondly, we then initiate the study of how the genus of a fixed graph is affected when random edges are added,
discovering the fragile genus property.

Let us now present our main results in detail.
In the first of these,
we consider $g(n,m)$ for the region when $n \ll m = n^{1+o(1)}$
(e.g.~this would be the case for a function such as $m = n \ln n$).
Note that this is not an area that is covered by existing work,
but we obtain the following tight bounds:

\begin{Theorem} \label{deltacor}
Let $m=m(n)$ satisfy $n \ll m = n^{1+o(1)}$.
Then with high probability 
\begin{displaymath}
(1-o(1)) \frac{m}{2} \leq g(n,m) \leq \frac{m}{2}.
\end{displaymath}
\end{Theorem}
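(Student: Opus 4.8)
The plan is to prove the two inequalities separately. The upper bound $g(n,m)\le m/2$ is a deterministic fact valid for any graph with $m$ edges: since genus is additive over connected components, it suffices to treat a connected graph $G$, whose minimum-genus embedding is cellular, so Euler's formula $n-m+f=2-2g(G)$ with $f\ge 1$ gives $2g(G)=2-n+m-f\le 1-n+m\le m$. Summing over components yields $g(n,m)\le m/2$; since $G(n,m)$ is whp connected in this regime, one even obtains $g(n,m)\le(m-n+1)/2$ whp.

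For the lower bound, I would exploit the fact that $G(n,m)$ has very few short cycles in this regime and then apply the girth-refined Euler bound. The expected number of cycles of length at most $\ell$ in $G(n,m)$ is at most $\sum_{k=3}^{\ell}\frac{1}{2k}\left(\frac{2m}{n}\right)^{k}\le\ell\left(\frac{2m}{n}\right)^{\ell}$. Writing $\frac{2m}{n}=n^{\delta(n)}$ with $\delta(n)\to 0$ (this is precisely the hypothesis $m=n^{1+o(1)}$), one can choose a threshold $\ell=\ell(n)\to\infty$ slowly enough that $\ell(n)\delta(n)\to 0$, so this expectation is $n^{o(1)}=o(m)$; by Markov's inequality, whp $G(n,m)$ contains at most $o(m)$ cycles of length at most $\ell$. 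Deleting one edge from each of them produces a subgraph $G'$ with girth exceeding $\ell$, with $m'=(1-o(1))m$ edges, and with at most $n$ vertices. In a cellular embedding of $G'$ (applied componentwise), any face whose boundary walk has no repeated edge decomposes into cycles of length at least $\ell$ and hence has length at least $\ell$, so there are at most $2m'/\ell=o(m)$ such faces; the only remaining faces have a repeated edge, which must be a bridge of $G'$, and there are at most $n$ bridges. Hence $G'$ has $f'=o(m)$ faces, and Euler's formula gives $2g(G')\ge m'-n-f'=(1-o(1))m$, using $n=o(m)$. Since genus is monotone under taking subgraphs, $g(n,m)\ge g(G')\ge(1-o(1))m/2$ whp.

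The main obstacle is the lower bound, and specifically the calibration inside the girth argument: one must pick the growing girth threshold $\ell(n)$ correctly against the (possibly only slowly growing) average degree $2m/n$, and verify that the number of short cycles really is $o(m)$ whp --- not merely $O(m)$, which would be useless --- so that deleting those edges still leaves $(1-o(1))m$ edges. The treatment of faces and bridges so that the face-count bound $f'=o(m)$ is rigorous is a further technical wrinkle, but it is routine given that the average degree tends to infinity. The remaining ingredients --- cellularity of minimum-genus embeddings, additivity of genus over components, monotonicity under subgraphs, and Euler's formula --- are classical.
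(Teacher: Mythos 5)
Your upper bound is correct and is essentially the paper's argument (Euler's formula with $f\ge 1$ and $\kappa\le n$). One small aside is off, though: $G(n,m)$ is \emph{not} whp connected for all $m$ with $n\ll m = n^{1+o(1)}$; e.g.\ $m = n\sqrt{\log n}$ lies below the connectivity threshold $m\sim\tfrac{n}{2}\log n$. This does not affect the bound $g(n,m)\le m/2$, which needs no connectivity.

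For the lower bound your overall strategy matches the paper's: show that whp the number of short cycles is $o(m)$, deduce that the number of faces in a minimum-genus embedding is $o(m)$, and finish with Euler's formula together with $n=o(m)$. Two of your choices differ from the paper in a harmless way: you estimate the expected number of short cycles directly in $G(n,m)$ (the paper works in $G_{n,p}$ and transfers the face bound to $G(n,m)$ via a new monotonicity corollary for $f(G)-e(G)$), and you pass to a high-girth subgraph $G'$ by deleting one edge per short cycle and using monotonicity of genus (the paper keeps $G$ and directly bounds the number of short faces by the number of short cycles). Both routes work, and your calibration of $\ell(n)$ against $\delta(n)$ is fine.

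There is, however, a genuine gap in your face count for $G'$: the claim that an edge $e$ appearing twice on the boundary walk of a face must be a bridge of $G'$ is false on surfaces of positive genus. Concretely, take a planar embedding of $K_5-e$ (a triangulation with vertex~$4$ and vertex~$5$ on no common face), add a handle joining a face incident to $4$ with a face incident to $5$, and route $e=\{4,5\}$ through the handle; the result is a minimum-genus (toroidal) embedding of $K_5$ in which $e$ lies twice on the boundary of the single face it splits off, yet $e$ is not a bridge of the $4$-connected graph $K_5$. (Your claim is the Jordan-curve fact on the sphere, but it does not survive to higher genus.) Fortunately the step is easy to repair, and essentially the repair is what the paper does: in a cellular embedding of a connected component $K$ of $G'$ that is not a tree, \emph{every} face boundary walk must contain a cycle of $K$ (if the edges traced by a face walk formed a tree $T$, then at any vertex of $T$ the walk, following the rotation, would be forced onto a non-$T$ edge once all $T$-edges at that vertex have both sides on the face, forcing $T$ to be all of $K$; alternatively pass to the $2$-core, where minimum degree $\ge 2$ makes this immediate). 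Since $G'$ has girth exceeding $\ell$, every such face has length greater than $\ell$, so such components contribute at most $2m'/\ell=o(m)$ faces, and the tree components of $G'$ contribute at most $\kappa(G')\le n=o(m)$ further faces. With this substitution your bound $f(G')=o(m)$ and the rest of the argument are correct.
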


Perhaps the most obvious gap in previous knowledge concerns the case when $m$ is linear in $n$,
but above the threshold for planarity
(i.e.~the strictly supercritical regime).
We show that the genus behaves smoothly in this region:

\begin{Theorem} \label{lambdathm}
Let $m=m(n) \sim \lambda n$ for some fixed $\lambda > \frac{1}{2}$.
Then with high probability
\begin{displaymath}
g(n,m) = (1+o(1)) \mu (\lambda) m,
\end{displaymath}
where the function
$\mu: \left( \frac{1}{2}, \infty \right) \to \mathbb{R},
\lambda \mapsto \mu (\lambda)$
defined by

\begin{eqnarray*}
\mu (\lambda) = \frac{1}{4 \lambda^{2}}
\sum_{r=1}^{\infty} \frac{r^{r-2}}{r!}
\left( 2 \lambda e^{-2 \lambda} \right)^{r}
+ \frac{1}{2} \left(1 - \frac{1}{\lambda}\right)
\end{eqnarray*}
is strictly positive, monotonically increasing, continuous, and satisfies
$\mu (\lambda) \to 0$ as $\lambda \to \frac{1}{2}$
and $\mu (\lambda) \to \frac{1}{2}$ as $\lambda \to \infty$.
\end{Theorem}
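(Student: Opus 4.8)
The plan is to reduce the genus of $G(n,m)$ to that of a single random multigraph, the \emph{kernel} of the giant component, and then to show that a random graph of minimum degree at least $3$ uses up topology essentially as efficiently as Euler's formula permits. In the supercritical regime $m \sim \lambda n$ with $\lambda > \tfrac12$, standard results give that whp $G(n,m)$ has a unique giant component $\mathcal{G}$ and every other component is a tree or is unicyclic, hence planar. Since the genus is additive over connected components and over blocks (Battle--Harary--Kodama--Youngs), is invariant under subdividing or suppressing degree-$2$ vertices, and vanishes on forests, it follows that whp $g(n,m) = g(\mathcal{G}) = g(K)$, where $K$ is the kernel of $\mathcal{G}$, obtained from its $2$-core by suppressing all degree-$2$ vertices; thus $K$ is a connected multigraph of minimum degree $\ge 3$ and bounded maximum degree, with $v_K$ vertices and $e_K$ edges, and $e_K - v_K = e(\mathcal{G}) - v(\mathcal{G})$.

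I would then pin down $g(K)$ up to a factor $1+o(1)$. The upper bound is free: Euler's formula gives $g(H) \le \lfloor\beta(H)/2\rfloor$ for every connected (multi)graph $H$, where $\beta(H) = e(H) - v(H) + 1$, so $g(K) \le \tfrac12(e_K - v_K + 1)$. For the matching lower bound it suffices to show that whp every rotation system of $K$ yields only $o(v_K)$ faces, since then $g(K) = \tfrac12\big(e_K - v_K + 2 - f_{\max}(K)\big) = (1+o(1))\tfrac12(e_K - v_K)$, which is $\Theta(n)$ because $\lambda$ is a fixed constant exceeding $\tfrac12$. To bound the faces of an arbitrary rotation system I would split them by their length. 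As the edges consecutive along a face boundary are consecutive in the rotation at their shared vertex, and $K$ has minimum degree $\ge 3$, each face of length at most $L$ is a non-backtracking closed walk of length at most $L$ in $K$, so the number of short faces is at most twice the number of such walks. Conditioning on the (concentrated) degree sequence of the $2$-core and using that $K$ is well approximated by the configuration model on that sequence, a first-moment estimate shows that for $L = \epsilon\log n$ with $\epsilon$ small the expected number of non-backtracking closed walks of length at most $L$ is $n^{1-\Omega(1)} = o(n)$, so whp there are $o(n)$ short faces; the faces of length more than $L$ number at most $2e_K/L = O(n/\log n) = o(n)$ since all face lengths sum to $2e_K$. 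Adding the two bounds gives $f_{\max}(K) = o(n)$ whp.

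It remains to identify the constant. Writing $c = 2\lambda$, the quantity $e(\mathcal{G}) - v(\mathcal{G})$ equals $m - n$ minus the analogous quantity for the non-giant part, which is exactly $-N_{\mathrm{tree}}$ since that part is a union of trees and unicyclic graphs; thus $e(\mathcal{G}) - v(\mathcal{G}) = (\lambda - 1)n + N_{\mathrm{tree}} + o(n)$, where $N_{\mathrm{tree}}$ is the number of tree components outside $\mathcal{G}$. The expected number of tree components on $r$ vertices is $(1+o(1))\tfrac{n}{c}\cdot\tfrac{r^{r-2}}{r!}(ce^{-c})^r$ and these counts concentrate, so whp $N_{\mathrm{tree}} = (1+o(1))\tfrac{n}{2\lambda}\sum_{r\ge1}\tfrac{r^{r-2}}{r!}(2\lambda e^{-2\lambda})^r$. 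Combining,
\begin{displaymath}
g(n,m) = (1+o(1))\,\tfrac12\big(e(\mathcal{G}) - v(\mathcal{G})\big) = (1+o(1))\left[\frac{1}{4\lambda}\sum_{r\ge1}\frac{r^{r-2}}{r!}\big(2\lambda e^{-2\lambda}\big)^{r} + \frac{\lambda - 1}{2}\right]n = (1+o(1))\,\mu(\lambda)\,m,
\end{displaymath}
using $m = (1+o(1))\lambda n$ and that $\mu(\lambda)$ is a fixed positive constant. For the analytic properties of $\mu$, I would substitute $c = 2\lambda \in (1,\infty)$ and let $\tilde{c} = \tilde{c}(c) \in (0,1)$ be the conjugate root of $t e^{-t} = c e^{-c}$; the tree-function identity gives $\sum_{r\ge1}\tfrac{r^{r-2}}{r!}(ce^{-c})^r = \tilde c - \tfrac12\tilde c^{2}$, so $\mu(\lambda) = \tfrac{1}{c^{2}}\big(\tilde c - \tfrac12\tilde c^{2}\big) + \tfrac12 - \tfrac1c$. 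Continuity is immediate; as $\lambda \to \tfrac12^{+}$ we have $c \to 1^{+}$, $\tilde c \to 1^{-}$ and $\mu \to 0$, and as $\lambda \to \infty$ we have $\tilde c \to 0$ and $\mu \to \tfrac12$; monotonicity follows by differentiating (using $\tilde c\,' = \tilde c(1-c)/\big(c(1-\tilde c)\big)$), and positivity is then a consequence of monotonicity together with $\mu(\tfrac12^{+}) = 0$.

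The main obstacle is the lower bound on $g(K)$: one must control the faces of \emph{every} rotation system of the random kernel, which reduces to showing that the kernel is locally tree-like up to logarithmic distance, i.e.\ has only $o(n)$ short non-backtracking closed walks whp. By comparison, the block-and-subdivision reduction, the Euler upper bound, and the evaluation of the constant via the structure of the giant component are comparatively routine.
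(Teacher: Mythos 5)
Your proposal is correct in its broad strokes but takes a genuinely different route from the paper for both the combinatorial core and the analytic tail, so it is worth comparing them. For the combinatorial part, the paper stays with $G(n,m)$ (via $G_{n,p}$ and the monotonicity transfer of Corollary~\ref{faceequiv}) and bounds the face count by counting \emph{simple short cycles} directly: for any fixed $j$, the expected number of cycles of length at most $j+1$ is $o(n^{2}p)$, each such cycle lies on the boundary of at most two faces, and faces of length $\geq j+2$ number at most $2e/(j+2)$; letting $j\to\infty$ gives $f(n,m)=o(m)$ whp (Corollary~\ref{facecor}). The genus then falls out of Euler's formula together with Bollob\'as's estimate $\kappa(n,m)\sim u(2\lambda)n$ whp. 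You instead pass to the kernel $K$ of the giant, and bound, uniformly over rotation systems, the number of non-backtracking closed walks of length at most $\epsilon\log n$ in $K$ via a configuration-model first moment, together with the $2e_K/L$ bound for long faces. Both routes work, but the paper's is substantially lighter: it entirely avoids conditioning on the kernel's degree sequence, the contiguity of $K$ with the configuration model, and the non-backtracking walk count (a nontrivial step you correctly flag as the main obstacle). One factual slip in your write-up: $K$ does \emph{not} have bounded maximum degree (the maximum degree of $G(n,\lambda n)$, and hence of its $2$-core and kernel, is $\Theta(\log n/\log\log n)$ whp). This is inessential --- the walk-count argument only needs the second moment $\sum_i d_i(d_i-1)/\sum_i d_i$ of the kernel degree sequence to be a bounded constant, which holds --- but as written the claim is false and must not be relied upon. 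For the analytic part your route is actually cleaner than the paper's. The paper proves monotonicity of $\mu$ indirectly, by first establishing convexity of $u$ via a probabilistic argument (the probability that the next random edge merges two components is nonincreasing in the edge index), and then proves strict positivity of $\mu$ by a roundabout appeal to Theorem~\ref{sthm}. Your closed form via the conjugate root $\tilde c\in(0,1)$ of $te^{-t}=ce^{-c}$ and the tree-function identity $\sum_{r\geq1}\frac{r^{r-2}}{r!}(ce^{-c})^{r}=\tilde c-\tfrac12\tilde c^{2}$ gives, after a short computation with $\tilde c' = \tilde c(1-c)/\bigl(c(1-\tilde c)\bigr)$, that $\frac{d\mu}{dc}=\frac{(\tilde c-1)(\tilde c-c)}{c^{3}}>0$ for $c=2\lambda>1$, which yields continuity, strict monotonicity, strict positivity, and both limits in one stroke.
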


One of the most fascinating areas of study in random graphs has been the behaviour of $G(n,m)$ when $m$ is close to $\frac{n}{2}$,
as many important features have been found to emerge around this key point.
Here, we examine in detail the slightly supercritical regime when $m = \frac{n}{2} + s$ for $s>0$ satisfying $n^{2/3} \ll s \ll n$
(i.e.~precisely the region between the planarity threshold and the linear case dealt with in Theorem~\ref{lambdathm}),
showing exactly how the genus grows:

\begin{Theorem} \label{sthm}
Let $m=m(n) = \frac{n}{2} + s(n)$,
where $s=s(n)$ satisfies $s>0$ for all $n$
and $n^{2/3} \ll s \ll n$.
Then with high probability
\begin{displaymath}
g(n,m) = (1+o(1)) \frac{8s^{3}}{3n^{2}}.
\end{displaymath}
\end{Theorem}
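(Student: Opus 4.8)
The plan is to reduce the problem to the \emph{kernel} of the giant component and then bound that multigraph's genus from above and below by the same quantity. Write $\varepsilon = \varepsilon(n) := 2s/n$, so $m = \tfrac{(1+\varepsilon)n}{2}$ and $n^{-1/3} \ll \varepsilon \ll 1$; thus $G(n,m)$ lies in the barely supercritical regime with $\varepsilon^{3} n \to \infty$. I will use the following standard facts about this regime (from the literature on the emergence of the giant component): whp the giant component $L_{1}$ has $(1+o(1))\,2\varepsilon n$ vertices and excess $\beta_{1}(L_{1}) = e(L_{1}) - |L_{1}| + 1 = (1+o(1))\tfrac23\varepsilon^{3}n$; whp every other component is a tree or one of only $O(1)$ unicyclic components; and whp the kernel $K$ of $L_{1}$ --- the multigraph obtained from the $2$-core of $L_{1}$ by suppressing all degree-$2$ vertices --- has $N = (1+o(1))\tfrac43\varepsilon^{3}n$ vertices, minimum degree at least $3$, almost all degrees equal to $3$ (the total excess degree $\sum_{v}(d_{K}(v)-3)$ and the numbers of loops and of parallel edges are all $o(N)$), and, conditioned on its degree sequence, is a uniformly random multigraph with that sequence, which I will study via the configuration model. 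Since attaching pendant trees and subdividing edges both leave the genus unchanged, $g(L_{1}) = g(K)$, and the small components contribute only $o(s^{3}/n^{2})$; hence whp $g(n,m) = g(K) + o(s^{3}/n^{2})$. Note $\tfrac23\varepsilon^{3}n = \tfrac{16s^{3}}{3n^{2}}$ and $\tfrac14\cdot\tfrac43\varepsilon^{3}n = \tfrac{8s^{3}}{3n^{2}}$.

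For the upper bound, Euler's formula gives $g(H) \le \beta_{1}(H)/2$ for any connected graph $H$ (a genus-$g$ cellular embedding has $f \ge 1$ faces, and $2-2g = |V(H)|-|E(H)|+f$). Applied to $L_{1}$, this yields whp $g(n,m) \le \tfrac12\beta_{1}(L_{1}) + o(s^{3}/n^{2}) = (1+o(1))\tfrac{8s^{3}}{3n^{2}}$.

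For the lower bound I must show $g(K) \ge (1-o(1))\tfrac14 N = (1-o(1))\tfrac{8s^{3}}{3n^{2}}$. Since $K$ is connected, Euler's formula gives
\[
  g(K) = \tfrac12\bigl(1 + \beta_{1}(K) - f_{\max}(K)\bigr),
\]
where $f_{\max}(K)$ is the maximum number of faces over all cellular embeddings of $K$ (a minimum-genus embedding may be taken cellular). As $\beta_{1}(K) = (1+o(1))\tfrac23\varepsilon^{3}n = (1+o(1))\tfrac{N}{2}$, it suffices to prove that whp \emph{every} cellular embedding of $K$ has only $o(N)$ faces. The point is that an embedding with many faces must use many short face-boundary walks, whereas a random nearly-cubic multigraph has very few short closed substructures. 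Concretely, fix a slowly growing $L = L(N) \to \infty$ with $2^{L} = N^{o(1)}$: the face lengths sum to $2|E(K)| = (1+o(1))3N$, so there are at most $O(N/L) = o(N)$ faces of length exceeding $L$; and the number of faces of length at most $L$ is controlled by a first-moment estimate in the configuration model, the expected number of cycles --- and more generally of the bounded-excess connected subgraphs that a short face boundary must trace out --- of length at most $L$ being $N^{o(1)} = o(N)$, so that only few short faces are possible. Combining the two bounds gives $f_{\max}(K) = o(N)$ whp, hence the lower bound.

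The genuinely hard part is this last step --- showing that a random nearly-cubic multigraph has minimum genus asymptotic to the Euler bound $\tfrac14 N$, equivalently that no embedding has $\Omega(N)$ faces. Bounded average degree is precisely the regime not reached by the earlier results of Archdeacon--Grable and R\"odl--Thomas, so this requires a careful new analysis of cellular embeddings together with first-moment calculations for short structures in the configuration model, as well as care in handling the $o(N)$ loops and parallel edges of $K$ and in making the structural description of the barely supercritical kernel sufficiently quantitative and uniform throughout the range $n^{2/3} \ll s \ll n$.
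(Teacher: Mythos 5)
Your proposal follows the same high-level architecture as the paper's proof --- pass to a core structure of the giant component, apply Euler's formula, and show that the number of faces in any minimum-genus embedding is $o(s^{3}/n^{2})$ by splitting faces into ``short'' and ``long'' and bounding short faces via short cycles --- but it takes a genuinely different technical route to the crucial face bound. The paper works with the $2$-core (not the kernel) and obtains its short-cycle bound by directly importing \L uczak's results on $G(n,m)$ near criticality: Lemma~\ref{Poisson} (a Poisson limit for carefully constrained short cycles in the giant component), Lemma~\ref{Fact8} (no small dense subgraphs), and Lemma~\ref{Fact9} (bounds on leaf neighbourhoods). You instead contract further to the kernel $K$, a nearly-cubic random multigraph on $N = (1+o(1))\frac{32s^{3}}{3n^{2}}$ vertices, and propose to control short cycles there via first-moment estimates in the configuration model, after invoking that $K$ conditioned on its degree sequence is uniform. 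Your route is conceptually cleaner in that it isolates the self-contained question ``what is the minimum genus of a random nearly-cubic multigraph?'' and answers it via a single subgraph-count computation; the paper's route avoids having to establish the (nontrivial, if known) fact that the kernel is a conditionally uniform multigraph, and avoids re-deriving moment bounds by leaning on ready-made results from~\cite{luccyc} and~\cite{luccpt}. Your upper bound on $g(n,m)$ (Euler with $f\geq 1$ applied to the giant component, plus excess $\beta_{1}(L_{1}) = (1+o(1))\frac{16s^{3}}{3n^{2}}$) is exactly the paper's upper bound in disguise, and the numerology ($\frac{N}{4} = \frac{8s^{3}}{3n^{2}}$, $\beta_{1}(K)\sim\frac{N}{2}$) all checks out.

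Two points to be aware of if you wanted to complete this route. First, the passage ``each short face contains a short cycle, and each short cycle bounds at most two faces'' needs to be stated carefully in the multigraph setting (loops and parallel edges are cycles of length $1$ and $2$, and they must be counted; fortunately, as you note, whp there are only $o(N)$ of them, and in fact in the configuration model their expected number is $O(1)$), and one must make the face-boundary-to-cycle reduction precise, exactly as the paper's Lemma~\ref{facelemma} does. Second, your first-moment bound needs to be carried out uniformly across the whole window $n^{2/3}\ll s\ll n$, where the kernel's degree sequence changes (at the lower end the kernel is essentially cubic; towards the upper end the empirical degree distribution shifts); the paper sidesteps this by working with the $2$-core and a ``short'' threshold $an/s$ with $a\to\infty$ slowly, whereas your threshold $L$ with $2^{L}=N^{o(1)}$ lives at the kernel scale. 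Both choices work, but they are different reductions and the bookkeeping is not interchangeable. You have correctly flagged the face bound as the place where the real work happens; that is exactly where the paper spends its effort as well, just with different tools.
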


All these results are summarised in Table~\ref{table},
which gives an exciting picture of how the genus $g=g(n,m)$ behaves as $m$ grows.
In particular,
it is intriguing to see that the ratio of $g$ to $m$ increases from $0$ to $\frac{1}{2}$ until $m$ becomes superlinear in $n$,
after which it then decreases from $\frac{1}{2}$ to $\frac{1}{6}$
(see Section~\ref{discussion} for a discussion of this).

\begin{table} [ht]
\centering
\caption{
A summary of the genus $g:=g(n,m)$ of the random graph $G(n,m)$.
} 
\label{table}
{\renewcommand{\arraystretch}{1.5}
\begin{tabular}{|l||l||l|}
\hline
$m=\Theta \left( n^{2} \right)$ & 
$g = (1+o(1)) \frac{m}{6}$ \ whp &
See \cite{rod} \\
\hline
$n^{1+\frac{1}{j+1}} \ll m \ll n^{1+\frac{1}{j}}$ &
$g = (1+o(1)) \frac{jm}{2(j+2)}$ \ whp &
See \cite{rod} \\
\hline
$m= \Theta \left( n^{1+\frac{1}{j}} \right)$ &
$(1+o(1)) \frac{(j-1)m}{2(j+1)}$ &
See \cite{rod} \\
&
$\leq g \leq (1+o(1)) \frac{jm}{2(j+2)}$ \ whp &
\\
\hline
$n \ll m = n^{1+o(1)}$ &
$(1-o(1)) \frac{m}{2} \leq g \leq \frac{m}{2}$ \ whp &
Theorem~\ref{deltacor} \\
\hline
$m \sim \lambda n$, $\lambda > \frac{1}{2}$ &
$g = (1+o(1)) \mu (\lambda) m$ \ whp, &
Theorem~\ref{lambdathm} \\
&
where $\mu (\lambda) \to 0$ as $\lambda \to \frac{1}{2}$ &
\\
&
and $\mu (\lambda) \to \frac{1}{2}$ as $\lambda \to \infty$ &
\\
\hline
$m = \frac{n}{2}+s$, &
$g = (1+o(1)) \frac{8s^{3}}{3n^{2}}$ \ whp &
Theorem~\ref{sthm} \\
$s>0$ and $n^{2/3} \ll s \ll n$ &
&
\\
\hline
$m-\frac{n}{2} \sim c n^{2/3}$ &
$\lim_{n \to \infty} \mathbb{P} [g=0] = r(c) \in (0,1)$, &
See \cite{lucpitwie} \\
&
where $r(c) \to 1$ as $c \to - \infty$ &
\\
&
and $r(c) \to 0$ as $c \to \infty$ &
\\
\hline
$m < \frac{n}{2} - \omega \left( n^{2/3} \right)$ &
$g=0$ \ whp &
See \cite{lucpitwie} \\
\hline
\end{tabular}}
\end{table}

Finally, we turn our attention to our last main result,
which concerns the fragile genus property.
Here, we take an arbitrary connected graph $H$ with bounded maximum degree,
and a random graph $R$ on the same vertex set,
and we consider the genus $g(G)$ of the graph $G = H \cup R$.
We make an interesting discovery,
finding that $g(G)$ will whp be rather large,
even if $H$ and $R$ are both planar:

\begin{Theorem} \label{fragile}
Let $\Delta$ be a fixed constant,
and let $H=H(n,\Delta)$ be a connected graph with $n$ vertices and maximum degree at most $\Delta$.
Let $k=k(n) \to \infty$ as $n \to \infty$, 
and let $R=R(n,k)$ be a random graph on $V(H)$ consisting of exactly $k$ edges chosen uniformly at random from $\binom{V(H)}{2}$.
Let $G = G(n,\Delta,k) = H \cup R$.
Then with high probability
\begin{displaymath}
g(G) =
\Theta \left( \max \left\{ g(H),k \right\} \right).
\end{displaymath}
\end{Theorem}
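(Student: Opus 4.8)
The plan is to prove the upper and lower bounds on $g(G)$ separately, in each case reducing to the Euler-formula relationship between genus, edges, and vertices, together with structural information about where the $k$ random edges land.

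For the \emph{upper bound} $g(G) = O(\max\{g(H),k\})$, I would use the standard additivity-type estimate for genus under edge addition: adding a single edge to a graph increases its genus by at most $1$ (one can always route the new edge through an extra handle). Hence $g(G) \le g(H) + k$, which is already $O(\max\{g(H),k\})$. This direction requires essentially no randomness and no work beyond quoting this elementary fact about the genus.

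The \emph{lower bound} is the substantive part, and it splits according to which term dominates. The bound $g(G) \ge g(H)$ is immediate from monotonicity of genus under subgraphs (deleting the edges of $R$ recovers $H$), so it remains to show $g(G) = \Omega(k)$. The idea is that, because $H$ has bounded maximum degree $\Delta$, the $k$ random edges of $R$ are with high probability spread out enough to create $\Omega(k)$ vertex-disjoint short cycles, or more precisely to force the presence of a subgraph whose Euler-formula lower bound on genus is $\Omega(k)$. Concretely, I would argue that whp a linear fraction of the edges $e = \{u,v\}$ of $R$ have the property that $u$ and $v$ lie at bounded distance in $H$ (in fact, a positive proportion of random pairs land inside a single connected region; since $H$ is connected with $n$ vertices, one can use the bound on degrees to control how many vertices are within distance $t$ of a given vertex, but connectivity alone already guarantees many edges of $R$ close many short paths). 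Each such edge, together with a short path in $H$, forms a cycle of bounded length; because $H$ has maximum degree $\le \Delta$, with high probability one can extract $\Omega(k)$ of these cycles that are pairwise (near-)disjoint, so their union is a subgraph $F \subseteq G$ with $e(F) - v(F) = \Omega(k)$. Since the genus of any graph $F$ satisfies $g(F) \ge \tfrac{1}{?}\,(e(F) - v(F) + 1)$ up to the number of components via Euler's formula (a bound that becomes $e(F) - 3v(F) + 6 \le 6g(F)$ for simple graphs, hence is only useful when $F$ is dense), I would instead work with a cleverly chosen dense minor or with a count of independent cycles in the cycle space: the first Betti number $\beta(G) = e(G) - v(G) + c(G) \ge k - O(1)$, but $\beta$ alone does not lower-bound genus (planar graphs have arbitrarily large $\beta$). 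So the real mechanism must be that the random edges cannot all be embedded in the plane simultaneously with $H$; the cleanest route is to show that $G$ contains $\Omega(k)$ disjoint copies of $K_5$ or $K_{3,3}$ subdivisions, or—better—that $G$ has a subgraph with minimum degree $\ge 3$ on $\Omega(k)$ vertices whose genus is then $\Omega(k)$ by the edge-density form of Euler's formula.

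The \textbf{main obstacle} is precisely this lower-bound mechanism: a graph can have enormously many extra edges over a tree and still be planar, so I cannot simply count edges; I must genuinely exploit that the edges of $R$ are \emph{random} and that $H$ has \emph{bounded degree}, to conclude that the $k$ new edges create obstructions to planarity at a linear rate that survives in the genus count. I expect the right technical tool is to find, whp, a subgraph $G' \subseteq G$ with $v(G') = O(k)$ vertices and $e(G') \ge v(G') + \Omega(k)$ edges and \emph{minimum degree at least $3$} (obtained by repeatedly contracting degree-$\le 2$ vertices along the $H$-paths joining endpoints of $R$-edges, using bounded degree of $H$ to keep the contraction local and the pieces disjoint); then Euler's formula gives $e(G') \le 3 v(G') - 6 + 6 g(G')$, whence $g(G') = \Omega(k)$ and so $g(G) \ge g(G') = \Omega(k)$. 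Verifying the disjointness and the degree-$3$ condition with high probability via a first/second-moment computation over the choice of $R$ is the part that will require genuine care.
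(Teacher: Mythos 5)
Your upper bound ($g(G)\le g(H)+k$) and the trivial piece of the lower bound ($g(G)\ge g(H)$) match the paper exactly. The substantive issue is your proposed mechanism for showing $g(G)=\Omega(k)$, and there is a genuine gap there.

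You correctly identify the core obstacle — that a large cycle rank $e-v+\kappa$ does not force large genus, since planar graphs can have arbitrarily many independent cycles — but your proposed fix does not overcome it. You suggest contracting down to a subgraph $G'$ on $O(k)$ vertices with $e(G')\ge v(G')+\Omega(k)$ and minimum degree at least $3$, and then reading off $g(G')=\Omega(k)$ from Euler's inequality $e\le 3v-6+6g$. This is false: any $3$-regular planar graph on $v$ vertices has $e=\tfrac{3}{2}v=v+\tfrac{v}{2}$, minimum degree $3$, and genus $0$, so the conditions you impose on $G'$ are simply not strong enough. To get anything out of $e\le 3v-6+6g$ you would need the edge count of $G'$ to exceed $3v(G')$, which your construction does not guarantee (and indeed cannot, since the random-edge budget is only $k$ and $G'$ has $\Theta(k)$ vertices). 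The more elaborate variant you sketch (extracting $\Omega(k)$ disjoint $K_5$- or $K_{3,3}$-subdivisions) would suffice, but you give no indication of how to produce them whp, and doing so directly would be substantially harder than what the paper does.

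The idea the paper actually uses, and which your proposal misses, is to appeal not to Euler's formula on the contracted graph but to the paper's own Theorem~\ref{lambdathm}: a uniform random graph $G(t,\lfloor\lambda t\rfloor)$ with $\lambda>\tfrac12$ has genus $\Theta(t)$ whp, even though its edge density is far below $3$. Concretely, the paper first reduces to $k=O(n)$, then invokes the decomposition of Proposition~\ref{krivnachprop} to partition $V(H)$ (up to a negligible remainder) into $t=\Theta(k)$ disjoint \emph{connected} pieces of size $\Theta(n/k)$, trims each piece to a connected subset $U_i$ of a common size $s$, and contracts each $U_i$ to a single vertex to obtain a minor $\Gamma$ of $G$ on $t$ vertices. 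A stochastic-domination argument shows $\Gamma$ has at least $t$ edges whp, and — this is the crucial point — because the $U_i$ all have the \emph{same} cardinality, the graph formed by the first $t$ edges of $\Gamma$ is a genuinely uniform $G(t,t)$, so Theorem~\ref{lambdathm} applies and gives $g(\Gamma)=\Omega(t)=\Omega(k)$ whp. So the real engine is the supercritical random-graph genus result, not an edge-density bound; without that ingredient (or the $K_5$/$K_{3,3}$-packing alternative you mention but do not carry out), your lower-bound argument does not go through.
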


Note that 
for $\limsup_{n \to \infty} \frac{k}{n} < \frac{1}{2}$,
the restriction on the maximum degree
in Theorem~\ref{fragile} is essential,
since otherwise we could take $H$ to be a star on $n$ vertices
(observe that 
whp the random graph $R$ would consist only of trees and unicyclic components,
and would consequently be outerplanar,
and so the overall graph $G$ would then have genus zero).

\subsection{Techniques and outline of the paper}

Our proofs typically utilise Euler's formula.
Given a graph $G$,
this states that the genus $g(G)$ satisfies
\begin{align*}
g(G) = \frac{1}{2} (e(G)-|G|-f(G)+\kappa(G)+1),
\end{align*}
where 
$e(G)$ is the number of edges of $G$,
$|G|$ is the number of vertices of $G$,
$f(G)$ is the number of faces of $G$ when embedded on a surface of minimal genus
(i.e.~a sphere to which $g(G)$ handles have been attached),
and $\kappa(G)$ is the number of components of $G$.

Consequently, our results often involve establishing new bounds for 
$f(G(n,m))$,
the number of faces of $G(n,m)$
when embedded on a surface of minimal genus.
For instance,
this might be achieved by first bounding the number of short faces through
probabilistic calculations on the number of short cycles,
and then separately bounding the number of larger faces using the fact that the total sum of all face sizes must be $2e(G)$.

We note that a key ingredient here is a new result (Corollary~\ref{faceequiv}) 
relating $f(G(n,m))$ to $f\left(G_{n,p}\right)$ for $p=p(n) = \frac{m}{\binom{n}{2}}$,
thus allowing us to work with the $G_{n,p}$ model when this is more convenient.
It is hoped that this may also prove to be of use to future researchers in this area.

Unfortunately, the number of short cycles may in fact be a gross over-estimate for the number of small faces if there are actually many large faces that consist of a short cycle with large trees rooted on the cycle
(see Figure~\ref{treefig}).

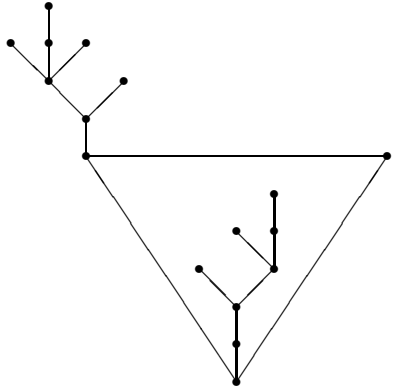
\begin{figure} [ht]
\setlength{\unitlength}{1cm}
\begin{picture}(5,5)(0,-0.5)
\put(1,2.5){\circle*{0.1}}
\put(5,2.5){\circle*{0.1}}
\put(3,-0.5){\circle*{0.1}}

\put(1,3){\circle*{0.1}}
\put(0.5,3.5){\circle*{0.1}}
\put(1.5,3.5){\circle*{0.1}}
\put(0,4){\circle*{0.1}}
\put(0.5,4){\circle*{0.1}}
\put(1,4){\circle*{0.1}}
\put(0.5,4.5){\circle*{0.1}}

\put(3,0){\circle*{0.1}}
\put(3,0.5){\circle*{0.1}}
\put(2.5,1){\circle*{0.1}}
\put(3.5,1){\circle*{0.1}}
\put(3.5,1.5){\circle*{0.1}}
\put(3,1.5){\circle*{0.1}}
\put(3.5,2){\circle*{0.1}}

\put(3,-0.5){\line(2,3){2}}
\put(3,-0.5){\line(-2,3){2}}
\put(1,2.5){\line(1,0){4}}

\put(1,2.5){\line(0,1){0.5}}
\put(0.5,3.5){\line(0,1){1}}
\put(1,3){\line(-1,1){1}}
\put(1,3){\line(1,1){0.5}}
\put(0.5,3.5){\line(1,1){0.5}}

\put(3,-0.5){\line(0,1){1}}
\put(3.5,1){\line(0,1){1}}
\put(3,0.5){\line(-1,1){0.5}}
\put(3,0.5){\line(1,1){0.5}}
\put(3.5,1){\line(-1,1){0.5}}
\end{picture}

\caption{An embedding with two large faces.} \label{treefig}
\end{figure}

Hence,
in order to attain the required level of accuracy,
we sometimes find it better to deal directly with the $2$-core of $G(n,m)$
(see Definition~\ref{2core})
rather than with the entire graph ---
note that this determines the overall genus.

Finally, the proof of Theorem~\ref{fragile} exploits a result from~\cite{krivnach} for decomposing the base graph $H$ into connected pieces of prescribed size.
We construct a particular minor of $G$ 
where each of these pieces is condensed into a vertex
(note that the genus of $G$ is at least the genus of any of its minors),
and we find that we can obtain our result by applying Theorem~\ref{lambdathm} to this minor.

We structure the paper as follows:
in Section~\ref{prelim},
we state the relevant terminology, notation, and key facts;
in Section~\ref{omega},
we begin our investigation of $g(n,m)$ with results for when $m = \omega (n)$,
proving Theorem~\ref{deltacor};
in Section~\ref{lambda},
we deal with the case when $m \sim \lambda n$ for $\lambda > \frac{1}{2}$,
proving Theorem~\ref{lambdathm};
in Section~\ref{s},
we fill in the remaining gap by determining the behaviour in the region $m = \frac{n}{2} + s$ for $n^{2/3} \ll s \ll n$,
proving Theorem~\ref{sthm};
in Section~\ref{contiguity},
we use our results to examine the contiguity
(see Definition~\ref{contiguitydefn})
of $G(n)$ and $G(n,m)$ with random graph models of given genus;
in Section~\ref{fragilesection},
we turn our attention to the fragile genus property,
proving Theorem~\ref{fragile};
and then finally,
in Section~\ref{discussion},
we discuss our results and the remaining open problems.

\section{Preliminaries} \label{prelim}

In this section,
we shall firstly (in Subsection~\ref{notation})
provide details of the notation and definitions that will be used throughout the paper,
and then (in Subsection~\ref{facts})
we shall present three important results that will be of great use to us.

\subsection{Notation and definitions} \label{notation}

Let us first note that
we shall always take $n$ and $m=m(n)$
to be integers satisfying $n>0$
and $m \geq 0$,
even if this is not always explicitly stated.

We start with the definitions of the standard random graph models:

\begin{Definition}
We shall let $G(n,m)$
denote a graph taken uniformly at random from
the set of all labelled graphs 
on the vertex set $[n] := \{1,2, \ldots, n\}$
with exactly $m=m(n)$ edges.

We shall let $G_{n,p}$ denote a graph on $[n]$
where every edge occurs independently at random with probability $p=p(n)$,
and we shall use $G(n)$ to denote $G_{n, \frac{1}{2}}$
(i.e.~a graph taken uniformly at random from the set of all labelled graphs on $[n]$).
\end{Definition}

Next, we state the notation to be used for various key characteristics:

\begin{Definition}
Given a graph $G$,
we shall use 
$|G|$ to denote the number of vertices of $G$,
$e(G)$ to denote the number of edges of $G$,
$g(G)$ to denote the genus of $G$,
$\kappa(G)$ to denote the number of components of $G$,
and $f(G)$ to denote the number of faces of $G$
when embedded on a surface of genus $g(G)$.

We also define random variables $g(n,m) := g(G(n,m))$, $\kappa(n,m) := \kappa(G(n,m))$, and $f(n,m) := f(G(n,m))$.

Given a particular embedding of a graph,
we shall use the \emph{length} of a face 
to mean the number of edges with a side in the face,
counting an edge twice if both sides are in the face
(for example,
the embedding shown in Figure~\ref{fig} has one face of length six and one face of length four).

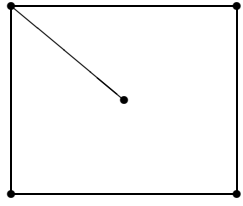
\begin{figure} [ht]
\setlength{\unitlength}{1cm}
\begin{picture}(3,2.5)(0,0)
\put(0,0){\line(1,0){3}}
\put(0,2.5){\line(0,-1){2.5}}
\put(3,2.5){\line(0,-1){2.5}}
\put(0,2.5){\line(1,0){3}}
\put(0,2.5){\line(6,-5){1.5}}
\put(0,0){\circle*{0.1}}
\put(3,0){\circle*{0.1}}
\put(1.5,1.25){\circle*{0.1}}
\put(0,2.5){\circle*{0.1}}
\put(3,2.5){\circle*{0.1}}
\end{picture}

\caption{An embedding with faces of length six and four.} \label{fig}
\end{figure}
\end{Definition}

We now also provide details of our order notation:

\begin{Definition} \label{orderdefn}
Given non-negative functions $a(n)$ and $b(n)$, 
we shall use the following notation:
\begin{itemize}
\item
$a(n) = \Omega (b(n))$ 
means there exists a constant $c>0$
such that $a(n) \geq c b(n)$ for all large $n$;
\item $a(n) = O(b(n))$ 
means there exists a constant $C$
such that $a(n) \leq C b(n)$ for all large $n$;
\item $a(n) = \Theta(b(n))$ 
means $a(n) = \Omega (b(n))$ and $a(n) = O(b(n))$;
\item $a(n) = \omega (b(n))$ or $a(n) \gg b(n)$ 
means $\frac{a(n)}{b(n)} \to \infty$ as $n \to \infty$;
\item $a(n) = o(b(n))$ or $a(n) \ll b(n)$ 
means $\frac{a(n)}{b(n)} \to 0$ as $n \to \infty$;
\item $a(n) \sim b(n)$ 
means $a(n) = (1+o(1))b(n)$.
\end{itemize}

We shall say that a random event $X_{n}$ happens
\emph{with high probability (whp)}
if
$\mathbb{P}(X_{n}) \to 1$ as $n \to \infty$. 
Given a non-negative random variable $a(n)$ 
and a non-negative function $b(n)$,
we shall use the following notation:
\begin{itemize}
\item $a(n) = \Omega (b(n))$ whp 
means there exists a constant $c>0$
such that $a(n) \geq c b(n)$ whp;
\item $a(n) = O(b(n))$ whp 
means there exists a constant $C$
such that $a(n) \leq C b(n)$ whp;
\item $a(n) = \Theta(b(n))$ whp 
means $a(n) = \Omega (b(n))$ whp and $a(n) = O(b(n))$ whp;
\item $a(n) = \omega (b(n))$ whp or $a(n) \gg b(n)$ whp 
means that,
given any constant $K$,
we have
$\frac{a(n)}{b(n)} > K$ whp;
\item $a(n) = o(b(n))$ whp or $a(n) \ll b(n)$ whp 
means that,
given any constant $\epsilon > 0$,
we have
$\frac{a(n)}{b(n)} < \epsilon$ whp;
\item $a(n) \sim b(n)$ whp 
means $a(n) = (1+o(1))b(n)$ whp.
\end{itemize}
\end{Definition}

Note that we shall always take all asymptotics to be as $n \to \infty$,
even if this is not always explicitly stated.

\subsection{Key facts} \label{facts}

In this subsection,
we shall formally present two important well-known results,
together with a new corollary.
The first is Euler's formula,
which we have already seen:

\begin{Theorem} [Euler's formula]
\label{euler}
Let $G$ be a graph.
Then
\begin{displaymath}
g(G) = \frac{1}{2} (e(G) - |G| - f(G) + \kappa(G) + 1).
\end{displaymath}
\end{Theorem}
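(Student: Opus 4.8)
\textbf{Proof proposal for Theorem~\ref{euler} (Euler's formula).}

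The plan is to derive the stated formula from the classical Euler--Poincar\'e relation for a connected graph cellularly embedded on an orientable surface, and then patch together the contributions of the individual components. First I would recall the standard fact that if a connected graph $G'$ is embedded on the orientable surface of genus $h$ in such a way that every face is a disc (a \emph{$2$-cell embedding}), then $|G'| - e(G') + f(G') = 2 - 2h$. A minimal-genus embedding of a connected graph is always a $2$-cell embedding --- if some face were not a disc, one could cut along a noncontractible curve inside it and cap off, strictly reducing the genus --- so taking $h = g(G')$ gives $|G'| - e(G') + f(G') = 2 - 2g(G')$, i.e. $g(G') = \tfrac12\bigl(e(G') - |G'| - f(G') + 2\bigr)$, which is exactly the claimed identity in the case $\kappa(G') = 1$.

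Next I would handle the general case by induction on the number of components $\kappa(G)$, or equivalently by the known additivity of the genus over components: $g(G) = \sum_{i} g(G_i)$, where $G_1,\dots,G_{\kappa(G)}$ are the connected components of $G$. For the face count, when one forms a minimal embedding of $G$ by placing minimal embeddings of the $G_i$ into $\kappa(G)$ separate regions of a common surface, the separate outer faces all merge into a single face, so $f(G) = \sum_i f(G_i) - (\kappa(G) - 1)$. Summing the connected-case identity $g(G_i) = \tfrac12\bigl(e(G_i) - |G_i| - f(G_i) + 2\bigr)$ over $i$ and substituting $\sum_i e(G_i) = e(G)$, $\sum_i |G_i| = |G|$, and $\sum_i f(G_i) = f(G) + \kappa(G) - 1$ then yields
\begin{displaymath}
g(G) = \frac12\left( e(G) - |G| - f(G) - (\kappa(G) - 1) + 2\kappa(G) \right) = \frac12\bigl( e(G) - |G| - f(G) + \kappa(G) + 1 \bigr),
\end{displaymath}
as required. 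One should also note the trivial degenerate cases (an isolated vertex, or the empty graph) are consistent with the formula, and that a graph with no edges embedded in the sphere has a single face.

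The main obstacle here is not really a computational one but a conceptual justification: namely, establishing carefully that a minimal-genus embedding is necessarily a $2$-cell embedding, and that the genus is additive over connected components (equivalently, that disjoint parts of the graph can be embedded in disjoint discs of a single surface without interference). Both facts are standard in topological graph theory (see, e.g., Mohar and Thomassen), so in the write-up I would cite them rather than reprove them, and the remainder is the elementary bookkeeping with $e$, $|G|$, $f$, and $\kappa$ sketched above.
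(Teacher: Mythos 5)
The paper does not actually prove Theorem~\ref{euler}; it records it as a standard fact (it is the classical Euler--Poincar\'e formula adapted to possibly disconnected graphs), so there is no ``paper's own proof'' to compare against. Your proposal is a correct and reasonable derivation: you reduce to the connected case via genus additivity over components, invoke the Euler--Poincar\'e relation $|G'|-e(G')+f(G')=2-2h$ for a $2$-cell embedding, use the fact (Youngs' theorem) that a minimal-genus embedding of a connected graph on an orientable surface is a $2$-cell embedding, and then do the face-count bookkeeping $f(G)=\sum_i f(G_i)-(\kappa(G)-1)$ coming from merging one face from each component when the pieces are placed on the connected-sum surface. The arithmetic then gives exactly the claimed identity.

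Two small points worth flagging if you were to write this up fully. First, the ``cut along a noncontractible curve and cap off'' sketch for the $2$-cell property needs a little care (one must handle separating versus nonseparating curves differently, and for a separating curve one discards the piece not containing the graph rather than lowering the genus directly); this is precisely the content of Youngs' theorem and it is cleanest to cite it, as you indicate. Second, the quantity $f(G)$ for disconnected $G$ is really ``the number of faces of some fixed minimal-genus embedding,'' and your formula $f(G)=\sum_i f(G_i)-(\kappa(G)-1)$ identifies the value realized by the natural connected-sum embedding; one should note that the Euler characteristic argument shows this value is in fact forced for any embedding on the genus-$g(G)$ surface in which the faces have the correct total Euler characteristic, which is the sense in which the paper's $f(G)$ is well defined. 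Both caveats are at the same level of informality as the paper's own treatment, so the proposal is fine as a blind reconstruction of the result.
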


For the second key result,
we first require the following definition:

\begin{Definition}
We say that a property is \emph{monotone increasing}
if whenever an edge is added to a graph with the property,
then the resulting graph also has the property.

Similarly, we say that a property is \emph{monotone decreasing}
if whenever an edge is deleted from a graph with the property,
then the resulting graph also has the property.

We say that a property is \emph{monotone} if it is either monotone increasing or monotone decreasing.
\end{Definition}

We may now state the aforementioned second crucial result:

\begin{Theorem} [see, for example, Proposition 1.15 of~\cite{janluc}] \label{equiv}
Given $m = m(n)$,
let $p = p(n) = \frac{m}{\binom{n}{2}}$.
Then if a monotone property holds whp for $G_{n,p}$,
it also holds whp for $G(n,m)$.
\end{Theorem}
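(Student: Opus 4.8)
\textbf{Proof proposal for Theorem~\ref{equiv} (the monotone-property transfer result).}
The plan is to compare the two models by conditioning on the number of edges. Recall that $G_{n,p}$, conditioned on having exactly $j$ edges, is distributed precisely as $G(n,j)$; this is immediate since every labelled graph with $j$ edges receives the same weight $p^{j}(1-p)^{\binom{n}{2}-j}$ under $G_{n,p}$. Hence, writing $e(G_{n,p})$ for the (random) number of edges, for any graph property $\mathcal{P}$ we have
\begin{displaymath}
\mathbb{P}[G_{n,p} \in \mathcal{P}] = \sum_{j=0}^{\binom{n}{2}} \mathbb{P}[e(G_{n,p}) = j]\, \mathbb{P}[G(n,j) \in \mathcal{P}].
\end{displaymath}
So the first step is just to record this decomposition together with the conditional-distribution fact.

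The second step is to exploit monotonicity to get a one-sided comparison between $\mathbb{P}[G(n,j)\in\mathcal P]$ and $\mathbb{P}[G(n,m)\in\mathcal P]$. Suppose $\mathcal{P}$ is monotone increasing. Using a coupling in which $G(n,j)$ is obtained from $G(n,j')$ (for $j<j'$) by deleting a uniformly random set of $j'-j$ edges — equivalently, $G(n,j')\supseteq G(n,j)$ in a natural coupling — one sees that $j\mapsto \mathbb{P}[G(n,j)\in\mathcal P]$ is non-decreasing in $j$. (For monotone decreasing $\mathcal{P}$ the same argument gives that this function is non-increasing, and one runs the rest of the argument symmetrically; alternatively one passes to the complementary property. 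So it suffices to treat the increasing case.) Consequently, for any threshold $j_{0}\le m$,
\begin{displaymath}
\mathbb{P}[G_{n,p}\in\mathcal P] \ \le\ \mathbb{P}[e(G_{n,p}) \ge j_{0}]\cdot 1 + \mathbb{P}[e(G_{n,p}) < j_{0}]\cdot \mathbb{P}[G(n,j_{0}-1)\in\mathcal P] \ \le\ \mathbb{P}[G(n,j_{0})\in\mathcal P] + \mathbb{P}[e(G_{n,p}) < j_{0}],
\end{displaymath}
and more usefully, taking $j_{0}=m$ and splitting the sum at $j=m$,
\begin{displaymath}
\mathbb{P}[G_{n,p}\in\mathcal P] \ \ge\ \sum_{j\ge m}\mathbb{P}[e(G_{n,p})=j]\,\mathbb{P}[G(n,j)\in\mathcal P] \ \ge\ \mathbb{P}[G(n,m)\in\mathcal P]\cdot \mathbb{P}[e(G_{n,p})\ge m].
\end{displaymath}
The second step is therefore to write down these monotonicity-coupling inequalities carefully, so that $\mathbb{P}[G(n,m)\in\mathcal P]$ is bounded in terms of $\mathbb{P}[G_{n,p}\in\mathcal P]$ and a tail probability of $e(G_{n,p})$.

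The third step is the probabilistic estimate that makes the tail term harmless. Since $e(G_{n,p})\sim \mathrm{Bin}\!\left(\binom{n}{2},p\right)$ with $p=m/\binom{n}{2}$, its mean is exactly $m$, so $\mathbb{P}[e(G_{n,p})\ge m]$ is bounded below by a constant (indeed it tends to $1/2$ as the variable is asymptotically normal when $\mathrm{Var}=m(1-p)\to\infty$, and is trivially $\ge$ a positive constant in any case). Rearranging the last displayed inequality gives
\begin{displaymath}
\mathbb{P}[G(n,m)\in\mathcal P] \ \le\ \frac{\mathbb{P}[G_{n,p}\in\mathcal P]}{\mathbb{P}[e(G_{n,p})\ge m]}.
\end{displaymath}
Since we are assuming $\mathcal P$ fails whp for $G(n,m)$ would be the contrapositive target — rather, assuming $\mathbb{P}[G_{n,p}\in\mathcal P]\to 1$, the right-hand side still tends to $1$ provided the denominator is bounded away from $0$; but to get the clean conclusion we instead apply the argument to the complement. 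Concretely: if $G_{n,p}\in\mathcal P$ whp then $\mathbb{P}[G_{n,p}\notin\mathcal P]\to 0$, and since $\mathcal P^{c}$ is monotone decreasing, the symmetric version of the step-two inequality yields $\mathbb{P}[G(n,m)\notin\mathcal P]\le \mathbb{P}[G_{n,p}\notin\mathcal P]/\mathbb{P}[e(G_{n,p})\le m]\to 0$, as $\mathbb{P}[e(G_{n,p})\le m]$ is likewise bounded below by a positive constant. Hence $G(n,m)\in\mathcal P$ whp.

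I expect the only genuinely delicate point to be the binomial tail estimate at the median — one must be slightly careful because $\mathbb{P}[\mathrm{Bin}(N,p)\ge Np]$ need not equal $1/2$ exactly, but it is a standard fact that it is bounded below by a universal positive constant (e.g.\ $\ge 1/4$) for all $N,p$, or one can invoke asymptotic normality since $Np(1-p)=m(1-p)\to\infty$ in all regimes of interest; everything else (the conditional-distribution identity and the monotone coupling) is routine. I would present the conditioning identity and the coupling monotonicity as the two structural ingredients, then dispatch the tail bound in a line.
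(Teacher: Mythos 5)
The paper does not prove this result; it cites it as known (Proposition 1.15 of Janson, \L uczak, and Ruci\'nski). Your argument is the standard proof of that transfer principle, and the steps you actually use are all correct: conditioning on $e(G_{n,p})$ to write $\mathbb{P}[G_{n,p}\in\mathcal P]$ as a mixture of $\mathbb{P}[G(n,j)\in\mathcal P]$, the nested-coupling monotonicity of $j\mapsto\mathbb{P}[G(n,j)\in\mathcal P]$, and the split of the sum at $j=m$ applied to the complementary (decreasing) property.

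Two small remarks. The binomial tail bound is cleaner than you suggest: since $p=m/\binom{n}{2}$ the mean $\binom{n}{2}p=m$ is an integer, and an integer mean of a binomial distribution is also a median, so $\mathbb{P}[e(G_{n,p})\le m]\ge\tfrac12$ and $\mathbb{P}[e(G_{n,p})\ge m]\ge\tfrac12$ hold exactly for every $n$ --- no normal approximation and no weaker universal constant are needed. Also, the first display involving $j_0$ has a misstep in its last inequality (the tail that should survive is $\mathbb{P}[e(G_{n,p})\ge j_0]$, not $\mathbb{P}[e(G_{n,p})<j_0]$); but that display is a detour you do not use, and the one beginning ``more usefully,'' on which the proof actually rests, is correct.
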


Note that, for any function $x=x(n)$,
the property that $g(G) \leq x$ is monotone,
as is the property that $g(G) \geq x$.

Unfortunately, the same cannot be said if we replace $g(G)$ with $f(G)$,
the number of faces of $G$ when embedded on a surface of minimal genus.
For instance, let $C_{5}^{+}$ denote the graph formed be adding one edge to $C_{5}$,
let $K_{5}^{-}$ denote the graph formed be removing one edge from $K_{5}$,
and note that we have
$f\left(C_{5}\right)=2$, $f\left(C_{5}^{+}\right)=3$, $f\left(K_{5}^{-}\right)=6$, and $f\left(K_{5}\right)=5$
(observe that the first three graphs are planar, while $K_{5}$ has genus one).

Hence, adding an edge can actually increase or decrease (or have no impact on)
$f(G)$
(to be precise,
adding an edge between two components will leave $f(G)$ unchanged,
and adding an edge within a component will result in $f(G)$ either increasing by $1$ or decreasing by $1$,
depending on whether the genus stays the same or increases).

However,
the function $f(G)-e(G)$ is certainly monotone decreasing
(one way to see this is to note that Euler's formula gives
$f(G)-e(G) = \kappa(G)+1-|G|-2g(G)$,
and $\kappa(G)$ and $g(G)$ are clearly monotone decreasing and monotone increasing, respectively).
Using this, we may in fact still apply Theorem~\ref{equiv} to derive a useful new equivalence result for the number of faces:

\begin{Corollary} \label{faceequiv}
Let $m=m(n) \to \infty$ as $n \to \infty$,
let $p=p(n)=\frac{m}{\binom{n}{2}}$,
and suppose $x=x(n)$ is a function such that
$f\left(G_{n,p}\right) \leq x$ whp.
Then
\begin{displaymath}
f(n,m) \leq x + o(m) \quad \textrm{whp.}
\end{displaymath}
\end{Corollary}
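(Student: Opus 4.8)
The plan is to transfer the bound on $f(G_{n,p})$ to $f(n,m)$ by exploiting the fact, noted just above the statement, that the function $f(G)-e(G)$ is monotone decreasing, and then applying Theorem~\ref{equiv} in the form in which a monotone-decreasing property passing whp to $G(n,m)$. Concretely, fix the given function $x=x(n)$ with $f(G_{n,p})\le x$ whp. Since $f(G_{n,p})-e(G_{n,p})$ is monotone decreasing, the event ``$f(G)-e(G)\le x-e(G)$'' is not directly a fixed monotone property (the threshold depends on $e(G)$), so the first step is to pin down $e(G_{n,p})$: with $p=m/\binom{n}{2}$ we have $\mathbb{E}[e(G_{n,p})]=m$, and by Chernoff $e(G_{n,p})=m+O(\sqrt{m\log m})=m+o(m)$ whp (using $m\to\infty$). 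Hence whp $f(G_{n,p})-e(G_{n,p})\le x-m+o(m)$, and so for a suitable function $y=y(n)=x-m+o(m)$ the monotone-decreasing property $\{f(G)-e(G)\le y\}$ holds whp for $G_{n,p}$.

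Next I would apply Theorem~\ref{equiv}: a monotone property (here monotone decreasing) holding whp for $G_{n,p}$ holds whp for $G(n,m)$. Therefore whp $f(n,m)-e(n,m)\le y = x-m+o(m)$. But $G(n,m)$ has exactly $m$ edges, i.e.\ $e(n,m)=m$ deterministically, so whp $f(n,m)\le x-m+o(m)+m = x+o(m)$, which is exactly the claimed bound.

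The one point that needs a little care — and the main (mild) obstacle — is that Theorem~\ref{equiv} as stated is about a fixed property, whereas the threshold $y(n)$ was obtained only on a high-probability event for $G_{n,p}$; I would handle this by choosing $y(n)$ to be a deterministic function dominating $x-m$ plus a slack term $C\sqrt{m\log m}$ large enough that $\{f(G_{n,p})-e(G_{n,p})\le y(n)\}$ has probability tending to $1$, so that the hypothesis of Theorem~\ref{equiv} is genuinely met for this fixed monotone-decreasing property. The slack term is $o(m)$ because $m\to\infty$, so it is absorbed into the final $o(m)$. Everything else is bookkeeping with Euler's formula and the trivial observation $e(n,m)=m$.
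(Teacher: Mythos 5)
Your proposal is correct and follows essentially the same route as the paper: establish concentration of $e(G_{n,p})$ around $m$, convert the hypothesis $f(G_{n,p})\le x$ whp into a whp bound on the monotone quantity $f(G)-e(G)$ with a deterministic threshold, transfer this to $G(n,m)$ via Theorem~\ref{equiv}, and use $e(n,m)=m$ to conclude. One small terminology slip: the property $\{f(G)-e(G)\le y\}$ is monotone \emph{increasing} (since $f(G)-e(G)$ is a monotone decreasing function of $G$), not monotone decreasing as you write, but this is harmless because Theorem~\ref{equiv} applies to any monotone property.
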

\begin{proof}
We are required to show that,
given any constant $\epsilon > 0$,
we have $f(n,m) < x + \epsilon m$ whp.

Note that $e\left(G_{n,p}\right)$ has variance $\binom{n}{2}p(1-p) \leq m$,
and hence has standard deviation at most $m^{1/2}$,
which is $o(m)$ since $m \to \infty$.
Thus, since $e\left(G_{n,p}\right)$ has expectation exactly $\binom{n}{2}p=m$,
it follows that,
given any constant $\epsilon > 0$,
we have 
$e\left(G_{n,p}\right) > (1-\epsilon)m$ whp.
Therefore, 
since $f\left(G_{n,p}\right) \leq x$ whp,
we then have 
$f\left(G_{n,p}\right) - e\left(G_{n,p}\right) < x-(1-\epsilon)m$ whp.

Now recall our observation that $f(G)-e(G)$ is a monotone decreasing function,
from which it follows that the property that a graph satisfies 
$f(G)-e(G) < x-(1-\epsilon)m$ is monotone increasing.
Hence, we may apply Theorem~\ref{equiv},
thus obtaining $f(n,m)-m < x-(1-\epsilon)m$ whp,
i.e.~$f(n,m) < x + \epsilon m$ whp,
as desired.
\end{proof}

We note in passing that the $o(m)$ term in the statement of Corollary~\ref{faceequiv} can actually be reduced to $o \left( m^{\frac{1}{2}+\delta} \right)$ for any $\delta > 0$
(by the same proof).
However, we will not require such accuracy in this paper.

\section{$m=\omega(n)$: proof of Theorem~\ref{deltacor}} \label{omega}

We now come to our first main section,
where our focus is to produce a full account of $g(n,m)$ for the case $m \gg n$.
We shall start by stating (in Theorem~\ref{rodthm} and Theorem~\ref{rodcor})
the previously known results for this region,
before then completing the picture with a proof of Theorem~\ref{deltacor}.

The proof will employ Euler's formula,
and will hence involve obtaining results on the number of faces of $G(n,m)$.
The main work here will be done in Lemma~\ref{facelemma},
where we shall utilise Corollary~\ref{faceequiv} 
to allow us to work with the analogous $G_{n,p}$ model,
and then use probabilistic arguments
to bound the number of short cycles,
and hence the number of short faces,
and then the total number of faces.

Let us begin with the aforementioned two existing results from~\cite{rod},
which have been 
rephrased for $G(n,m)$ by applying Theorem~\ref{equiv}:

\begin{Theorem} [rephrased from Theorem 1.2 of \cite{rod}] \label{rodthm}
Let $m=m(n)$ satisfy 
$n^{1 + \frac{1}{j+1}} \ll m \ll n^{1 + \frac{1}{j}}$
for some fixed $j \in \mathbb{N}$.
Then
\begin{displaymath}
g(n,m) = (1+o(1)) \frac{jm}{2(j+2)} \quad \textrm{whp.}
\end{displaymath}
\end{Theorem}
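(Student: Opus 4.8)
The plan is to deduce this from the corresponding result of R\"odl and Thomas for the binomial model and then transfer it via Theorem~\ref{equiv}. Put $p = p(n) = m/\binom{n}{2}$, so that $np = (1+o(1))\,2m/n$ and the hypothesis $n^{1+\frac{1}{j+1}} \ll m \ll n^{1+\frac{1}{j}}$ becomes $n^{\frac{1}{j+1}} \ll np \ll n^{\frac{1}{j}}$, which is the range covered by Theorem~1.2 of~\cite{rod}; that result gives $g\left(G_{n,p}\right) = (1+o(1))\,\frac{jm}{2(j+2)}$ whp, where $m = \binom{n}{2}p$. For each fixed $\epsilon > 0$, the property $g(G) \le (1+\epsilon)\frac{jm}{2(j+2)}$ is monotone decreasing and the property $g(G) \ge (1-\epsilon)\frac{jm}{2(j+2)}$ is monotone increasing (as noted after Theorem~\ref{equiv}), so Theorem~\ref{equiv} carries each of them from $G_{n,p}$ to $G(n,m)$; letting $\epsilon \to 0$ along a sequence then yields $g(n,m) = (1+o(1))\frac{jm}{2(j+2)}$ whp.

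It is worth recalling why the $G_{n,p}$ statement of~\cite{rod} holds, which is where the real work lies. Since $np \gg n^{1/(j+1)} \gg \log n$, the graph $G_{n,p}$ is whp connected (indeed $2$-connected), so Euler's formula reads $g\left(G_{n,p}\right) = \tfrac12\bigl(e - n - f + 2\bigr)$ with $e = e\left(G_{n,p}\right) \sim m \gg n$ whp, and everything reduces to showing $f\left(G_{n,p}\right) = (1+o(1))\tfrac{2m}{j+2}$ whp. The length $j+2$ is singled out because the expected number of cycles of length $\ell$ is $\sim (np)^{\ell}/(2\ell)$, and in our range $(np)^{\ell}/m = \Theta\bigl((np)^{\ell-1}/n\bigr)$, which is $o(1)$ for $\ell \le j+1$ (as $(np)^{j} \ll n$) but $\omega(1)$ for $\ell = j+2$ (as $(np)^{j+1} \gg n$): short cycles are scarce, while $(j+2)$-cycles are abundant.

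The lower bound $g\left(G_{n,p}\right) \ge (1-o(1))\tfrac{jm}{2(j+2)}$, i.e.\ $f\left(G_{n,p}\right) \le (1+o(1))\tfrac{2m}{j+2}$, is the easier half: in a minimum-genus (hence $2$-cell) embedding of the $2$-connected graph $G_{n,p}$ every face boundary is a cycle, so the number of faces of length at most $j+1$ is at most twice the number of cycles of length at most $j+1$, which by the first-moment estimate above and Markov's inequality is $o(m)$ whp; all remaining faces have length $\ge j+2$, and since the face lengths sum to $2e \sim 2m$ there are at most $(1+o(1))\tfrac{2m}{j+2}$ of them. The matching upper bound $g\left(G_{n,p}\right) \le (1+o(1))\tfrac{jm}{2(j+2)}$, i.e.\ the existence of an embedding in which almost every face has length exactly $j+2$, is the hard direction and is the core construction of~\cite{rod}: morally, one deletes an $o(m)$-set of edges hitting every cycle of length $\le j+1$, obtaining a graph of girth $\ge j+2$ whose radius-$j$ balls still have $o(n)$ vertices and so are essentially tree-like, one lays this graph out so that almost every face is a $(j+2)$-gon, and then reinserts the deleted edges, each of which changes $f$ by at most $1$. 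The main obstacle is to make the middle step precise: one must exhibit a rotation system in which the plentiful $(j+2)$-cycles actually bound distinct faces and do not overlap so badly that the face count falls below $\sim\tfrac{2m}{j+2}$. It is for this step, rather than the routine Euler/first-moment bookkeeping, that we appeal directly to~\cite{rod}.
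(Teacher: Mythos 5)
Your argument is exactly what the paper does: Theorem~\ref{rodthm} is simply Theorem~1.2 of~\cite{rod} for $G_{n,p}$ with $p=m/\binom{n}{2}$, transferred to $G(n,m)$ via Theorem~\ref{equiv} using the monotonicity of the properties $g(G)\le(1+\epsilon)\frac{jm}{2(j+2)}$ and $g(G)\ge(1-\epsilon)\frac{jm}{2(j+2)}$. Your additional sketch of the $G_{n,p}$ proof is fine as exposition, but the paper, like you, leaves that entirely to~\cite{rod}, so the two proofs coincide.
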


\begin{Theorem} [rephrased from Theorem 1.4 of \cite{rod}] \label{rodcor}
(i) Let $m=m(n) = \Theta \left(n^{1 + \frac{1}{j}}\right)$ for some fixed $j \in \mathbb{N}_{>1}$.
Then
\begin{displaymath}
(1+o(1)) \frac{(j-1)m}{2(j+1)} \leq g(n,m) \leq (1+o(1)) \frac{jm}{2(j+2)} \quad \textrm{whp.}
\end{displaymath}
(ii) Let $m=m(n) = \Theta \left(n^{2}\right)$.
Then
\begin{displaymath}
g(n,m) = (1+o(1)) \frac{m}{6} \quad \textrm{whp.} 
\end{displaymath} 
\end{Theorem}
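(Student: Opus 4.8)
The plan is to obtain Theorem~\ref{rodcor} directly from its $G_{n,p}$ analogue, namely Theorem~1.4 of~\cite{rod}, by transferring to $G(n,m)$ via Theorem~\ref{equiv} --- exactly as Theorem~\ref{rodthm} was obtained from Theorem~1.2 of~\cite{rod}. In its original form, that result determines the genus of $G_{n,p}$ as a fixed asymptotic multiple of its number of edges: in the range $np = \Theta(n^{1/j})$ it traps $g(G_{n,p})$ between $(1+o(1))\frac{j-1}{2(j+1)}e(G_{n,p})$ and $(1+o(1))\frac{j}{2(j+2)}e(G_{n,p})$ whp, and for $p$ bounded away from $0$ it gives $g(G_{n,p}) = (1+o(1))\frac{1}{6}e(G_{n,p})$ whp.

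First I would match up the two regimes. Setting $p = p(n) = m/\binom{n}{2}$, the hypothesis $m = \Theta(n^{1+1/j})$ is equivalent to $p = \Theta(n^{1/j-1})$, i.e.~$np = \Theta(n^{1/j})$, while $m = \Theta(n^2)$ is equivalent to $p = \Theta(1)$; so cases (i) and (ii) of Theorem~\ref{rodcor} correspond precisely to the two regimes above. Next, as in the proof of Corollary~\ref{faceequiv}, $e(G_{n,p})$ has mean $\binom{n}{2}p = m$ and standard deviation at most $m^{1/2} = o(m)$, so $e(G_{n,p}) = (1+o(1))m$ whp, where $m = m(n)$ is now treated as a fixed function. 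Substituting this, for every fixed $\epsilon > 0$ we have whp for $G_{n,p}$ that
\[
(1-\epsilon)\frac{j-1}{2(j+1)}\,m \;\le\; g(G_{n,p}) \;\le\; (1+\epsilon)\frac{j}{2(j+2)}\,m
\]
in case (i), and the two-sided inequality with $\frac{1}{6}$ in place of both fractions in case (ii).

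Finally I would apply Theorem~\ref{equiv}. As noted in the text, for any fixed function $x = x(n)$ both $\{g(G)\le x\}$ and $\{g(G)\ge x\}$ are monotone properties; hence, for each fixed $\epsilon$, the two bounds in the display (with $x$ the appropriate multiple of the fixed function $m$) are monotone properties holding whp for $G_{n,p}$, and so by Theorem~\ref{equiv} they hold whp for $G(n,m)$. Taking the lower and upper bounds together and letting $\epsilon \to 0$ yields the sandwich of part~(i) and the asymptotic equality $g(n,m) = (1+o(1))\frac{m}{6}$ of part~(ii). I do not expect any genuine obstacle: the only care needed is in checking that the $p$-range of Theorem~1.4 of~\cite{rod} really does cover $p = \Theta(n^{1/j-1})$ and $p = \Theta(1)$, and in replacing ``a fixed fraction of $e(G_{n,p})$'' by ``the same fraction of the deterministic quantity $m$'', which is immediate from the concentration of $e(G_{n,p})$ already exploited in Corollary~\ref{faceequiv}. (A self-contained proof via Euler's formula --- bounding $f(n,m)$ above and below by face-counting, as done elsewhere in this paper --- is possible in principle, but it would essentially re-derive the results of~\cite{rod} and is unnecessary here.)
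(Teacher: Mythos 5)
Your proposal is correct and matches the paper's own treatment: Theorem~\ref{rodcor} is simply Theorem~1.4 of~\cite{rod} ``rephrased for $G(n,m)$ by applying Theorem~\ref{equiv}'', i.e.\ exactly the transfer you describe, using the monotonicity of the properties $g(G)\le x$ and $g(G)\ge x$ together with the concentration of $e\left(G_{n,p}\right)$ around $m$. No further argument is given in the paper, so there is nothing to add.
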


Note that we are left with a gap for the region $n \ll m = n^{1+o(1)}$,
which we shall fill with Theorem~\ref{deltacor}.
As mentioned,
the proof will require us to first obtain bounds on $f(n,m)$:

\begin{Lemma} \label{facelemma}
Let $m=m(n)$ satisfy both $m \to \infty$ as $n \to \infty$ 
and $m \ll n^{1+ \frac{1}{j}}$ for some fixed $j \in \mathbb{N}$.
Then 
\begin{displaymath}
f(n,m) \leq (1+o(1)) \frac{2}{j+2} m \quad \textrm{whp.}
\end{displaymath}
\end{Lemma}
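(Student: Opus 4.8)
The plan is to use Corollary~\ref{faceequiv} to reduce the problem to bounding $f\left(G_{n,p}\right)$ for $p=p(n)=\frac{m}{\binom{n}{2}}$, and then to bound the number of faces of $G_{n,p}$ by controlling the number of short cycles. Since $m \ll n^{1+\frac1j}$, we have $p \ll n^{-1+\frac1j} = n^{\frac{1-j}{j}}$, so the expected number of cycles of length exactly $\ell$ in $G_{n,p}$, which is roughly $\frac{(np)^\ell}{2\ell}$, satisfies $(np)^\ell \ll n^{\ell/j}$. I would fix a slowly growing cutoff $L = L(n)$ (say $L = \lfloor \log\log n\rfloor$, or any $L\to\infty$ with $L$ growing slowly enough that the estimates below work), and split the faces of an optimal embedding of $G_{n,p}$ into \emph{short} faces of length at most $j+1$ (more precisely, I expect the relevant threshold to be length $\le j+1$, since a face of length $\ell$ uses at least $\ell$ edges but is ``charged'' against cycles of length $\le \ell$) and \emph{long} faces. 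Actually the cleanest split is: faces of length at most $j+2$ versus faces of length at least $j+3$; I will determine the exact constant so that the arithmetic below produces the factor $\frac{2}{j+2}$.

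Here are the steps in order. First, by Markov's inequality, whp the number of cycles of length $\ell$ in $G_{n,p}$ is $o(m)$ for every fixed $\ell$ with $3\le\ell\le j+2$ (since $(np)^\ell/\ell \ll n^{(j+2)/j} \cdot (\text{something}) $ — I need to be careful that $n^{(j+2)/j}$ may exceed $m$; in fact the correct observation is that $(np)^{j+2} = o(m)$ is \emph{not} automatic, so the true cutoff must be chosen as the largest $\ell$ with $(np)^\ell = o(m)$, which since $np = o(n^{1/j})$ means $(np)^{\ell} = o(n^{\ell/j})$, and this is $o(m)$ as long as $\ell \le j$; for $\ell = j+1$ one has $(np)^{j+1} = o(n^{(j+1)/j}) = o(n \cdot n^{1/j})$ which need not be $o(m)$). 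So the honest cutoff is cycle length $\le j$, giving: whp there are $o(m)$ cycles of length at most $j$, hence $o(m)$ faces of length at most $j$. Second, every face of an optimal embedding has length at least $3$, and each edge contributes exactly $2$ to the sum of face lengths, so $\sum_F \mathrm{len}(F) = 2e(G_{n,p})$. Splitting the faces into those of length $\le j+1$ (of which there are $o(m)$ by the first step — wait, I only controlled length $\le j$; faces of length exactly $j+1$ need a separate treatment) and those of length $\ge j+2$, and using that each long face has length $\ge j+2$, gives $f\left(G_{n,p}\right) \le o(m) + \frac{2e(G_{n,p})}{j+2}$.

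The main obstacle, and the place I expect to spend real care, is the boundary case of faces whose length is just above the cutoff — specifically whether to put the threshold at $j$, $j+1$, or $j+2$, and making sure the count of ``short'' cycles that I can prove to be $o(m)$ actually reaches far enough to get the clean constant $\frac{2}{j+2}$. The resolution is: faces of length $\ell \le j+1$ are charged against cycles, and one shows whp there are $o(m)$ such faces by bounding cycles of length $\le j+1$; the subtlety is that $(np)^{j+1}$ need not be $o(m)$ on its own, but $m\ll n^{1+1/j}$ gives $np\ll n^{1/j}$, so $(np)^{j+1}\ll n^{(j+1)/j}=n\cdot n^{1/j}$, and since also $np\ll n^{1/j}$ we get $(np)^{j+1}=(np)^j\cdot np\ll n\cdot n^{1/j}\cdot\frac{1}{n^{?}}$ — I will instead simply use that the number of faces of length $\le j+1$ is at most the number of cycles of length $\le j+1$ \emph{plus} a negligible correction, and bound the latter using the hypothesis directly, choosing to state the cutoff wherever the estimate $(np)^\ell = o(m)$ genuinely holds. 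Since $np = o(n^{1/j})$ we have $(np)^\ell = o(n^{\ell/j}) = o(m)$ precisely when $\ell/j \le 1 + 1/j$ up to the $o$, i.e.\ for $\ell \le j+1$: indeed $(np)^{j+1} = o(n^{(j+1)/j})$ and $m \gg n$ does \emph{not} give $n^{(j+1)/j}=o(m)$ in general, but one can instead bound faces of length exactly $j+1$ by a slightly different argument or absorb them — I would check this carefully and, if needed, replace $j+1$ by $j+2$ at the cost of nothing, since the long faces all have length $\ge j+3 > j+2$ then and we still only lose a lower-order term. Once $f\left(G_{n,p}\right) \le (1+o(1))\frac{2e(G_{n,p})}{j+2}$ whp is established, I combine it with $e(G_{n,p}) \sim m$ whp (the standard concentration of $e(G_{n,p})$ used in the proof of Corollary~\ref{faceequiv}) to get $f\left(G_{n,p}\right) \le (1+o(1))\frac{2}{j+2}m$ whp, and then Corollary~\ref{faceequiv} transfers this to $f(n,m) \le (1+o(1))\frac{2}{j+2}m + o(m) = (1+o(1))\frac{2}{j+2}m$ whp, which is exactly the claim.
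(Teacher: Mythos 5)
Your plan is the same as the paper's: transfer to $G_{n,p}$ via Corollary~\ref{faceequiv}, bound short faces (length $\leq j+1$) by the number of short cycles, and bound long faces by the edge budget $2e(G_{n,p})$ divided by $j+2$. You also correctly identify the one place where care is needed. But you never actually clinch that one place, and the escape hatch you propose is wrong.

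The point you keep circling is whether the expected number of cycles of length $\ell\leq j+1$ is $o(m)$. It is, and the estimate is short: writing $p=m/\binom{n}{2}$, the hypothesis $m\ll n^{1+1/j}$ gives $np\ll n^{1/j}$, hence $(np)^j\ll n$, hence
\[
(np)^{j+1} \;=\; (np)^j\cdot np \;\ll\; n\cdot np \;=\; n^2p \;=\;\Theta(m).
\]
Your doubt came from bounding $(np)^{j+1}\ll n^{(j+1)/j}$ and then comparing $n^{(j+1)/j}$ to $m$, which is lossy --- you should compare $(np)^{j+1}$ to $n^2p$ directly, as above (this is exactly the step $O\big((np)\max\{1,(np)^j\}\big)=o(n^2p)$ in the paper's computation). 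Once this is in hand, Markov gives whp $o(m)$ cycles of length $\leq j+1$, hence $o(m)$ faces of length $\leq j+1$, and the inequality $2e\geq 3f'+(j+2)(f-f')$ (where $f'$ is the number of short faces) gives $f\leq \frac{2}{j+2}e+f'$, which combined with $e(G_{n,p})=(1+o(1))m$ whp finishes the job.

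Your proposed fallback --- shift the threshold from $j+1$ to $j+2$ ``at the cost of nothing'' --- does not work. If short faces have length $\leq j+2$, then long faces have length $\geq j+3$, and the edge-budget bound would give constant $\tfrac{2}{j+3}$, not $\tfrac{2}{j+2}$. Worse, you then need $o(m)$ cycles of length $\leq j+2$, but $(np)^{j+2}\ll n\cdot(np)^2$ while $m=\Theta(n\cdot np)$, so this requires $np=o(1)$, which fails for, e.g., $m=n\log n$. Conversely, moving the threshold down to $j$ makes the cycle bound trivial but gives the constant $\tfrac{2}{j+1}$, which is too large. So $j+1$ is the unique threshold that works, and the bound $(np)^{j+1}=o(m)$ is the essential arithmetic fact you must prove rather than hedge around.

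Apart from that unresolved step and the incorrect fallback, the structure of your argument matches the paper's proof.
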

\begin{proof}
We will use the $G_{n,p}$ model with $p = \frac{m}{\binom{n}{2}}$,
and show that the number of faces
is at most 
$(1+o(1)) \frac{2}{j+2} \binom{n}{2} p$ whp
(we will then be done,
by an application of Corollary~\ref{faceequiv}).
Thus, we are required to show that,
given any constant $\epsilon > 0$,
the number of faces is at most
$(1+\epsilon) \frac{2}{j+2} \binom{n}{2}p$ whp.

We will follow a similar argument to that used in the proof of Theorem~\ref{rodthm} (see~\cite{rod}),
which involves showing that whp $G_{n,p}$ will have few short cycles,
and hence few small faces,
and hence few faces in total.

Note that the expected number of cycles in $G = G_{n,p}$ of length at most $j+1$ is
\begin{eqnarray*}
\sum_{i=3}^{j+1} \binom{n}{i} \frac{i!}{2i} p^{i}
& \leq & \sum_{i=3}^{j+1} \frac{n^{i}p^{i}}{2i} \\
& \leq & \sum_{i=3}^{j+1} (np)^{i} \\
& \leq & (j+1) \max \left\{ np, (np)^{j+1} \right\} \quad (\textrm{since either } np \leq 1 \textrm{ or } np \geq 1) \\
& = & O \left( \max \left\{ np, (np)^{j+1} \right\} \right) \\
& = & O \left( (np) \max \left\{ 1, (np)^{j} \right\} \right) \\
& = & o(n^{2}p) \quad \left(\textrm{since } 1 \ll n \textrm{ and }
np = \frac{nm}{\binom{n}{2}} \ll \frac{n^{2+\frac{1}{j}}}{n^{2}} = n^{\frac{1}{j}} \right).
\end{eqnarray*}
Thus, by Markov's inequality,
we can say that
whp $G$ has no more than $\frac{1}{2(j+2)} \epsilon \binom{n}{2} p$
cycles of length at most $j+1$.

Let us now consider an embedding of $G$.
Note that the statement of this lemma is certainly true if $G$ is acyclic
(since then there is only one face),
so we may assume that $G$ is not acyclic,
in which case every face of the embedding must contain a cycle.

Let $f^{\prime}$ denote the number of faces in this embedding with length at most $j+1$.
Then every such face must contain a cycle of length at most $j+1$,
and every such cycle can only be included in at most two faces.
Hence,
whp we have
\begin{equation} \label{fprime2}
f^{\prime} \leq \frac{1}{j+2} \epsilon \binom{n}{2} p.
\end{equation}

Now let $f$ denote the total number of faces in this embedding,
and observe that
\begin{displaymath}
2 e(G) \geq 3 f^{\prime} + (j+2) (f - f^{\prime}) 
= (j+2)f - (j-1)f^{\prime}.
\end{displaymath}
Thus, we have
\begin{eqnarray*}
f & \leq & \frac{2}{j+2} e(G) + \frac{j-1}{j+2} f^{\prime} \\
& \leq & \frac{2}{j+2} e(G) + f^{\prime} \\
& \leq & \frac{2}{j+2} e(G) + \frac{1}{j+2} \epsilon \binom{n}{2} p
\quad
\textrm{whp \quad (by~\eqref{fprime2})} \\
& \leq & \frac{2}{j+2} \left( 1 + \frac{\epsilon}{2} \right) \binom{n}{2} p + \frac{2}{j+2} \frac{\epsilon}{2} \binom{n}{2} p
\quad
\textrm{whp} \\
& = & (1 + \epsilon) \frac{2}{j+2} \binom{n}{2} p,
\end{eqnarray*}
and so we are done.
\end{proof}

As a consequence of Lemma~\ref{facelemma},
we may derive the following important corollary:

\begin{Corollary} \label{facecor}
Let $m=m(n)$ satisfy both $m \to \infty$ as $n \to \infty$ and $m \leq n^{1+o(1)}$.
Then
\begin{displaymath}
f(n,m) = o(m) \quad \textrm{whp.}
\end{displaymath}
\end{Corollary}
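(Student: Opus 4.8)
The plan is to derive Corollary~\ref{facecor} from Lemma~\ref{facelemma} by handling the condition $m \le n^{1+o(1)}$ directly. The key observation is that saying $m \le n^{1+o(1)}$ means precisely that $m \le n^{1+\delta_n}$ for some sequence $\delta_n \to 0$; in particular, for every fixed $j \in \mathbb{N}$ we eventually have $m \le n^{1+1/j}$, so in fact $m \ll n^{1+1/j}$ for every fixed $j$ (since $m \le n^{1+\delta_n}$ with $\delta_n < 1/(j+1)$ for large $n$ gives $m \ll n^{1+1/j}$). Hence Lemma~\ref{facelemma} applies with any $j$ we like.

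First I would fix an arbitrary constant $\epsilon > 0$; the goal is to show $f(n,m) < \epsilon m$ whp. Choose $j = j(\epsilon) \in \mathbb{N}$ large enough that $\frac{2}{j+2} < \frac{\epsilon}{2}$ (say). By the discussion above, $m$ satisfies $m \to \infty$ and $m \ll n^{1+1/j}$, so Lemma~\ref{facelemma} gives $f(n,m) \le (1+o(1))\frac{2}{j+2} m$ whp. Since $(1+o(1))\frac{2}{j+2} < \frac{\epsilon}{2} \cdot (1+o(1)) < \epsilon$ for all large $n$, we conclude $f(n,m) < \epsilon m$ whp. As $\epsilon$ was arbitrary, this is exactly the assertion $f(n,m) = o(m)$ whp, in the sense of Definition~\ref{orderdefn}.

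There is essentially no hard part here — the corollary is a quantifier manipulation on top of Lemma~\ref{facelemma}. The one point requiring a little care is the interplay of the two limits: $j$ must be chosen \emph{before} taking $n \to \infty$, so that the $o(1)$ in Lemma~\ref{facelemma} (which depends on $j$) is a genuine null sequence in $n$ for that fixed $j$; then the bound $(1+o(1))\frac{2}{j+2} < \epsilon$ holds for all sufficiently large $n$. One should also note explicitly why $m \le n^{1+o(1)}$ implies $m \ll n^{1+1/j}$ for each fixed $j$: writing $m = n^{1 + \delta_n}$ with $\delta_n \to 0$, we have $m / n^{1+1/j} = n^{\delta_n - 1/j} \to 0$ since $\delta_n - 1/j \to -1/j < 0$. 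With that remark in place the proof is a two-line deduction.
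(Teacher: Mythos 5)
Your proposal is correct and follows essentially the same approach as the paper: fix $\epsilon>0$, choose $j$ large enough that $\frac{2}{j+2}$ is sufficiently small (you use $\frac{2}{j+2}<\frac{\epsilon}{2}$, the paper uses $\frac{2}{j+2}<\frac{\epsilon}{1+\epsilon}$; both work), and apply Lemma~\ref{facelemma}. The only difference is that you spell out more carefully why $m\le n^{1+o(1)}$ implies $m\ll n^{1+1/j}$ for each fixed $j$ and emphasize choosing $j$ before letting $n\to\infty$, both of which the paper leaves implicit.
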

\begin{proof}
We are required to show that,
given any constant $\epsilon > 0$,
we have $f(n,m) < \epsilon m$ whp.

We may simply choose a value $j \in \mathbb{N}$ such that
$j > \frac{2(1+\epsilon)}{\epsilon} - 2$,
in which case $\frac{2}{j+2} < \frac{\epsilon}{1+\epsilon}$.
Then, by Lemma~\ref{facelemma},
we have
\begin{eqnarray*}
f(n,m) & < & (1+\epsilon) \frac{2}{j+2} m 
\quad \textrm{whp} \\
& < & (1+\epsilon) \frac{\epsilon}{1+\epsilon} m \\
& = & \epsilon m,
\end{eqnarray*}
and so we are done.
\end{proof}

The proof of Theorem~\ref{deltacor} is now straightforward:

\begin{proof}[Proof of Theorem~\ref{deltacor}]
The upper bound holds for all $m$
--- we simply use Euler's formula
\begin{displaymath}
g(n,m) = \frac{1}{2} (m-n-f(n,m)+\kappa(n,m)+1)
\end{displaymath}
from Theorem~\ref{euler},
and observe that $n \geq \kappa(n,m)$ and $f(n,m) \geq 1$.

The lower bound also follows from Euler's formula,
using $n=o(m)$ and $f=o(m)$ whp by Corollary~\ref{facecor}.
\end{proof}

\section{$m \sim \lambda n \textrm{ for } \lambda > \frac{1}{2}$: proof of Theorem~\ref{lambdathm}} \label{lambda}

In this section,
we shall deal with the case when $m \sim \lambda n$ for $\lambda > \frac{1}{2}$ by proving Theorem~\ref{lambdathm},
showing that $g(n,m) = (1+o(1)) \mu (\lambda)m$
for the given function $\mu (\lambda)$
satisfying the various stated properties.

The proof again utilises Euler's formula and Corollary~\ref{facecor} on $f(n,m)$.
For Theorem~\ref{deltacor},
the role of $\kappa(n,m)$ was insignificant,
since we had $m \gg n \geq \kappa(n,m)$.
However, since we now have $m = O(n)$,
this time we find that we require more precise knowledge of $\kappa(n,m)$,
and we extract this (in Corollary~\ref{bol612}) from work in~\cite{bol}.

In order to establish the desired properties of $\mu (\lambda)$,
we shall find it helpful to first consider a related function $u$
(see Defintion~\ref{udefn}),
and so the first half of this section will involve investigating the latter.
We start with the definition:

\begin{Definition} \label{udefn}
Let the function
$u:[0,\infty) \to \mathbb{R}, c \mapsto u(c)$ be defined by
\begin{displaymath}
u(c) = \frac{1}{c} \sum_{r=1}^{\infty} \frac{r^{r-2}}{r!} \left( ce^{-c} \right)^{r}.
\end{displaymath}
\end{Definition}

Note that this function is well-defined,
since the function $c \mapsto ce^{-c}$ is maximised at $c=1$
and so
$\frac{r^{r-2}}{r!} \left( ce^{-c} \right)^{r} \leq \frac{r^{r-2}}{r!} e^{-r} 
\leq \frac{r^{-5/2}}{\sqrt{2 \pi}}$ by Stirling's bound,
which means that the sum does indeed converge
(and at $c=0$, we have $u(c)=1$).

Let us next observe two fundamental properties of the function $u$:

\begin{Lemma} [\cite{bol}, remark following Theorem 5.12] \label{bolremark}
	\begin{displaymath}
		u(c) = 1 - \frac{c}{2} \textrm{ for } c \in [0,1].
	\end{displaymath}
\end{Lemma}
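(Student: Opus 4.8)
The plan is to express the series in terms of the \emph{tree function} $T(x) = \sum_{r\ge 1} \frac{r^{r-1}}{r!}x^r$, the analytic inverse on $[0,1]$ of the map $y \mapsto ye^{-y}$, characterised by $T(x) = xe^{T(x)}$ for $|x| \le e^{-1}$. The identity to exploit is the classical generating-function relation
\[
\sum_{r=1}^{\infty} \frac{r^{r-2}}{r!}\, x^{r} = T(x) - \tfrac12 T(x)^{2} \qquad (|x| \le e^{-1}),
\]
which reflects Cayley's formula ($r^{r-2}$ counts labelled trees on $r$ vertices, $r^{r-1}$ counts rooted ones). If a self-contained derivation is preferred to citing \cite{bol}, I would differentiate: the derivative of the right-hand side is $T'(x)\bigl(1 - T(x)\bigr)$, while differentiating $T(x) = xe^{T(x)}$ gives $T'(x)\bigl(1 - T(x)\bigr) = T(x)/x = \sum_{r\ge 1} \frac{r^{r-1}}{r!}x^{r-1}$, which is exactly the derivative of the left-hand side; since both sides vanish at $x=0$ and are analytic on the open disc of radius $e^{-1}$, they coincide there.

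Next, for $c \in (0,1]$ I would set $x = ce^{-c}$. Since $y \mapsto ye^{-y}$ is strictly increasing on $[0,1]$ with maximum $e^{-1}$ attained at $y=1$, we have $0 < ce^{-c} \le e^{-1}$, so $x$ lies in the domain of convergence, and $c$ is the \emph{unique} point of $[0,1]$ with $ce^{-c} = x$; hence $T(ce^{-c}) = c$. Substituting into the identity yields
\[
\sum_{r=1}^{\infty} \frac{r^{r-2}}{r!}\,\bigl(ce^{-c}\bigr)^{r} = c - \tfrac{c^{2}}{2},
\]
and therefore $u(c) = \frac1c\bigl(c - \tfrac{c^{2}}{2}\bigr) = 1 - \tfrac{c}{2}$, as required. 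The endpoint $c=0$ is already handled by the remark following Definition~\ref{udefn} that $u(0) = 1 = 1 - 0/2$, so the statement holds on all of $[0,1]$.

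The only delicate point is the endpoint $c = 1$, where $x = e^{-1}$ lies exactly on the boundary of the disc of convergence, so the termwise identity and the evaluation $T(e^{-1}) = 1$ need a word of justification. Here one invokes Abel's theorem together with continuity of $T$ up to the boundary: both series converge at $x = e^{-1}$ because $\frac{r^{r-2}}{r!}e^{-r} = O(r^{-5/2})$ (already noted after Definition~\ref{udefn}) and $\frac{r^{r-1}}{r!}e^{-r} = O(r^{-3/2})$ by Stirling's bound, so the identity extends to $x = e^{-1}$ by taking the limit $x \uparrow e^{-1}$. I expect this boundary bookkeeping to be the main (and only) obstacle; the remainder is a direct substitution. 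Alternatively, one may simply cite the remark after Theorem~5.12 of~\cite{bol} and skip the derivation entirely.
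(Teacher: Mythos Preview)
Your proof is correct. The paper itself does not prove this lemma at all: it is stated with a direct attribution to the remark following Theorem~5.12 of~\cite{bol}, with no accompanying argument. Your approach via the tree function $T(x)=xe^{T(x)}$ and the classical identity $\sum_{r\ge1}\frac{r^{r-2}}{r!}x^{r}=T(x)-\tfrac12T(x)^{2}$ is the standard self-contained derivation, and your handling of the boundary point $c=1$ via Abel's theorem and the $O(r^{-3/2})$ and $O(r^{-5/2})$ Stirling estimates is the right way to justify that endpoint. So you have supplied strictly more than the paper does; your closing remark that one may simply cite~\cite{bol} is in fact exactly the paper's route.
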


\begin{Lemma} \label{ucont}
	The function $u(c)$ is continuous for $c \ge 0$.
\end{Lemma}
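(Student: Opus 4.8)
The plan is to express $u$ in terms of an ordinary power series and then invoke standard continuity facts. Define $\phi(x) := \sum_{r=1}^{\infty} \frac{r^{r-2}}{r!} x^{r}$, so that for every $c > 0$ we have $u(c) = \frac{1}{c}\,\phi\!\left(c e^{-c}\right)$. The first step is to check that $\phi$ is continuous on the closed interval $[0, 1/e]$. The radius of convergence of the power series is exactly $1/e$ (this follows from $\frac{r^{r-2}}{r!} = \Theta\!\left(r^{-5/2} e^{r}\right)$, a consequence of Stirling's formula), and at the endpoint $x = 1/e$ the series converges absolutely, since the paper has already noted that $\frac{r^{r-2}}{r!} e^{-r} \le \frac{r^{-5/2}}{\sqrt{2\pi}}$ and $\sum_{r} r^{-5/2} < \infty$. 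Hence by the Weierstrass $M$-test the series converges uniformly on $[0, 1/e]$, so $\phi$ is continuous there (Abel's theorem would serve equally well).

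Next I would deduce continuity of $u$ on $(0,\infty)$. The map $c \mapsto c e^{-c}$ is continuous on $[0,\infty)$ and, being maximised at $c = 1$ with value $1/e$, takes all its values in $[0, 1/e]$; composing with the continuous $\phi$ shows that $c \mapsto \phi\!\left(c e^{-c}\right)$ is continuous on $[0,\infty)$. Dividing by $c$, which is continuous and nonzero on $(0,\infty)$, shows that $u$ is continuous on $(0,\infty)$.

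It then remains to treat the point $c = 0$, which is the only place where the factor $1/c$ could cause trouble. Here I would simply appeal to Lemma~\ref{bolremark}: on $[0,1]$ we have $u(c) = 1 - \frac{c}{2}$, which is visibly continuous, and in particular $\lim_{c \to 0^{+}} u(c) = 1 = u(0)$. (Alternatively, one can argue directly: the $r = 1$ term of $\phi$ is $x$, so $\phi(x)/x \to 1$ as $x \to 0^{+}$, whence $u(c) = e^{-c}\cdot \phi\!\left(c e^{-c}\right)/\!\left(c e^{-c}\right) \to 1$ as $c \to 0^{+}$.) Combining the two pieces, $u$ is continuous on all of $[0,\infty)$.

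I do not anticipate a genuine obstacle here; the only mild subtlety is the behaviour of the power series at the right endpoint $x = 1/e$ of its interval of convergence, but the absolute-convergence bound already quoted in the paper makes the $M$-test argument go through immediately, and the potential singularity of $1/c$ at $c = 0$ is removed by Lemma~\ref{bolremark}.
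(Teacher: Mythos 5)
Your proof is correct and uses essentially the same argument as the paper: continuity at $c=0$ is handled by Lemma~\ref{bolremark}, and continuity for $c>0$ follows from uniform convergence of the series established via the Weierstrass $M$-test together with the Stirling bound $\frac{r^{r-2}}{r!}e^{-r} \le \frac{r^{-5/2}}{\sqrt{2\pi}}$. The auxiliary function $\phi$ and the discussion of the radius of convergence are a slight repackaging, but the underlying idea is identical to the paper's.
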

\begin{proof}
	Since we already have continuity at $c=0$ by Lemma~\ref{bolremark},
	it now suffices for us to show that the sum
	$\sum_{r=1}^{\infty} \frac{r^{r-2}}{r!} \left( ce^{-c} \right)^{r}$
	is uniformly convergent.
	
	But since the function $c \mapsto ce^{-c}$ is maximised at $c=1$,
	this reduces to just showing that
	$\sum_{r=1}^{\infty} \frac{r^{r-2}}{r!} e^{-r}$
	converges.
	For this, we may then simply use Stirling's bound $r! \geq \sqrt{2 \pi r} \left( \frac{r}{e} \right)^{r}$
	and the observation that $\sum_{r=1}^{\infty} r^{-5/2}$ converges,
	and we are done.
\end{proof}

As mentioned,
the proof of Theorem~\ref{lambdathm} will use Euler's formula,
and will require us to collect accurate information on $\kappa (n,m)$.
The following two results relating this to the function $u$ will consequently be extremely useful:

\begin{Theorem} [\cite{bol}, Theorem 6.13] \label{bol613}
Let $m=m(n) = \frac{cn}{2}$ for $c=c(n) \le 8 \log n$,
and let $\beta \in \left( \frac{2}{3}, 1 \right)$ be a fixed constant.
Then
\begin{displaymath}
\mathbb{P} \left[ 
\big| \kappa (n, m) - u(c)n \big|
\geq (\log n)n^{\beta}
\right]
\leq 100 n^{1-2\beta} (\log n)^{-1}.
\end{displaymath}
\end{Theorem}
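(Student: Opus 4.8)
The plan is to combine a first-moment computation for $\kappa(n,m)$ with a concentration inequality around its mean. Write $\kappa(n,m) = T + W$, where $T$ is the number of tree components of $G(n,m)$ (counting isolated vertices) and $W$ is the number of components that contain a cycle. I would aim to prove that $\mathbb{E}[\kappa(n,m)] = u(c)n + o(n^{2/3})$ uniformly over $c \le 8\log n$, and separately that $\kappa(n,m)$ deviates from its mean by more than $\tfrac12(\log n)n^{\beta}$ with probability at most $100\,n^{1-2\beta}(\log n)^{-1}$; since $\beta > \tfrac23$ forces $o(n^{2/3}) \le \tfrac12(\log n)n^{\beta}$ for large $n$, the theorem then follows from the triangle inequality.

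For the first moment of $T$, Cayley's formula gives the exact expression
\begin{displaymath}
\mathbb{E}[T] \;=\; \sum_{k=1}^{n} \binom{n}{k}\, k^{k-2}\, \frac{\binom{\binom{n-k}{2}}{m-k+1}}{\binom{\binom{n}{2}}{m}},
\end{displaymath}
since a fixed spanning tree of a fixed $k$-set is a component exactly when its $k-1$ edges are present, the $k(n-k)$ edges leaving the set are absent, and the remaining $m-k+1$ edges fall among the $\binom{n-k}{2}$ pairs inside the complement. Writing $p = c/n$ and using $\binom{n}{k} = \tfrac{n^{k}}{k!}\bigl(1+O(k^{2}/n)\bigr)$ together with the estimate of the binomial ratio by $p^{k-1}(1-p)^{k(n-k)} = (1+o(1))(c/n)^{k-1}e^{-ck}$, the $k$-th summand is $\bigl(1+O(k^{2}/n)\bigr)\cdot \tfrac{n}{c}\cdot\tfrac{k^{k-2}}{k!}(ce^{-c})^{k}$, so the ``head'' $\sum_{k \le k_{0}}$ with $k_{0} = n^{2/3-\delta}$ matches the definition $u(c)n = \tfrac{n}{c}\sum_{r\ge1}\tfrac{r^{r-2}}{r!}(ce^{-c})^{r}$ up to an additive $o(n^{2/3})$, while the ``tails'' $\sum_{k > k_{0}}$ on both sides are $o(n^{2/3})$ by the Stirling bound $\tfrac{k^{k-2}}{k!}(ce^{-c})^{k} \le \tfrac{k^{-5/2}}{\sqrt{2\pi}}$, with an extra ingredient when $c$ is near $1$, namely that the probability of having a tree component of order much larger than $n^{2/3}$ is small. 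For $W$, I would peel off at most one ``large'' component --- the classical fact that whp $G(n,m)$ has at most one component that is neither a tree nor unicyclic --- and bound the expected number of the remaining cyclic components by counting unicyclic components directly; this expectation is $O\!\bigl(\sum_{k\ge3} k^{-1}(c e^{1-c})^{k}\bigr) = O(\log n)$ uniformly in $c \le 8\log n$, because $ce^{1-c} \le 1$. Hence $\mathbb{E}[\kappa(n,m)] = \mathbb{E}[T] + \mathbb{E}[W] = u(c)n + o(n^{2/3})$.

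For concentration I would use the Azuma--Hoeffding inequality. Reveal the $m$ edges of $G(n,m)$ one at a time in a uniformly random order; since changing a single edge alters $\kappa$ by at most $1$, the associated Doob martingale has increments bounded by $1$, and therefore $\mathbb{P}\bigl[\,\bigl|\kappa(n,m) - \mathbb{E}[\kappa(n,m)]\bigr| \ge t\,\bigr] \le 2e^{-t^{2}/(2m)}$. Taking $t = \tfrac12(\log n)n^{\beta}$ and using $m = \tfrac{cn}{2} \le 4n\log n$, the right-hand side is at most $2\exp\!\bigl(-\tfrac{1}{32}(\log n)\,n^{2\beta-1}\bigr)$, which for $\beta > \tfrac12$ tends to $0$ faster than any fixed power of $n$ and in particular lies below $100\,n^{1-2\beta}(\log n)^{-1}$ once $n$ is large. (One could instead avoid martingales with a genuine second-moment argument: expand $\kappa$ into indicator variables for components of each isomorphism type, show that two disjoint tree components are nearly independent so that $\mathrm{Var}(\kappa(n,m)) = O(n\log n)$, and apply Chebyshev's inequality; this is more laborious but is presumably closer to the original proof and accounts for the explicit constant $100$.)

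The step I expect to be the main obstacle is the uniformity of the first-moment estimate across all $c \le 8\log n$, and in particular through the near-critical window $c = 1 + o(1)$. There the terms $\tfrac{k^{k-2}}{k!}(ce^{-c})^{k}$ of the series defining $u$ decay only polynomially (like $k^{-5/2}$), the approximation $\binom{n}{k} \approx n^{k}/k!$ degrades exactly at the scale $k \asymp n^{2/3}$ of the largest non-giant component, and the contributions of $k$ near and beyond $n^{2/3}$ --- on both the probabilistic side and the deterministic side $u(c)n$ --- must be shown to be $o(n^{2/3})$; this is where the cut-off $k_{0} = n^{2/3-\delta}$ and the separate handling of large and complex components (to avoid double-counting what becomes the giant) do the real work. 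By contrast, the regime where $c$ is bounded away from $1$, and the whole concentration step, are comparatively routine.
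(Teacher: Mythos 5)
The paper does not prove this result; it is imported verbatim as Theorem 6.13 of Bollob\'as's \emph{Random Graphs} monograph, and is used as a black box to derive Corollary~\ref{bol612}. So there is no ``paper's own proof'' to compare against, and your proposal is best read as an attempted reconstruction of Bollob\'as's argument.

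Your overall architecture (first-moment estimate $\mathbb{E}[\kappa(n,m)] = u(c)n + \text{error}$, followed by concentration around the mean, combined via the triangle inequality) is the right one, and your self-diagnosis is accurate: the delicate step is the uniformity of the first-moment estimate as $c$ passes through the critical window. Three remarks on the details. First, you do not actually need $\big|\mathbb{E}[\kappa]-u(c)n\big| = o(n^{2/3})$: since $\beta>\frac23$ strictly, $n^{\beta}\log n$ beats $n^{2/3}$ by a positive power of $n$, so any bound of the form $O(n^{2/3}\,\text{polylog}(n))$ --- which is what the dyadic-cut-off argument naturally produces near $c=1$ --- already suffices; insisting on little-$o$ of $n^{2/3}$ makes the critical-window analysis look harder than it needs to be. Second, your Azuma step, while valid (even with the Lipschitz constant corrected to $2$ for the $G(n,m)$ edge-exposure martingale, the exponent $\frac{(\log n)\,n^{2\beta-1}}{128}$ still vastly exceeds what is needed for $\beta>\frac12$), gives a tail bound that is super-polynomially small --- far stronger than the stated $100\,n^{1-2\beta}(\log n)^{-1}$. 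The very form of the target inequality, $\text{const}\cdot n^{1-2\beta}(\log n)^{-1}$, is the signature of Chebyshev with $\mathrm{Var}(\kappa)=O(n\log n)$ and $t=(\log n)n^{\beta}$, so your parenthetical alternative (a genuine second-moment bound on $\kappa$) is surely what Bollob\'as actually does and is where the explicit constant $100$ comes from; the Azuma shortcut proves a sharper statement by a different route. Third, your bound $\mathbb{E}[W]=O(\log n)$ is loose: the expected number of unicyclic components is $\frac12\sum_{k\ge 3}\frac{(ce^{-c})^k}{k}+O(1)=O(1)$ uniformly (since $ce^{-c}\le e^{-1}$), and whp there is at most one more complex component, so $\mathbb{E}[W]=O(1)$; this does not affect the conclusion but the factor $ce^{1-c}$ in your sum is a typo for $ce^{-c}$.

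Modulo the acknowledged gap on uniformity of the first-moment estimate near $c=1$ --- which is genuine analytic work and not a one-liner, precisely because the approximation $\binom{n}{k}\approx n^k/k!$ degrades at scale $k\asymp n^{2/3}$, the same scale at which the sums defining $u$ and the expected component counts both have their interesting tails --- the proposal is a coherent and essentially correct plan of attack.
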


\begin{Corollary} \label{bol612}
Let $\lambda \geq 0$ be a fixed constant.
Then
\begin{displaymath}
\kappa (n, \lfloor \lambda n \rfloor) = (1+o(1))u(2\lambda)n \quad \textrm{whp.}
\end{displaymath}
\end{Corollary}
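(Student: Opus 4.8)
The plan is to deduce the corollary directly from Theorem~\ref{bol613}, using the continuity of $u$ (Lemma~\ref{ucont}) to pass from the exact value $c(n)=2\lfloor\lambda n\rfloor/n$ to the limit $2\lambda$. First I would dispose of the degenerate case $\lambda=0$ by hand: here $\lfloor\lambda n\rfloor=0$, so $\kappa(n,0)=n=u(0)n$, which matches the claim since $u(0)=1$.

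For $\lambda>0$, set $m=\lfloor\lambda n\rfloor$ and $c=c(n)=2m/n$, so that $m=\tfrac{cn}{2}$ exactly and $c=2\lambda+O(1/n)\to 2\lambda$. In particular $c(n)$ is bounded, hence $c(n)\le 8\log n$ for all large $n$, so Theorem~\ref{bol613} applies. Fixing any $\beta\in(\tfrac23,1)$ (say $\beta=\tfrac34$), the theorem gives
\[
\mathbb{P}\bigl[\,|\kappa(n,m)-u(c)n|\ge(\log n)n^{\beta}\,\bigr]\le 100\,n^{1-2\beta}(\log n)^{-1}\longrightarrow 0,
\]
and since $(\log n)n^{\beta}=o(n)$, we conclude that whp $\kappa(n,m)=u(c)n+o(n)$.

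It remains to rewrite $u(c)n$ as $(1+o(1))u(2\lambda)n$ and to absorb the additive error. By Lemma~\ref{ucont}, $u$ is continuous, so $u(c)\to u(2\lambda)$ as $n\to\infty$; moreover, keeping only the $r=1$ term of the defining series shows $u(c)\ge e^{-c}$, whence $u(2\lambda)\ge e^{-2\lambda}>0$, so that $u(2\lambda)n=\Theta(n)$. Therefore $u(c)n=(1+o(1))u(2\lambda)n$, and adding the $o(n)=o\bigl(u(2\lambda)n\bigr)$ fluctuation from the previous step still yields $\kappa(n,\lfloor\lambda n\rfloor)=(1+o(1))u(2\lambda)n$ whp, as required.

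I do not anticipate a genuine obstacle here; the only points requiring any care are the routine conversion of $\lfloor\lambda n\rfloor$ into the form $\tfrac{cn}{2}$ with a slowly varying $c=c(n)$, and the observation that $u(2\lambda)$ is bounded away from $0$ — this is precisely what lets the additive $o(n)$ error term from Theorem~\ref{bol613} be rephrased as a multiplicative $(1+o(1))$ factor, and it also reassures us that the trivial bound $\kappa\le n$ is consistent with the statement.
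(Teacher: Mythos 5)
Your proof is correct and follows essentially the same route as the paper's: apply Theorem~\ref{bol613} with $c(n)=2\lfloor\lambda n\rfloor/n$ and then use the continuity of $u$ from Lemma~\ref{ucont}. You usefully spell out two details the paper leaves implicit — that the additive $(\log n)n^{\beta}=o(n)$ error converts to a multiplicative $(1+o(1))$ factor because $u(2\lambda)\ge e^{-2\lambda}>0$, and the trivial $\lambda=0$ case — but these are refinements of, not departures from, the paper's argument.
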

\begin{proof}
	By Theorem~\ref{bol613}
	(with $c=c(n) = \frac{2 \lfloor \lambda n \rfloor}{n}$),
	we have
	\begin{displaymath}
	\kappa (n, \lfloor \lambda n \rfloor) = (1+o(1))u \left( \frac{2 \lfloor \lambda n \rfloor}{n} \right) n \quad \textrm{whp,}
	\end{displaymath}
	and so the result then follows from the continuity of $u$
	(see Lemma~\ref{ucont}).
\end{proof}

We may also derive a second helpful corollary:

\begin{Corollary} \label{expcor}
Let $\lambda \geq 0$ be a fixed constant.
Then
\begin{displaymath}
u(2\lambda) = \lim_{n \to \infty} \frac{1}{n}
\mathbb{E} \left[ \kappa (n, \lfloor \lambda n \rfloor) \right].
\end{displaymath}
\end{Corollary}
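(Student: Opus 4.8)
The plan is to upgrade the ``whp'' statement of Corollary~\ref{bol612} to a statement about the expectation, exploiting the fact that $\tfrac{1}{n}\kappa(n,m)$ is a \emph{bounded} random variable. Write $X_n := \tfrac{1}{n}\kappa(n,\lfloor \lambda n\rfloor)$, and note that every graph on the vertex set $[n]$ has at least one and at most $n$ components, so $0 \le X_n \le 1$ for all $n$. First I would dispose of the degenerate case $\lambda = 0$: here $m = 0$, the graph is empty, $\kappa(n,0) = n$ deterministically, and $u(0)=1$ (as remarked after Definition~\ref{udefn}), so both sides of the claimed identity equal $1$. So assume $\lambda > 0$. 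Then $u(2\lambda)>0$, since each term $\tfrac{r^{r-2}}{r!}\bigl(2\lambda e^{-2\lambda}\bigr)^{r}$ in the defining series is strictly positive and the series converges. Hence the conclusion of Corollary~\ref{bol612}, namely $\kappa(n,\lfloor\lambda n\rfloor) = (1+o(1))u(2\lambda)n$ whp, is meaningful, and (by the convention of Definition~\ref{orderdefn}) says exactly that $X_n \to u(2\lambda)$ in probability: for every $\epsilon>0$, $\mathbb{P}\bigl[\,|X_n - u(2\lambda)|>\epsilon\,\bigr]\to 0$.

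It then remains to invoke the standard principle that a uniformly bounded sequence of random variables converging in probability also converges in expectation (bounded convergence). Concretely, fix $\epsilon\in(0,u(2\lambda))$ and split
\[
\mathbb{E}[X_n] \;=\; \mathbb{E}\bigl[X_n \mathbbm{1}_{\{|X_n-u(2\lambda)|\le\epsilon\}}\bigr] \;+\; \mathbb{E}\bigl[X_n \mathbbm{1}_{\{|X_n-u(2\lambda)|>\epsilon\}}\bigr].
\]
The first term lies between $(u(2\lambda)-\epsilon)\,\mathbb{P}\bigl[|X_n-u(2\lambda)|\le\epsilon\bigr]$ and $u(2\lambda)+\epsilon$, and since $\mathbb{P}\bigl[|X_n-u(2\lambda)|\le\epsilon\bigr]\to 1$ this forces $u(2\lambda)-\epsilon\le\liminf_n\mathbb{E}[X_n]$ and $\limsup_n\mathbb{E}[X_n]\le u(2\lambda)+\epsilon$; the second term is at most $\mathbb{P}\bigl[|X_n-u(2\lambda)|>\epsilon\bigr]=o(1)$ because $X_n\le 1$. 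Letting $\epsilon\to 0$ yields $\mathbb{E}[X_n]\to u(2\lambda)$, which is precisely the stated identity $u(2\lambda)=\lim_{n\to\infty}\tfrac{1}{n}\mathbb{E}[\kappa(n,\lfloor\lambda n\rfloor)]$.

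There is no substantial obstacle here: the entire content is the passage from ``whp'' to ``in expectation'', and the only ingredient that makes this work is the deterministic uniform bound $\kappa(n,m)\le n$ (equivalently $X_n\le 1$), without which such an implication would fail in general. Everything else — the positivity of $u(2\lambda)$ and the handling of $\lambda=0$ — is routine bookkeeping.
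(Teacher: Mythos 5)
Your proof is correct and follows essentially the same route as the paper: both arguments upgrade the whp convergence of Corollary~\ref{bol612} to convergence in expectation via bounded convergence, using the deterministic bound $1 \le \kappa(n,m) \le n$ (equivalently your $0 \le X_n \le 1$, or the paper's $|\kappa(n,\lfloor\lambda n\rfloor)-u(2\lambda)n|\le n$ derived from $u(2\lambda)\le 1$). The separate treatment of $\lambda=0$ and the remark that $u(2\lambda)>0$ are harmless but unnecessary — the bounded-convergence argument goes through uniformly for all $\lambda\ge 0$ without either observation.
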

\begin{proof}
Note first that Corollary~\ref{bol612} implies that
$u(2 \lambda) \le 1$,
and hence we must always have
\begin{equation} \label{newu1}
\left| \kappa (n, \lfloor \lambda n \rfloor) - u(2\lambda)n \right| \le n.
\end{equation}

Given any $\epsilon > 0$,
Corollary~\ref{bol612} also implies that there exists $N=N(\epsilon)$ such that
\begin{equation} \label{newu2}
\mathbb{P} [ \left| \kappa (n, \lfloor \lambda n \rfloor) - u(2\lambda)n \right| > \epsilon n] < \epsilon 
\end{equation}
for all $n \ge N$.

Hence,
by combining~\eqref{newu1} and~\eqref{newu2},
we obtain
\begin{displaymath}
\mathbb{E} [ \left| \kappa (n, \lfloor \lambda n \rfloor) - u(2\lambda)n \right| ] \leq 2 \epsilon n
\end{displaymath}
for all $n \ge N$.

Thus,
since $\epsilon$ was arbitrary,
we have
\begin{displaymath}
\mathbb{E} \left[ \kappa (n, \lfloor \lambda n \rfloor) \right] = u(2\lambda)n + o(n),
\end{displaymath}
and so we are done.
\end{proof}

We shall now prove one final lemma,
after which we shall then have all the ingredients ready for our proof of Theorem~\ref{lambdathm}:

\begin{Lemma} \label{derivlemma}
For all $c>0$,
the derivative $u^{\prime} (c)$ exists and is monotonically increasing
(and so the function $u(c)$ is convex for $c \geq 0$).
\end{Lemma}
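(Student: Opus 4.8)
The plan is to obtain a closed form for $u$ in which $u'$ becomes transparent. On $(0,1]$ Lemma~\ref{bolremark} already gives $u(c)=1-\tfrac{c}{2}$, so $u'(c)=-\tfrac12$ there. For $c>1$ I would first check that $u(c)=\tfrac1c\sum_{r\ge1}\tfrac{r^{r-2}}{r!}(ce^{-c})^r$ may be differentiated term by term: on any compact subinterval of $(1,\infty)$ one has $ce^{1-c}\le\rho<1$, so by the Stirling bound noted after Definition~\ref{udefn} the series of derivatives is dominated by a convergent series (Weierstrass $M$-test), and differentiability on $(1,\infty)$ follows.

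The crucial idea is the substitution matching the ``conjugate point''. For every $c\ge0$ let $w=w(c)$ be the unique solution in $[0,1]$ of $we^{-w}=ce^{-c}$; this is well defined because $x\mapsto xe^{-x}$ maps $[0,1]$ bijectively onto $[0,e^{-1}]$ and $ce^{-c}\in[0,e^{-1}]$, and it satisfies $w(c)=c$ exactly when $c\le1$, while $w$ is strictly decreasing on $(1,\infty)$ because $ce^{-c}$ is. Since the sum $S(c):=\sum_{r\ge1}\tfrac{r^{r-2}}{r!}(ce^{-c})^r$ depends on $c$ only through $ce^{-c}$, and since Lemma~\ref{bolremark} tells us $S(c)=c\,u(c)=c-\tfrac{c^2}{2}$ for $c\in[0,1]$ --- and as $c$ ranges over $[0,1]$ the value $ce^{-c}$ already sweeps all of $[0,e^{-1}]$ --- we get the closed form $S(c)=w-\tfrac{w^2}{2}$, hence
\[
u(c)=\frac{w(2-w)}{2c}\qquad\text{for all }c>0.
\]
Implicit differentiation of $we^{-w}=ce^{-c}$ gives $(1-w)e^{-w}w'=(1-c)e^{-c}$, and using $we^{-w}=ce^{-c}$ to cancel the exponentials yields $w'=\tfrac{w(1-c)}{c(1-w)}$ for $c\ne1$. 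Substituting this into the derivative of $\tfrac{w(2-w)}{2c}$ and simplifying gives the clean identity
\[
u'(c)=\frac{w(w-2c)}{2c^2}=\tfrac12\bigl(t(c)-1\bigr)^2-\tfrac12,\qquad t(c):=\frac{w(c)}{c}\in(0,1].
\]
In particular $u'(c)\to-\tfrac12$ as $c\to1$, which agrees with the value on $(0,1]$, so $u'$ extends continuously across $c=1$ and $u'(1)=-\tfrac12$; thus $u'$ is defined and continuous on all of $(0,\infty)$.

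It remains only to note that $t$ is non-increasing: $t\equiv1$ on $(0,1]$, and for $c>1$ the numerator $w(c)$ is strictly decreasing and positive while the denominator $c$ is strictly increasing, so $t(c)=w(c)/c$ is strictly decreasing there. Since $\psi(s):=\tfrac12(s-1)^2-\tfrac12$ is decreasing on $(0,1]$ (its derivative is $s-1\le0$), the composition $u'=\psi\circ t$ is non-decreasing on $(0,\infty)$ --- constant equal to $-\tfrac12$ on $(0,1]$ and strictly increasing on $[1,\infty)$ --- and convexity of $u$ on $[0,\infty)$ follows from continuity of $u$ together with the monotonicity of $u'$ on the interior. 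The main obstacle is really just spotting the closed form $S(c)=w-\tfrac{w^2}{2}$: after that the differentiation is mechanical, and the one point of care --- that $w$ is not differentiable at $c=1$ (its one-sided derivatives there are $+1$ and $-1$) --- turns out to be harmless, since the one-sided limits of $u'$ at $c=1$ coincide. The term-by-term differentiation is routine.
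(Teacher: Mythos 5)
Your argument is correct, and it takes a genuinely different route from the paper's. You exploit the classical ``conjugate point'' identity: since $S(c):=\sum_r\frac{r^{r-2}}{r!}(ce^{-c})^r$ depends on $c$ only through $ce^{-c}$, and $S(c)=c-\tfrac{c^2}{2}$ on $[0,1]$ by Lemma~\ref{bolremark}, you get the closed form $u(c)=\frac{w(2-w)}{2c}$ with $we^{-w}=ce^{-c}$, $w\in[0,1]$, from which a mechanical computation yields $u'(c)=\tfrac12(w/c-1)^2-\tfrac12$; monotonicity of $u'$ is then an elementary consequence of $w/c$ being non-increasing. The paper instead proceeds in two separate steps: differentiability of $u$ is established by term-by-term differentiation (uniform convergence via Stirling, as you also sketch), but monotonicity of $u'$ is proved probabilistically --- via Corollary~\ref{expcor}, $u(c)$ is expressed as a scaled limit of $\mathbb{E}[\kappa(n,\cdot)]$, and the increase of $u'$ is read off from the fact that $p(n,i)$, the probability that the $i$th added edge reduces the component count, is monotonically decreasing in $i$. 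Your route buys a clean closed form for $u'$ and a purely analytic proof that sidesteps both the probabilistic representation and the issue the paper explicitly flags (that a direct second-derivative argument fails because the comparison series $\sum r^{-1/2}$ diverges); the paper's route buys a conceptually transparent, ``structural'' reason for the convexity that requires no knowledge of the tree-function identity. One small economy you could note: once you have the closed form $u(c)=\frac{w(2-w)}{2c}$, the term-by-term differentiation step is redundant, since differentiability away from $c=1$ follows directly from differentiability of $w$, and you already handle $c=1$ by matching one-sided limits.
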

\begin{proof}
We shall first look to establish that the derivative $u^{\prime} (c)$ exists for all $c>0$.

Let $\delta \in (0,1)$ be a constant,
and note (by Lemma~\ref{bolremark}) that it suffices for us to consider $c \geq \delta$
(it would not be enough just to consider $c \ge 1$,
since we need to rule out the possibility that the derivatives from the left and right 
at $c=1$ are different).
Hence, we must show that the sum of derivatives
\begin{displaymath}
\sum_{r=1}^{\infty} \left( \frac{d}{dc} \left( \frac{1}{c} \frac{r^{r-2}}{r!} \left( ce^{-c} \right)^{r} \right) \right) 
\end{displaymath}
is uniformly convergent in any compact interval of $[\delta, \infty)$.

We have
\begin{eqnarray*}
\frac{d}{dc} \left( \frac{1}{c} \frac{r^{r-2}}{r!} \left( ce^{-c} \right)^{r} \right) & = &
\frac{1}{c} \frac{r^{r-2}}{r!} \left( ce^{-c} \right)^{r-1} re^{-c} (1-c) 
- \frac{1}{c^{2}} \frac{r^{r-2}}{r!} \left( ce^{-c} \right)^{r} \\
& = & \frac{(1-c)}{c^{2}} \frac{r^{r-1}}{r!} \left( ce^{-c} \right)^{r}
- \frac{1}{c^{2}} \frac{r^{r-2}}{r!} \left( ce^{-c} \right)^{r}.
\end{eqnarray*}
Thus, since $\delta > 0$,
it suffices for us to show that the sums
$\sum_{r=1}^{\infty} \frac{r^{r-1}}{r!} \left( ce^{-c} \right)^{r}$
and
$\sum_{r=1}^{\infty} \frac{r^{r-2}}{r!} \left( ce^{-c} \right)^{r}$
are both uniformly convergent.

But recall that we have already shown the latter during the proof of Lemma~\ref{ucont},
and note that the former follows from the same argument
(using the convergence of
$\sum_{r=1}^{\infty} r^{-3/2}$
instead of $\sum_{r=1}^{\infty} r^{-5/2}$).

Hence,
the derivative $u^{\prime} (c)$ exists for all $c>0$. \\

We shall now show that $u^{\prime} (c)$ is monotonically increasing for all $c>0$.
Note that it is difficult to do this by investigating the second derivative $u^{\prime\prime}(c)$,
since adapting the uniform convergence arguments above would this time lead us to a comparison with
$\sum_{r=1}^{\infty} r^{-1/2}$,
which is divergent.
Hence,
we shall instead introduce an alternative method involving Corollary~\ref{expcor}.

Observe that the graph $G(n,m)$ may be constructed by adding edges one-by-one uniformly at random
(thus inducing an ordering of these edges).
Hence, if we let
\begin{displaymath}
p(n,i) := \mathbb{P}[\textrm{adding edge $i$ will reduce the number of components}],
\end{displaymath}
then for $m_{1} \leq m_{2}$ we have
\begin{displaymath}
\mathbb{E} \left[ \kappa (n,m_{1}) - \kappa (n,m_{2}) \right] = \sum_{i=m_{1}+1}^{m_{2}} p(n,i).
\end{displaymath}

Since we have already seen that $u^{\prime}(c)$ exists,
we have
\begin{eqnarray*}
u^{\prime}(c) & = & \lim_{\epsilon \to 0} \frac{u(c+\epsilon)-u(c)}{\epsilon} \\
& = & \lim_{\epsilon \to 0^+} \frac{u(c+\epsilon)-u(c)}{\epsilon} \\
& = & \lim_{\epsilon \to 0^+} \frac{1}{\epsilon} \lim_{n \to \infty} \frac{1}{n} \mathbb{E} \left[ \kappa \left( n, \left\lfloor \frac{(c+\epsilon)n}{2} \right\rfloor \right) - \kappa \left( n, \left\lfloor \frac{cn}{2} \right\rfloor \right) \right] \textrm{ \quad (by Corollary~\ref{expcor})} \\
& = & - \lim_{\epsilon \to 0^+} \frac{1}{\epsilon} \lim_{n \to \infty} \frac{1}{n} \sum_{i = \left\lfloor \frac{cn}{2} \right\rfloor +1}^{\left\lfloor \frac{(c+\epsilon)n}{2} \right\rfloor} p(n,i).
\end{eqnarray*}

Observe that, for each $n$, the function $p(n,i)$ is monotonically decreasing in $i$.
Thus, given $c_{1}$ and $c_{2}$ with $c_{1} \leq c_{2}$,
we have
\begin{displaymath}
\sum_{i = \left\lfloor \frac{c_{2}n}{2} \right\rfloor +1}^{\left\lfloor \frac{\left( c_{2}+\epsilon \right) n}{2} \right\rfloor} p(n,i)
\leq 
1+
\sum_{i = \left\lfloor \frac{c_{1}n}{2} \right\rfloor +1}^{\left\lfloor \frac{ \left( c_{1}+\epsilon \right) n}{2} \right\rfloor} p(n,i)
\end{displaymath}
(where the `$1+$' term comes from taking into account the possibility that the number of terms in the first sum could be one greater than the number of terms in the second sum).

Hence,
\begin{displaymath}
\lim_{n \to \infty} \frac{1}{n}
\sum_{i = \left\lfloor \frac{c_{2}n}{2} \right\rfloor +1}^{\left\lfloor \frac{\left( c_{2}+\epsilon \right) n}{2} \right\rfloor} p(n,i)
\leq 
\lim_{n \to \infty} \frac{1}{n}
\sum_{i = \left\lfloor \frac{c_{1}n}{2} \right\rfloor +1}^{\left\lfloor \frac{ \left( c_{1}+\epsilon \right) n}{2} \right\rfloor} p(n,i),
\end{displaymath}
and so $u^{\prime}(c)$ is indeed monotonically increasing.
\end{proof}

We now conclude this section with the proof of our main result:

\begin{proof}[Proof of Theorem~\ref{lambdathm}]
To prove the result,
including showing that the function $\mu (\lambda)$ satisfies the various properties stated in the theorem,
we shall show 
\begin{itemize}
\item[(i)] $\mu (\lambda)$ is continuous for $\lambda \geq 0$; 
\item[(ii)] $\mu \left( \frac{1}{2} \right) = 0$
(and hence $\mu (\lambda) \to 0$ as $\lambda \to \frac{1}{2}$ by (i)); 
\item[(iii)] $\mu(\lambda) \to \frac{1}{2}$ as $\lambda \to \infty$; 
\item[(iv)] $g(n,m) = \mu (\lambda) m + o(m)$ whp for $\lambda > \frac{1}{2}$; 
\item[(v)] $\mu (\lambda)$ is strictly positive for $\lambda > \frac{1}{2}$; 
\item[(vi)] $\mu (\lambda)$ is monotonically increasing for $\lambda \geq 0$. 
\end{itemize}

(i) Proof that $\mu (\lambda)$ is continuous for $\lambda \geq 0$:

Note that
\begin{equation} \label{mueqn}
\mu (\lambda) = \frac{u(2\lambda) + \lambda - 1}{2\lambda}.
\end{equation}
Hence, Property~(i) follows immediately from the continuity of $u$
(see Lemma~\ref{ucont}). \\

(ii) Proof that $\mu \left( \frac{1}{2} \right) = 0$:

This is established by Lemma~\ref{bolremark} and~\eqref{mueqn}. \\

(iii) Proof that $\mu(\lambda) \to \frac{1}{2}$ as $\lambda \to \infty$:

This follows from the observations that $2 \lambda e^{-2 \lambda} \leq e^{-1}$ and that $\sum_{r=1}^{\infty} \frac{r^{r-2}}{r!} e^{-r}$ is convergent
(e.g.~by Stirling's bound,
as with Definition~\ref{udefn}). \\

(iv) Proof that $g(n,m) = \mu (\lambda) m + o(m)$ whp for $\lambda > \frac{1}{2}$:

Recall that we are considering $m \sim \lambda n$,
and recall also that Euler's formula (Theorem~\ref{euler}) gives
$g(n,m) = \frac{1}{2} (m-n-f(n,m) + \kappa(n,m) + 1)$.

By Corollary~\ref{bol612},
we have $\kappa (n, \lfloor \lambda n \rfloor) = (1+o(1))u(2\lambda)n$ whp.
Therefore, by the monotonicity of $\kappa$ 
(with respect to the number of edges)
and the continuity of $u$ (see Lemma~\ref{ucont}),
we also have
$\kappa (n, m) = (1+o(1))u(2\lambda)n$ whp.
Hence,
$\kappa (n,m) = (1+o(1)) (2 \mu(\lambda) + \frac{1}{\lambda} - 1)m$ whp.

Thus, noting from Corollary~\ref{facecor} that 
$f(n,m) = o(m)$ whp,
we are done. \\

(v) Proof that $\mu (\lambda)$ is strictly positive for $\lambda > \frac{1}{2}$:

It follows from Property (iv) that we must certainly have $\mu (\lambda) \geq 0$ for all $\lambda > \frac{1}{2}$.
Hence, let us suppose that there exists $\lambda > \frac{1}{2}$ for which $\mu (\lambda) = 0$,
and note that
(again using Property~(iv))
we then have $g \left( n, \lceil \lambda n \rceil \right) = o(m) = o(n)$.

Let $s = s(n) := \max \left \{ n^{3/4}, \left( \frac{n^{2}}{2} g \left( n, \lceil \lambda n \rceil \right) \right) ^{1/3} \right \} = o(n)$.
Then $s>0$ and $n^{2/3} \ll s \ll n$.

But (by monotonicity)
$g \left( n, \lceil \lambda n \rceil \right) \geq g \left( n, \left \lceil \frac{n}{2} + s \right \rceil \right)$
for large $n$,
since $\lambda > \frac{1}{2}$,
and (by Theorem~\ref{sthm}) whp
$g \left( n, \left \lceil \frac{n}{2} + s \right \rceil \right)
= (1+o(1)) \frac{8s^{3}}{3n^{2}} \geq (1+o(1)) \frac{4}{3} g(n, \lceil \lambda n \rceil)$,
by definition of $s$.
Thus, we obtain a contradiction.
\\

(vi) $\mu (\lambda)$ is monotonically increasing for $\lambda \geq 0$: 

We shall show $\mu^{\prime}(\lambda) \geq 0$ for $\lambda > 0$.
Since we know that $u$ is differentiable
(see Lemma~\ref{derivlemma}),
it follows from~\eqref{mueqn} that we have
\begin{eqnarray*}
\mu^{\prime} (\lambda) & = &
\frac{1}{4 \lambda ^{2}} \left( 2 \lambda \left( u^{\prime} (2 \lambda) + 1 \right) \right)
- 2 (u(2 \lambda) + \lambda - 1) \\
& = & \frac{1}{2 \lambda ^{2}} \left( \lambda u^{\prime} (2 \lambda) - u (2 \lambda) + 1 \right)
\end{eqnarray*}
(where $u^{\prime} (2 \lambda)$ denotes $\frac{d}{d \lambda} u (2 \lambda)$).
Hence, it suffices to show that
$\lambda u^{\prime} (2 \lambda) - u (2 \lambda) + 1 \geq 0$ for all $\lambda > 0$.

We shall accomplish this by establishing that 
\begin{itemize}
\item[(a)] $\lambda u^{\prime} (2 \lambda) - u (2 \lambda) + 1 = 0$ for all $\lambda \in \left( 0, \frac{1}{2} \right)$; 
\item[(b)] $\lambda u^{\prime} (2 \lambda) - u (2 \lambda)$ is monotonically increasing.
\end{itemize}

Note that (a) follows immediately from Lemma~\ref{bolremark}.

For (b), let $\epsilon \geq 0$ and observe that
\begin{eqnarray*}
& & (\lambda + \epsilon) u^{\prime} (2(\lambda + \epsilon))
- u(2(\lambda + \epsilon))
- \lambda u^{\prime} (2\lambda) 
+ u(2\lambda) \\
& = & \lambda \Big( u^{\prime} (2(\lambda + \epsilon)) - u^{\prime} (2\lambda) \Big)
- \epsilon \left( \frac{u(2(\lambda + \epsilon)) - u(2\lambda)}{\epsilon}
- u^{\prime} (2(\lambda + \epsilon)) \right).
\end{eqnarray*}
It follows from the convexity of $u$
(see Lemma~\ref{derivlemma}) that the second bracket is at most $0$,
and it follows from the monotonicity of $u^{\prime}$
(again, see Lemma~\ref{derivlemma})
that the first bracket is at least $0$,
and so we are done.
\end{proof}

\section{$m=\frac{n}{2}+s$: proof of Theorem~\ref{sthm}} \label{s}

In this section,
we shall prove Theorem~\ref{sthm},
which fills in the remaining gap in our picture of $g(n,m)$
by dealing with the case when $m = \frac{n}{2} + s$ for positive $s$ satisfying $n^{2/3} \ll s \ll n$.

The proof will again involve an application of Euler's formula.
However, in order to achieve the desired level of precision,
this time we shall actually work directly with the $2$-core
(see Definition~\ref{2core})
of the largest component of $G(n,m)$,
rather than with the entire graph.

We shall find it helpful to begin by first defining the following concepts,
which are taken 
(with some slight rewording)
from Section 2 of~\cite{luccyc}:

\begin{Definition} \label{leafneighbourdefn}
Given a cycle $C$ in a graph $G$, 
let us define the \emph{leaf neighbourhood} $T(C)$ of $C$
to consist of all trees rooted at $C$
(formally, $T(C)$ is the union of any tree components in $G \setminus V(C)$
that are attached to $C$ by exactly one edge).

Let us also split all neighbours of $C$ which do not belong to $T(C)$ into two groups ---
a vertex of $G \setminus (C \cup T(C))$ adjacent to exactly one vertex of $C$ will be called a \emph{good neighbour},
while a vertex of $G \setminus (C \cup T(C))$ adjacent to more than one vertex of $C$ will be called
a \emph{bad neighbour}.
\end{Definition}

An illustration of these definitions is given in Figure~\ref{leaffig}.
Here, vertices on the cycle $C$ are indicated by $c$,
vertices in the leaf neighbourhood $T(C)$ are indicated by $t$,
and the good and bad neighbours are indicated by $g$ and $b$, respectively.

\begin{figure} [ht]
\setlength{\unitlength}{1cm}
\begin{picture}(6.5,4.7)(0,-1)
\put(1,2.5){\circle*{0.1}}
\put(5,2.5){\circle*{0.1}}
\put(3,-0.5){\circle*{0.1}}
\put(0.85,2.2){$c$}
\put(4.9,2.2){$c$}
\put(2.9,-0.8){$c$}

\put(1,3){\circle*{0.1}}
\put(0.5,3.5){\circle*{0.1}}
\put(1.5,3.5){\circle*{0.1}}
\put(0.75,2.7){$t$}
\put(0.4,3.65){$t$}
\put(1.4,3.65){$t$}

\put(3,0){\circle*{0.1}}
\put(3,0.5){\circle*{0.1}}
\put(2.5,1){\circle*{0.1}}
\put(3.5,1){\circle*{0.1}}
\put(3.5,1.5){\circle*{0.1}}
\put(3,1.5){\circle*{0.1}}
\put(3.5,2){\circle*{0.1}}
\put(3.05,-0.1){$t$}
\put(2.7,0.3){$t$}
\put(2.55,1){$t$}
\put(3.55,1){$t$}
\put(3.55,1.5){$t$}
\put(3.05,1.5){$t$}
\put(3.55,2){$t$}

\put(3,-0.5){\line(2,3){2}}
\put(3,-0.5){\line(-2,3){2}}
\put(1,2.5){\line(1,0){4}}

\put(1,2.5){\line(0,1){0.5}}

\put(1,3){\line(-1,1){0.5}}
\put(1,3){\line(1,1){0.5}}

\put(3,-0.5){\line(0,1){1}}
\put(3.5,1){\line(0,1){1}}
\put(3,0.5){\line(-1,1){0.5}}
\put(3,0.5){\line(1,1){0.5}}
\put(3.5,1){\line(-1,1){0.5}}

\put(1.5,3){\circle*{0.1}}
\put(1,2.5){\line(1,1){0.5}}
\put(1.55,2.9){$t$}

\put(1,2.5){\line(-1,0){1}}
\put(0.5,2.5){\circle*{0.1}}
\put(0,2.5){\circle*{0.1}}
\put(0,2.5){\line(0,-1){0.5}}
\put(0.5,2.5){\line(-1,-1){0.5}}
\put(0,2){\circle*{0.1}}
\put(0.25,2.65){$g$}

\put(3,3){\circle*{0.1}}
\put(3,3){\line(1,-1){0.5}}
\put(3,3){\line(-1,-1){0.5}}
\put(2.5,2.5){\circle*{0.1}}
\put(3.5,2.5){\circle*{0.1}}
\put(2.9,3.1){$b$}
\put(2.35,2.6){$c$}
\put(3.5,2.6){$c$}

\put(5,2.5){\line(1,0){1}}
\put(5.5,2.5){\line(0,1){0.5}}
\put(6,2.5){\line(0,-1){1}}
\put(5.5,2.5){\line(1,-1){0.5}}
\put(6,1.5){\line(1,-1){0.5}}
\put(6,1.5){\line(-1,-1){0.5}}
\put(5.5,2.5){\circle*{0.1}}
\put(6,2.5){\circle*{0.1}}
\put(5.5,3){\circle*{0.1}}
\put(6,2){\circle*{0.1}}
\put(6,1.5){\circle*{0.1}}
\put(6.5,1){\circle*{0.1}}
\put(5.5,1){\circle*{0.1}}
\put(5.35,2.15){$g$}

\put(4,1){\circle*{0.1}}
\put(3.6,0.4){\circle*{0.1}}
\put(4,1){\line(1,-1){0.7}}
\put(3.6,0.4){\line(1,-1){0.7}}
\put(3.95,0.05){\line(2,3){0.4}}
\put(4.35,0.65){\circle*{0.1}}
\put(3.95,0.05){\circle*{0.1}}
\put(4.7,0.3){\circle*{0.1}}
\put(4.3,-0.3){\circle*{0.1}}
\put(4.4,0.65){$g$}
\put(3.8,-0.25){$g$}
\put(4.1,0.95){$c$}
\put(3.7,0.35){$c$}

\put(3,-0.5){\line(-1,0){0.5}}
\put(3,-0.5){\line(-1,-1){0.5}}
\put(2.5,-1){\line(0,1){0.5}}
\put(2.5,-0.5){\circle*{0.1}}
\put(2.5,-1){\circle*{0.1}}
\put(2.15,-0.5){$g$}
\put(2.15,-1){$g$}

\put(1.3,0.8){\line(1,0){0.8333}}
\put(1.3,0.8){\line(-1,1){0.5}}
\put(0.8,0.3){\line(1,1){1}}
\put(1.8,1.3){\circle*{0.1}}
\put(1.3,0.8){\circle*{0.1}}
\put(0.8,1.3){\circle*{0.1}}
\put(0.8,0.3){\circle*{0.1}}
\put(2.1333,0.8){\circle*{0.1}}
\put(1.85,1.3){$c$}
\put(1.25,0.5){$b$}
\put(2,0.5){$c$}

\put(5.5,0){\line(1,0){1}}
\put(6,0){\line(0,-1){0.5}}
\put(5.5,0){\circle*{0.1}}
\put(6,0){\circle*{0.1}}
\put(6.5,0){\circle*{0.1}}
\put(6,-0.5){\circle*{0.1}}
\end{picture}

\caption{An example of a cycle,
its good and bad neighbours,
and its leaf neighbourhood.} \label{leaffig}
\end{figure}
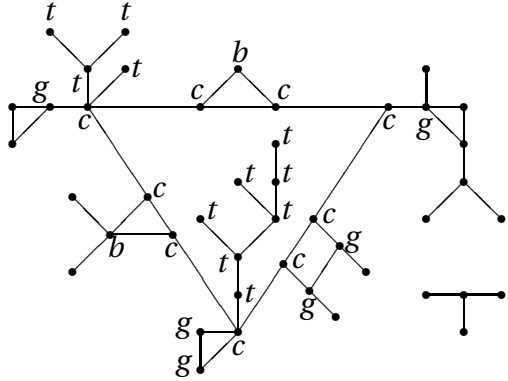

The following result,
which is derived from details within the proof of Theorem 3 of~\cite{luccyc}, 
will be extremely useful:

\begin{Lemma} \label{Poisson}
Let $m=m(n) = \frac{n}{2} + s(n)$,
where $s=s(n)$ satisfies $s>0$ for all $n$
and $n^{2/3} \ll s \ll n$.
Let $x = x(n) = 0.05 \log \left( \frac{s^{3}}{n^{2}} \right)$,
and let $Z(n,i)$ denote the number of cycles in $G(n,m)$ with 
\begin{itemize}
\item[(i)] length at most $\frac{in}{s}$; 
\item[(ii)] leaf neighbourhood at most $\frac{xn^{2}}{s^{2}}$; 
\item[(iii)] between $1$ and $\frac{xn}{s}$ good neighbours; 
\item[(iv)] no bad neighbours. 
\end{itemize}

Then for each fixed $i \in [0,\infty)$,
there exists a value $\lambda(i) \geq 0$ 
and a random variable 
$Y(i) \sim \textrm{Poi} (\lambda(i)) := \textrm{Poisson} \ (\lambda(i))$ 
such that
\begin{displaymath}
Z(n,i) = Y(i) \quad \textrm{whp,}
\end{displaymath}
where $\lambda: [0,\infty) \to [0,\infty), i \mapsto \lambda (i)$ 
is a monotonically increasing function satisfying
$\lambda (0) = 0$ and $\lambda (i) \to \infty$ as $i \to \infty$.
\footnote{We use $X \sim \textrm{Poi}(0)$ to mean $\mathbb{P}[X=0]=1$. \label{note}}
\end{Lemma}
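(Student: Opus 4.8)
The plan is to obtain Lemma~\ref{Poisson} by transferring, via Corollary~\ref{faceequiv}-style comparisons and the standard subcritical/supercritical structure of $G(n,m)$ around $m=\frac{n}{2}+s$, a Poisson-approximation statement for the relevant cycle counts. The natural workhorse is the method of moments (or the Chen--Stein method): for a sequence of integer-valued random variables $Z(n,i)$ counting ``decorated'' short cycles (short cycle, small leaf neighbourhood, few good neighbours, no bad neighbours), one shows that for each fixed $i$ the factorial moments $\mathbb{E}\big[(Z(n,i))_k\big]$ converge, as $n\to\infty$, to $\lambda(i)^k$ for every fixed $k$, where
\begin{displaymath}
\lambda(i)=\lim_{n\to\infty}\mathbb{E}[Z(n,i)].
\end{displaymath}
Convergence of all factorial moments to those of a Poisson random variable forces $Z(n,i)\xrightarrow{d}\textrm{Poi}(\lambda(i))$, and since $Z(n,i)$ is integer-valued this gives the whp coupling $Z(n,i)=Y(i)$ with $Y(i)\sim\textrm{Poi}(\lambda(i))$ claimed in the statement. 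The relevant first-moment and higher-moment computations are exactly the ones carried out in the proof of Theorem~3 of~\cite{luccyc}, so the bulk of the work is to extract the needed estimates from there and check that the four side-conditions (i)--(iv) do not disturb the Poisson limit.

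First I would set up the expected-value computation. Fixing $i$, the dominant contribution to $\mathbb{E}[Z(n,i)]$ comes from cycles of length $\ell\le \frac{in}{s}$ in the giant component of $G(n,m)$; for each such length the expected number of cycles (with the prescribed tree and neighbour decorations) can be written as a sum over $\ell$ of terms of the shape (number of ways to place an $\ell$-cycle)$\times$(probability the $\ell$ edges are present)$\times$(probability the surrounding structure has the right small leaf neighbourhood, $1$ to $\frac{xn}{s}$ good neighbours, and no bad neighbours). In the regime $n^{2/3}\ll s\ll n$ this is governed by $\varepsilon:=\frac{2s}{n}\to 0$ with $\varepsilon^3 n\to\infty$, and standard branching-process/tree-counting estimates (precisely those in~\cite{luccyc}) show that the expected count of such decorated $\ell$-cycles behaves, to leading order, like $\frac{(1+o(1))}{2\ell}(1-\delta)^{\ell}$ for an appropriate $\delta=\delta(n)\asymp\varepsilon$, summed over $\ell\le \frac{in}{s}\asymp \frac{i}{\varepsilon}$. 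Summing this geometric-type series over $\ell$ produces a finite limit $\lambda(i)$ for each fixed $i$; because the summand is positive and the range of summation grows with $i$, the limiting function $\lambda(i)$ is automatically monotonically increasing in $i$, equals $0$ at $i=0$ (empty sum, so $\textrm{Poi}(0)$ as in footnote~\ref{note}), and tends to $\infty$ as $i\to\infty$ (the tail $\sum_\ell \frac{1}{2\ell}(1-\delta)^\ell$ over $\ell\le c/\varepsilon$ grows like $\frac12\log(1/\delta)\to\infty$ as the cutoff recedes). The conditions (ii)--(iv) only restrict the decorations by $o(1)$-probability events relative to the unconditioned short-cycle structure, so they affect lower-order terms and change neither the existence of the limit nor its qualitative behaviour.

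For the higher factorial moments, the key observation is that two distinct short cycles in the sparse random graph $G(n,\frac{n}{2}+s)$ are, with probability $1-o(1)$, vertex-disjoint and have independent decorations; the probability that two short cycles share a vertex or an edge is $O\!\big(\ell^2/n\big)$ summed appropriately, which is $o(1)$ for each fixed $i$. Hence $\mathbb{E}\big[(Z(n,i))_k\big]=(1+o(1))\big(\mathbb{E}[Z(n,i)]\big)^k\to\lambda(i)^k$, which is exactly the Poisson moment condition. This disjointness/independence estimate, again essentially present in~\cite{luccyc}, is what I expect to be the main technical obstacle: one must argue carefully that conditioning on several disjoint short cycles being present (with their forbidden-bad-neighbour and bounded-good-neighbour constraints) leaves the residual graph distributed closely enough to the original model that the decoration probabilities multiply, and that the ``no bad neighbours'' constraint in particular does not introduce long-range dependence. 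Once the factorial-moment convergence is in hand, the conclusion $Z(n,i)=Y(i)$ whp with $Y(i)\sim\textrm{Poi}(\lambda(i))$, together with the stated properties of $\lambda$, follows immediately.
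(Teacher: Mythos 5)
Your overall strategy (method of moments / Poisson approximation for decorated short cycles) is the right framework, and in fact it is exactly the strategy implemented in the cited reference, but the paper's own proof does not reconstruct it: conditions (i)--(iv) in the Lemma are lifted verbatim from Theorem~3 of~\cite{luccyc}, and that theorem already supplies, for each $i>0$, a coupling $Y(i)\sim\textrm{Poi}(\lambda(i))$ with $Z(n,i)\stackrel{\mathbb{P}}{\to}Y(i)$ together with the explicit formula $\lambda(i)=\frac{1}{\sqrt{8\pi}}\int_0^i\int_0^\infty (e^{4x}-1)\,y^{-3/2}\exp(-\frac{x^2}{2y}-2y)\,\mathrm{d}y\,\mathrm{d}x$. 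The paper's only new content is the trivial upgrade of in-probability convergence to ``$Z(n,i)=Y(i)$ whp'' (using integrality and $\delta=\tfrac12$), the degenerate case $i=0$, and monotonicity of $\lambda$ from non-negativity of the integrand. So most of what you propose to do has already been done once and need not be redone.

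More importantly, your first-moment heuristic contains a real error that would lead you astray if you tried to carry out the programme. You write the expected decorated $\ell$-cycle count as roughly $\frac{1}{2\ell}(1-\delta)^\ell$ with $\delta\asymp\varepsilon=2s/n$, summed over $\ell\le in/s\asymp i/\varepsilon$, and you claim this sum has a finite limit $\lambda(i)$ for each fixed $i$. It does not: $\sum_{\ell\le i/\varepsilon}\frac{1}{2\ell}(1-\delta)^\ell$ grows like $\tfrac12\log(1/\varepsilon)\to\infty$ as $n\to\infty$, even for tiny fixed $i$, because the cutoff $\ell\le i/\varepsilon$ recedes and the unconstrained short-cycle count diverges. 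What renders $\lambda(i)$ finite is precisely condition~(iii), the requirement of \emph{at least one} good neighbour (i.e.\ that the cycle is attached to the rest of the giant rather than sitting in a small unicyclic component); this is a $\Theta(1)$ constraint, not an ``$o(1)$-probability event affecting lower-order terms'' as you assert. In Łuczak's integral it manifests as the factor $(e^{4x}-1)\sim 4x$, which kills the $x\to 0$ divergence that your heuristic retains. Relatedly, you locate ``the main technical obstacle'' in the disjointness/independence estimate for higher factorial moments, but in this regime that part is fairly routine; the genuinely delicate point is the first-moment (Brownian-excursion-type) computation that produces the finite $\lambda(i)$ with the right boundary behaviour $\lambda(0)=0$, $\lambda(i)\to\infty$, and monotonicity. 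Your proposed route would thus need substantial repair before it reproduces the lemma, whereas the paper simply cites the result and checks the three bookkeeping points.
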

\begin{proof}
Let 
\begin{displaymath}
\lambda (i) = \frac{1}{\sqrt{8 \pi}}
\int_{0}^{i} \int_{0}^{\infty}
(e^{4x}-1)y^{-1.5} \exp \left( - \frac{x^{2}}{2y} - 2y \right)
\mathrm{d}y \textrm{ } \mathrm{d}x.
\end{displaymath}
It is observed within the proof of Theorem 3 of \cite{luccyc} that
$\lambda (i) \to \infty$ as $i \to \infty$,
and it is shown that
for each $i \in (0,\infty)$
there exists $Y(i) \sim \textrm{Poi} (\lambda(i))$ such that
\begin{displaymath}
Z(n,i) \stackrel{\mathbb{P}}{\to} Y(i)
\end{displaymath}
(i.e.~for all $\delta > 0$,
we have
$\mathbb{P}[|Z(n,i) - Y(i)| > \delta] \to 0$ as $n \to \infty$).

Since $Z(n,i)$ and $Y(i)$ are always integer-valued,
it is then immediate
(e.g.~by considering $\delta = \frac{1}{2}$)
that we actually have $Z(n,i) = Y(i)$ whp.
The extension of this result to include the case $i=0$ is trivial
(albeit with a slight abuse of notation --- see Footnote~\ref{note}),
and the observation that $\lambda(i)$ is monotonically increasing just comes from the fact that the integrand is always non-negative.
\end{proof}

We shall also use two other facts from~\cite{luccyc}:

\begin{Lemma} [\cite{luccyc}, Fact 8] \label{Fact8}
Let $m=m(n) = \frac{n}{2} + s(n)$,
where $s=s(n)$ satisfies $s>0$ for all $n$
and $n^{2/3} \ll s \ll n$.
Then whp $G(n,m)$ contains no subgraphs of size less than 
$0.1 \frac{n}{s} \log \left( \frac{s^{3}}{n^{2}} \right)$
which have more edges than vertices.
\end{Lemma}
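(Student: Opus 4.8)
The plan is a first-moment argument over \emph{minimal dense subgraphs}. Write $L = L(n) := 0.1\,\tfrac{n}{s}\log\!\big(\tfrac{s^{3}}{n^{2}}\big)$ and $t = t(n) := \tfrac{s^{3}}{n^{2}}$, and note that $n^{2/3}\ll s\ll n$ forces $t\to\infty$. Set $p = p(n) = \tfrac{m}{\binom{n}{2}}$, so that $np = 1 + (2+o(1))\tfrac{s}{n}$; one can either work directly in $G(n,m)$ (using that any fixed subgraph with $e$ edges is present with probability at most $p^{e}$, by the standard inequality $\binom{\binom{n}{2}-e}{m-e}/\binom{\binom{n}{2}}{m}\le p^{e}$), or pass to $G_{n,p}$ via Theorem~\ref{equiv} applied to the monotone decreasing property ``$G$ has no subgraph with more edges than vertices on fewer than $L$ vertices''. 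First I would reduce to a clean family of potential witnesses: if $G$ contains a subgraph with more edges than vertices on fewer than $L$ vertices, then it contains one, say $S$, that is minimal with respect first to the number of vertices and then to the number of edges. A short argument shows such an $S$ must be connected, have exactly $k+1$ edges with $k:=|V(S)|$, and have minimum degree at least $2$ (if some vertex had degree $\le 1$, deleting it would give a smaller dense subgraph; if the excess were $\ge 2$, deleting an edge of a cycle would give a dense subgraph on the same vertex set with fewer edges; disconnectedness would give a dense proper component). Hence $S$ equals its own $2$-core, so after suppressing its vertices of degree exactly $2$ it is a subdivision of one of only three multigraphs of excess one in which every degree is at least three --- the \emph{theta} (two vertices, three parallel edges), the \emph{figure-eight} (one vertex, two loops), and the \emph{handcuffs} (two vertices, a loop at each, one connecting edge). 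In particular $k\ge 4$, and it suffices to prove that the expected number of such subdivisions appearing in the random graph, summed over $4\le k<L$, tends to $0$.

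The counting step bounds, for each $k$, the number $N_k$ of subgraphs of $K_n$ on exactly $k$ vertices that are subdivisions of one of these three kernels. Exploiting the subdivision structure directly --- choose the at most two branch vertices in $[n]$, list the remaining vertices of the subgraph in the order in which they occur along the at most three paths, then insert at most two dividers to mark where one path ends and the next begins --- gives $N_k \le C\, n^{k} k^{2}$ for an absolute constant $C$; crucially there is \emph{no} factor exponential in $k$. Since each such subgraph has $k+1$ edges, the expected number of minimal witnesses is at most
\[
\sum_{k=4}^{L-1} 3C\, n^{k} k^{2} p^{k+1} \;=\; O\!\Big( p \sum_{k=4}^{L-1} k^{2} (np)^{k} \Big) \;=\; O\!\Big( p\,L^{2}\,(np)^{L}\cdot \tfrac{1}{np-1} \Big),
\]
the last step using that $np>1$ so the geometric sum is dominated by its top term.

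The final step is to substitute the parameters. From $np-1=(2+o(1))\tfrac{s}{n}$ and $k\le L$ we get $(np)^{k}\le \exp\!\big((2+o(1))\tfrac{sk}{n}\big)\le t^{\,0.2+o(1)}$, where the exponent $0.2$ comes from $2\cdot 0.1$ and is safely below $1$. Moreover $\tfrac{1}{np-1}=O(n/s)$, $L^{2}=O\big((n/s)^{2}(\log t)^{2}\big)$, $p=O(1/n)$, and $(n/s)^{3}=n/t$; hence the single factor of $n$ hidden in $(n/s)^{3}$ cancels the $1/n$ from $p$, leaving $O\big(t^{-0.8+o(1)}(\log t)^{2}\big)=o(1)$. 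Markov's inequality then shows that whp $G(n,m)$ has no such subgraph, which is the claim (followed by Theorem~\ref{equiv} if one worked in $G_{n,p}$).

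I expect the counting step --- and specifically the requirement of \emph{no} loss exponential in $k$ --- to be the only real obstacle. We are in the barely supercritical regime, where $np>1$, so $(np)^{k}$ is increasing in $k$ and the sum is dominated by its largest term $k\approx L$; a crude count of connected $(k+1)$-edge graphs, for example by first picking a spanning tree at a cost of a factor of order $(ek)^{k}/k!$, would turn the per-term base into $enp>1$ and make the sum diverge. It is precisely the fact that a \emph{minimal} dense subgraph is a subdivision of a bounded multigraph, so that $N_k$ is only $n^{k}$ times a polynomial in $k$, that rescues the first moment; one should also check that the constant $0.1$ in $L$ (indeed any constant below $\tfrac12$, taking $\log$ natural) leaves enough room, and that each of the three kernel types contributes a term of the same order.
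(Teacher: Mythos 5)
The paper does not prove this lemma: it is quoted verbatim as Fact~8 of \L uczak's paper \cite{luccyc}, so there is no ``paper's own proof'' to compare against. Your proposal, however, is a correct self-contained proof, and it is essentially the standard argument (and, as far as I can tell, the one \L uczak himself uses): reduce to a minimal witness, observe that minimality forces it to be connected with excess exactly one and minimum degree at least two, so that its homeomorphic kernel is one of the three cubic multigraphs of excess one, and then run a first moment over subdivisions of these kernels. Your counting bound $N_k=O(n^k k^2)$ is exactly the right way to avoid the fatal exponential-in-$k$ loss that a naive count of connected graphs would incur, the bound $\binom{\binom{n}{2}-e}{m-e}/\binom{\binom{n}{2}}{m}\le p^{e}$ is valid, and the arithmetic at the end is right: with $L=c\,\tfrac{n}{s}\log t$ and $t=s^3/n^2$ one gets an expectation of order $t^{2c-1+o(1)}(\log t)^2$, which vanishes for any $c<\tfrac12$; the choice $c=0.1$ gives comfortable room. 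Two small remarks you might tighten if writing this up: the figure-eight and handcuffs kernels force $k\ge 5$ and $k\ge 7$ respectively, so $k\ge 4$ comes only from the theta case (this does not affect the bound); and in the minimality argument it is worth saying explicitly that the edge you delete in the excess-$\ge 2$ case must lie on a cycle, so the remaining graph still has at least as many edges as vertices on the \emph{same} vertex set, which is what contradicts edge-minimality at the fixed minimal vertex count.
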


\begin{Lemma} [\cite{luccyc}, Fact 9] \label{Fact9}
Let $m=m(n) = \frac{n}{2} + s(n)$,
where $s=s(n)$ satisfies $s>0$ for all $n$
and $n^{2/3} \ll s \ll n$,
and let $a=a(n)$ satisfy $a \to \infty$ as $n \to \infty$ 
but $a < \log \left( \frac{s^{3}}{n^{2}} \right)$.
Then whp every cycle in $G(n,m)$ of length less than $\frac{an}{s}$
has a leaf neighbourhood smaller than $\frac{a^{2}n^{2}}{s^{2}}$
and less than $\frac{a^{2}n}{s}$ neighbours.
\end{Lemma}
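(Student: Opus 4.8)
The plan is to prove both assertions by first‑moment (union‑bound) estimates, the delicate point being that in the barely‑supercritical regime one must weigh the number of short cycles against the concentration of the quantity in question. Throughout I set $\epsilon:=\tfrac{2s}{n}$ (so $G(n,m)$ has average degree $\approx 1+\epsilon$, and $np\approx 1+\epsilon$ for $p:=m/\binom n2$), $x:=\tfrac{s^3}{n^2}$ and $\tau:=\tfrac{a^2n^2}{s^2}$; the hypotheses give $\epsilon\to0$, $\tfrac ns\to\infty$, $x\to\infty$, $\epsilon^2\tau=4a^2$, $\tau=o(s)$ and $e^{O(a)}\le x^{O(1)}$, all of which are used below.

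\emph{Neighbour bound.} A direct first moment suffices here. If a cycle $C$ on $L<\tfrac{an}{s}$ vertices has a set $N$ of at least $\lceil\tfrac{a^2n}{s}\rceil$ neighbours outside $V(C)$, then $C$ is a cycle of $G(n,m)$ and every vertex of $N$ sends an edge to $V(C)$, an event of probability at most $Lp\lesssim\tfrac Ln$ per vertex of $N$. Summing over the cycle (weight $\tfrac{(np)^L}{2L}\le\tfrac{e^{2a}}{2L}$, since $\epsilon L\le 2a$), over $N$ (a factor $\binom{n}{|N|}(Lp)^{|N|}\le\bigl(\tfrac{eLpn}{|N|}\bigr)^{|N|}\le(e/a)^{a^2n/s}$) and over $L$, the expected number of such configurations is at most $e^{2a}\log n\cdot(e/a)^{a^2n/s}$; since $a\to\infty$ the factor $(e/a)^{a^2n/s}$ decays faster than $e^{2a}\log n$ grows, so this tends to $0$.

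\emph{Leaf‑neighbourhood bound.} Write $T(C)$ for the leaf neighbourhood of a cycle $C$. Every cycle lies in the $2$‑core, so a cycle $C$ of length $L<\tfrac{an}{s}$ is either (i) the unique cycle of a unicyclic component of $G(n,m)$, or (ii) a cycle in the $2$‑core of the component of positive excess. In case (i), $C\cup T(C)$ is that whole component, so $|T(C)|\ge\tau$ would produce a unicyclic component with a cycle of length $<\tfrac{an}{s}$ and $\ge\tau$ vertices; the first‑moment estimate for this (choose the cycle; choose the pendant‑tree forest, counted by the generalised Cayley formula $L(\ell+L)^{\ell-1}$; require all $L+\ell$ edges of the structure present and all $\approx(L+\ell)n$ edges leaving it absent) is convergent, because the factor $e^{L}$ from the forest count cancels against $e^{-L}$ from forbidding the edges out of $V(C)$, and the $e^{\pm\ell^2/(2n)}$ coming from $\binom n\ell$ and from $(1-p)^{-\ell^2/2}$ cancel, leaving an expectation $\lesssim\epsilon^2a^{-3}e^{2a-2a^2}\to0$ (the surviving suppression being $(np\,e^{-np})^{\ell}\approx e^{-\ell}e^{-\epsilon^2\ell/2}$, summed over $\ell\ge\tau$).

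\emph{Case (ii), the main obstacle.} For cycles in the giant $2$‑core I would condition on the $2$‑core $K$ and use its known structure (from~\cite{bol} and~\cite{luccyc}): whp $K$ has $\Theta(s^2/n)$ vertices and excess $\Theta(x)$, and is the subdivision of a near‑cubic kernel whose subdivided edges have lengths of order $n/s$. Hence a cycle of $K$ of length $<\tfrac{an}{s}$ traverses only $O(a)$ kernel edges, so the number of such cycles is at most the number of short cycles of the kernel, which is $e^{O(a)}$ whp --- and here Lemma~\ref{Fact8} is used to rule out dense local configurations, so that short cycles of $G(n,m)$ genuinely arise from short cycles of the kernel. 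Moreover, conditionally on $K$ the trees pendant to the $2$‑core vertices are essentially independent and subcritical‑Galton--Watson‑like, each with mean $\Theta(n/s)$ and tail $\mathbb P[\,\cdot\ge t\,]\lesssim t^{-3/2}e^{-\epsilon^2t/2}$; since $|V(C)|=L<\tfrac{an}{s}$, the sum $|T(C)|=\sum_{v\in V(C)}|P_v|$ has mean $O(an^2/s^2)\le\tau/\Omega(a)$, so a single‑big‑jump (subexponential) large‑deviation bound gives $\mathbb P[\,|T(C)|\ge\tau\mid K\,]\lesssim L\,\tau^{-3/2}e^{-\epsilon^2\tau/2}\lesssim\epsilon^2a^{-2}e^{-2a^2}$. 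A union bound over the $e^{O(a)}$ short cycles of $K$ then yields failure probability $\lesssim e^{O(a)}e^{-2a^2}\to0$, since $a\to\infty$; the hypothesis $a<\log x$ is precisely what keeps $e^{O(a)}$ polynomially bounded and places $\tau$ in the range where the Galton--Watson tail is valid. The genuinely delicate steps are thus making the cancellations in the sums of case (i) explicit (a naive bound blows up, since the forest count alone contributes $e^{L}$ with $L$ as large as $\tfrac{an}{s}$) and verifying, uniformly over $K$, the near‑independence and exponential tails of the pendant trees in case (ii); both are essentially the content of the estimates carried out in~\cite{luccyc}.
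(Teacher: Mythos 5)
The paper itself offers no proof of this lemma: it is quoted verbatim from \L uczak \cite{luccyc} (Fact 9), so there is no in-paper argument to compare with, and your sketch is in effect a reconstruction of \L uczak's first-moment/structural analysis. Within that sketch there is one genuine slip and one unaddressed issue. The slip: your final display for the neighbour bound, $e^{2a}\log n\cdot(e/a)^{a^{2}n/s}$, need not tend to zero, because the hypotheses allow $a$ and $n/s$ to grow far more slowly than $\log\log n$ (take, say, $s=n/\ln\ln\ln n$ and $a=\ln\ln\ln n$, which satisfy $a\to\infty$ and $a<\log(s^{3}/n^{2})$); then $(e/a)^{a^{2}n/s}=\exp\bigl(-\Theta\bigl((\ln\ln\ln n)^{3}\ln\ln\ln\ln n\bigr)\bigr)$ while $\log n=\exp(\ln\ln n)$, so the product diverges. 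The expectation itself does go to zero, but only after you replace the wasteful factor $\log n$ by $\log(an/s)$ (the sum over $L$ stops at $an/s$) and check $\ln a+\ln(n/s)+2a=o\bigl((a^{2}n/s)\ln(a/e)\bigr)$; since the whole point of this regime is exactly such balancing (which you yourself flag), this needs to be done correctly. The unaddressed issue: you compute throughout with $p=m/\binom{n}{2}$, but the statement concerns $G(n,m)$; the property is not monotone, so Theorem~\ref{equiv} does not transfer it, and conditioning $G_{n,p}$ on having $m$ edges costs a factor $\Theta(\sqrt m)$ that bounds of size only $e^{-\Theta(a^{2})}$ cannot absorb. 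The repair is to run the first moments directly in $G(n,m)$, using that any prescribed set of $j=o(m)$ edges is present with probability at most $\bigl((1+o(1))p\bigr)^{j}$ — but this should be said.

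For the leaf-neighbourhood bound, the split into unicyclic components versus the giant $2$-core is the right one, and the unicyclic first moment can be closed (the expected number of unicyclic components with at least $a^{2}n^{2}/s^{2}$ vertices is $O\bigl(a^{-2}e^{-2a^{2}}\bigr)$ once the $\binom{n}{\ell}$ and non-edge corrections cancel); however, your stated cancellation is garbled: the surviving per-vertex suppression cannot be $(np\,e^{-np})^{\ell}\approx e^{-\ell}e^{-\epsilon^{2}\ell/2}$, since the $e^{-\ell}$ must itself be cancelled by the $e^{+\ell}$ hidden in the rooted-forest count, and with the factor as written your quoted final bound does not follow. More importantly, the core of case (ii) — the uniform conditional tails of the pendant trees given the $2$-core, and the whp bound $e^{O(a)}$ on the number of short $2$-core cycles (which is not a consequence of Lemma~\ref{Fact8}; it is essentially the Poisson statement of Lemma~\ref{Poisson}, itself extracted from \cite{luccyc}) — is precisely the hard content of \L uczak's paper, and you defer it there. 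So as a self-contained proof the proposal has real gaps; as an outline of the argument in \cite{luccyc} it is broadly faithful.
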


As mentioned,
one further ingredient will be the concept of the $2$-core:

\begin{Definition} \label{2core}
Let us define the \emph{$2$-core} of a graph to be the subgraph formed by repeatedly deleting all vertices with degree less than two.
Equivalently,
the $2$-core of a graph is the maximal subgraph with minimum degree at least two.
\end{Definition}

Observe that deleting a vertex with degree zero or one cannot change the genus of a graph,
and so it follows that the genus of the $2$-core will always be equal to the genus of the original graph.

We are now ready to prove Theorem~\ref{sthm}.
It is well-known that
when $m = \frac{n}{2} + s$ for $s \gg n^{2/3}$,
the graph $G(n,m)$ will whp contain a unique largest component,
called the `giant' component,
and whp all other components will be planar
(see, for example, Theorem 6.15 of~\cite{bol};
see also \cite{bolrio1,bolrio2} for more recent work).
Hence, $g(n,m)$ is whp equal to the genus of the $2$-core of the giant component,
which we shall aim to determine via an application of Euler's formula.

This will require finding a bound for the number of faces in the $2$-core of the giant component,
and we shall proceed by first finding separate bounds for the number of such faces that are `small' and the number that are `large'.
The first part of this argument will involve bounding $Z(n,a(n))$ for a particular function $a(n) \to \infty$.

\begin{proof}[Proof of Theorem~\ref{sthm}]
Throughout this proof,
we shall consider only $i \in \mathbb{N} \cup \{0\}$,
and we shall let $Z(n,i)$, $Y(i)$, and $\lambda(i)$ be as given in the statement of Lemma~\ref{Poisson}.

Thus, for each $i$,
there exists a constant $N_{i}$ such that
\begin{displaymath}
\mathbb{P}[Z(n,i) \neq Y(i)] < \frac{1}{2^{i}} \quad \textrm{for all } n \geq N_{i}.
\end{displaymath}
Note that we may assume $N_{i+1} > N_{i}$ for all $i$,
and that $N_{0} = 0$.

Let us then define the function $b(n)$ by
\begin{displaymath}
b(n) =
\left\{ 
\begin{array}{lll}
0
& \textrm{for }
n < N_{1}, \\
1
& \textrm{for }
N_{1} \leq n < N_{2}, \\
2 
& \textrm{for } 
N_{2} \leq n < N_{3}, \\
3
& \textrm{for }
N_{3} \leq n < N_{4}, \\
\vdots &
\end{array} \right. 
\end{displaymath}
i.e. $b(n)=i$ for $N_{i} \leq n < N_{i+1}$ for all $i$
(note $b(n) \to \infty$ as $n \to \infty$).

Hence, for all $n$,
\begin{equation} \label{beqn}
\mathbb{P}[Z(n,i) \neq Y(i)] < \frac{1}{2^{i}} \quad \textrm{for all } i \leq b(n).
\end{equation}

Now let $c(r)$ denote
$\max \{ i : \lambda(i) \leq r \}$
(where we consider only $i \in \mathbb{N} \cup \{0\}$,
as always),
and note that $c(r)$ is well-defined for all $r \geq 0$,
since $\lambda (0) = 0$ and $\lambda (i) \to \infty$ as $i \to \infty$.

Let us then define the function $a(n)$ by
\begin{equation} \label{adefn}
a(n) = \min \left \{ b(n), \left\lfloor \left(0.05 \log \left( \frac{s^{3}}{n^{2}} \right) \right)^{1/2} \right\rfloor, c \left( \log \left( \frac{s^{3}}{n^{2}} \right) \right) \right \}.
\end{equation}
 
Since $a(n) \leq b(n)$,
\eqref{beqn} implies
\begin{displaymath} 
\mathbb{P}[Z(n,a(n)) \neq Y(a(n))] < \frac{1}{2^{a(n)}} \quad \textrm{for all } n.
\end{displaymath}
Hence,
noting that $a(n) \to \infty$ as $n \to \infty$,
we have
\begin{equation} \label{Zeqn}
Z(n,a(n)) = Y(a(n)) \quad \textrm{whp}.
\end{equation}

Now recall that $Y(i) \sim \textrm{Poi} (\lambda(i))$.
Hence, given any $\epsilon > 0$,
we certainly have $\mathbb{P}[Y(i) > 2 \lambda (i)] < \epsilon$
for all $i$ for which $\lambda(i)$ is sufficiently large.
Thus, since $\lambda(i) \to \infty$ as $i \to \infty$,
we actually have $\mathbb{P}[Y(i) > 2 \lambda (i)] < \epsilon$
for all sufficiently large $i$.
Hence, since $a(n) \to \infty$ as $n \to \infty$,
we have $\mathbb{P}[Y(a(n)) > 2 \lambda (a(n))] < \epsilon$
for all sufficiently large $n$,
i.e.~$Y(a(n)) \leq 2 \lambda (a(n))$ whp.

By definition of $a(n)$ (see~\eqref{adefn}) 
and the monotonicity of $\lambda$,
we have $\lambda (a(n)) \leq \log \left( \frac{s^{3}}{n^{2}} \right)$ for all $n$,
and hence $\lambda (a(n)) = o \left( \frac{s^{3}}{n^{2}} \right)$.
Thus, we have
\begin{equation} \label{Yeqn}
Y(a(n)) = o \left( \frac{s^{3}}{n^{2}} \right) \quad \textrm{whp}.
\end{equation}

Combining~\eqref{Zeqn} and~\eqref{Yeqn},
we hence obtain
\begin{displaymath}
Z(n,a(n)) = o \left( \frac{s^{3}}{n^{2}} \right) \quad \textrm{whp}. 
\end{displaymath} 
\\

For our function $a=a(n)$,
let us define a `short cycle' to be one with length less than $\frac{an}{s}$,
and let us use $C_{sh}(n)$ to denote
the number of short cycles in the giant component of $G(n,m)$
(or equivalently, in the $2$-core of the giant component).

It follows from Lemma~\ref{Fact8} that
whp all short cycles in $G(n,m)$ have no bad neighbours.
Also, by Lemma~\ref{Fact9},
whp all short cycles in $G(n,m)$ have a leaf neighbourhood smaller than $\frac{a^{2}n^{2}}{s^{2}}$ and less than $\frac{a^{2}n}{s}$ neighbours.
It is also the case that whp every short cycle in the giant component must have at least one good neighbour,
since otherwise (given that whp it has no bad neighbours)
the entire giant component would whp consist of just a cycle and its leaf neighbourhood,
and would thus be unicyclic
(which is well-known to be false whp ---
see, for example, Theorem 6.15 of~\cite{bol}).
Hence,
$C_{sh}(n) \leq Z(n,a(n))$ whp, 
and so we must have $C_{sh}(n) = o \left( \frac{s^{3}}{n^{2}} \right)$ whp. 

Now let us use $f_{sh}(n)$ to denote
the number of faces of length less than $\frac{an}{s}$ in any given embedding of the $2$-core of the giant component of $G(n,m)$.
Then it follows that we must have 
$f_{sh}(n) = o \left( \frac{s^{3}}{n^{2}} \right)$ whp too. \\

Let us similarly define
$f_{l}(n)$ to denote
the number of faces of length at least $\frac{an}{s}$ in any given embedding of the $2$-core of the giant component of $G(n,m)$.
Let us also use $v=v(n)$ and $e=e(n)$ to denote the number of vertices and edges,
respectively, in the $2$-core of the giant component.
Then
\begin{eqnarray*}
f_{l}(n) & \leq & \frac{2e}{an/s} \\
& = & \frac{2(v+(e-v))}{an/s} \\
& = & \frac{2((8+o(1))s^{2}/n + (16/3 + o(1)) s^{3}/n^{2})}{an/s}
\quad \textrm{whp} \\
& & \textrm{(by Theorem 4(i) of~\cite{luccyc} and Corollary 1 of~\cite{luccpt})} \\
& = & \frac{(16+o(1)) s^{2}/n}{an/s} \quad \textrm{whp \quad (since } s \ll n) \\
& = & (16 + o(1)) \frac{s^{3}}{a n^{2}} \quad \textrm{whp} \\
& = & o \left( \frac{s^{3}}{n^{2}} \right) \quad \textrm{whp \quad (since } a \to \infty). \\
\end{eqnarray*}

Thus, 
if we let $f=f(n)$ denote the total number of faces in any given embedding of the $2$-core of the giant component,
then we have 
$f = f_{sh}(n) + f_{l}(n) = o \left( \frac{s^{3}}{n^{2}} \right)$ whp.

Now recall that $g(n,m)$, the genus of $G(n,m)$, is whp equal to the genus of the giant component
(this was originally shown in Theorem 12(ii) of~\cite{luccpt}),
and that the latter is equal to the genus of the $2$-core of the giant component.

Hence,
whp 
\begin{eqnarray*}
\phantom{wwwwwwwwwqi}
g(n,m) & = & \frac{1}{2} (e-v-f+2) \textrm{ \quad (using Theorem~\ref{euler})} \\
& = & \frac{1}{2} \left( \left( \frac{16}{3} + o(1) \right) \frac{s^{3}}{n^{2}} - o \left( \frac{s^{3}}{n^{2}} \right) \right) \\
& & \textrm{(again using Corollary 1 of~\cite{luccpt} for $e-v$)} \\
& = & (1+o(1)) \frac{8s^{3}}{3n^{2}}.
\phantom{wwwwwwwwwwwwwwwwwwwwqq}
\qedhere
\end{eqnarray*}
\end{proof}

\section{Contiguity with random graphs on given surfaces} \label{contiguity}

One of our motivations for this paper comes from recent work concerning random graphs on given surfaces.
The typical properties of graphs with genus at most $g$ have been studied in~\cite{dks} and~\cite{kmsfull} for the case when $g$ is a fixed constant,
and questions have been posed on the likely behaviour when $g$ is allowed to grow with $n$.
Hence, in this section,
we shall discuss the contiguity (see Definition~\ref{contiguitydefn}) of such random graph models with $G(n)$ and $G(n,m)$.

We start with the definitions:

\begin{Definition} \label{Sgdefn}
We shall let 
\emph{$S_{g}(n)$}
denote a graph taken uniformly at random from
the set of all labelled graphs 
on $[n]$
with genus at most $g=g(n)$.

Similarly,
we shall let 
\emph{$S_{g}(n,m)$}
denote a graph taken uniformly at random from
the set of all labelled graphs 
on $[n]$
with exactly $m=m(n)$ edges and with genus at most $g=g(n)$.
\end{Definition}

\begin{Definition} \label{contiguitydefn}
We say that two random graph models $A(n)$ and $B(n)$ are \emph{contiguous} if,
for every property $P(n)$,
it is the case that $A(n)$ has property $P(n)$ whp
if and only if $B(n)$ has property $P(n)$ whp.
\end{Definition}

Recall that $G(n) = G_{n, \frac{1}{2}}$.
It hence follows from the work in~\cite{arch}
that $G(n)$ and $S_{g}(n)$ are certainly contiguous for any $g(n)$ satisfying
$g(n) \geq (1+\epsilon) \frac{n^{2}}{24}$
for any constant $\epsilon > 0$
(and even for some $\epsilon(n) =o(1)$),
since $G(n)$ will have genus 
$(1+o(1)) \frac{n^{2}}{24}$ whp.
Thus, the behaviour of $S_{g}(n)$ for such $g(n)$ follows immediately from known results on the behaviour of $G(n)$.

Conversely,
$G(n)$ and $S_{g}(n)$ are certainly not contiguous for any $g(n)$ satisfying
$g(n) \leq (1-\epsilon) \frac{n^{2}}{24}$
for any constant $\epsilon > 0$
(and also not for some $\epsilon(n) = o(1)$),
since there is then a discrepancy with respect to the property of having genus greater than $g(n)$
(observe that we would have $\mathbb{P}[G(n) \textrm{ has genus greater than } g(n)] \to 1$ as $n \to \infty$,
but that $\mathbb{P}[S_{g}(n)$ has genus greater than $g(n)] = 0$, by definition). 

Note that we could also obtain such a bound simply by considering the number of edges.
We know that the expected number of edges in $G(n)$ is 
$\frac{n(n-1)}{4}$,
and that $S_{g}(n)$ has at most $3n-6+6g(n)$ edges,
so this straight away implies that $G(n)$ and $S_{g}(n)$ must be non-contiguous for $g(n) < \frac{n^{2}}{24} - \frac{13n}{24} + 1$.

We may state our result as follows:

\begin{Theorem}
Let $\epsilon > 0$ be a fixed constant.
Then the random graphs $G(n)$ and $S_{g}(n)$ are contiguous
for $g(n) \geq (1+\epsilon) \frac{n^{2}}{24}$,
and are not contiguous for $g(n) \leq (1-\epsilon) \frac{n^{2}}{24}$.
\end{Theorem}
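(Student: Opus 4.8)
The plan is to treat the two assertions separately; in both cases the essential input is the Archdeacon--Grable estimate (obtained by setting $p=\frac12$ in the result of~\cite{arch}), which tells us that $G(n)=G_{n,1/2}$ has genus $(1+o(1))\frac{n^{2}}{24}$ whp.

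\textbf{Non-contiguity.} Suppose $g(n)\le(1-\epsilon)\frac{n^{2}}{24}$. I would take as the distinguishing property $P(n)$: ``the graph has genus greater than $g(n)$''. By the Archdeacon--Grable estimate, $G(n)$ has genus $(1+o(1))\frac{n^{2}}{24}>g(n)$ whp, so $G(n)$ has $P(n)$ whp; on the other hand $S_{g}(n)$ has genus at most $g(n)$ by definition, so $\mathbb{P}[S_{g}(n)\text{ has }P(n)]=0$. Hence the two models are not contiguous. (The identical argument handles any $\epsilon(n)=o(1)$.)

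\textbf{Contiguity.} Suppose $g(n)\ge(1+\epsilon)\frac{n^{2}}{24}$. The key observation is that $S_{g}(n)$ has exactly the distribution of $G(n)$ conditioned on the event $\mathcal{A}_{n}$ that its genus is at most $g(n)$: this is immediate because $G(n)=G_{n,1/2}$ is uniform over all labelled graphs on $[n]$, and restricting a uniform distribution on a finite set to a non-empty subset gives the uniform distribution on that subset. By Archdeacon--Grable together with $g(n)\ge(1+\epsilon)\frac{n^{2}}{24}$ we have $\mathbb{P}[\mathcal{A}_{n}]\to 1$. Then for any property $P(n)$,
\[
\bigl|\mathbb{P}[G(n)\in P]-\mathbb{P}[S_{g}(n)\in P]\bigr|
=\Bigl|\mathbb{P}[G(n)\in P]-\tfrac{\mathbb{P}[G(n)\in P\cap\mathcal{A}_{n}]}{\mathbb{P}[\mathcal{A}_{n}]}\Bigr|
\le\frac{\mathbb{P}[\mathcal{A}_{n}^{c}]}{\mathbb{P}[\mathcal{A}_{n}]}+\mathbb{P}[\mathcal{A}_{n}^{c}]\longrightarrow 0,
\]
and this bound is uniform over all $P(n)$. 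Hence $G(n)$ has $P(n)$ whp if and only if $S_{g}(n)$ has $P(n)$ whp, which is precisely contiguity.

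\textbf{Main obstacle.} There is essentially no serious difficulty here: the proof is just the general principle that conditioning on a whp event changes nothing whp, combined with the known genus of $G(n)$. The only point needing a moment's care is the identification of $S_{g}(n)$ with $G(n)$ conditioned on $\mathcal{A}_{n}$, and that follows directly from the definitions. (The same reasoning, with $G(n)$ and the constant $\frac{n^{2}}{24}$ replaced by $G(n,m)$ and the corresponding genus from Theorems~\ref{deltacor},~\ref{lambdathm} and~\ref{sthm}, would yield analogous contiguity statements for $S_{g}(n,m)$.)
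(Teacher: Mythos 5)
Your proof is correct and follows the same route as the paper's brief discussion preceding the theorem: both directions reduce to the Archdeacon--Grable fact that $G(n)$ has genus $(1+o(1))\frac{n^{2}}{24}$ whp, with non-contiguity witnessed by the property of having genus exceeding $g(n)$ and contiguity following because $S_{g}(n)$ is $G(n)$ conditioned on a whp event. You have merely made explicit the conditioning identity and the resulting total-variation bound, which the paper leaves implicit.
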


By the same arguments,
results on the contiguity of $G(n,m)$ and $S_{g}(n,m)$
for the various different regions of $m$ can now also be obtained,
using the bounds on $g(n,m)$ summarised in Table~\ref{table}:

\begin{Theorem}
Let $\epsilon > 0$ be a fixed constant,
let $m=m(n)$,
and let the function 
$\mu: \left( \frac{1}{2}, \infty \right) \to \mathbb{R},
\lambda \mapsto \mu (\lambda)$
be as defined in Theorem~\ref{lambdathm}.
Then the random graphs $G(n,m)$ and $S_{g}(n,m)$ are contiguous
for 
\begin{displaymath}
g(n) \geq
\left\{ 
{\renewcommand{\arraystretch}{1.5}
\begin{array}{lll}
(1+\epsilon) \frac{m}{6}
& \textrm{if }
m = \Theta \left( n^{2} \right), \\
(1+\epsilon) \frac{jm}{2(j+2)} 
& \textrm{if } 
n^{1 + \frac{1}{j+1}} \ll m = O \left( n^{1 + \frac{1}{j}} \right)
\textrm{ for } j \in \mathbb{N}, \\
\frac{m}{2} 
& \textrm{if }
n \ll m = n^{1+o(1)}, \\
(1+\epsilon) \mu(\lambda)m
& \textrm{if }
m \sim \lambda n
\textrm{ for } \lambda > \frac{1}{2}, \\
(1+\epsilon) \frac{8s^{3}}{3n^{2}} 
& \textrm{if } 
m = \frac{n}{2} + s
\textrm{ for } s>0 \textrm{ and } n^{2/3} \ll s \ll n,
\end{array}} \right. 
\end{displaymath}
and are not contiguous for 
\begin{displaymath}
g(n) \leq 
\left\{ 
{\renewcommand{\arraystretch}{1.5}
\begin{array}{lll}
(1-\epsilon) \frac{m}{6}
& \textrm{if }
m = \Theta \left( n^{2} \right), \\
(1-\epsilon) \frac{jm}{2(j+2)} 
& \textrm{if } 
n^{1 + \frac{1}{j}} \gg m = \Omega \left( n^{1 + \frac{1}{j+1}} \right)
\textrm{ for } j \in \mathbb{N}, \\
(1-\epsilon) \frac{m}{2} 
& \textrm{if }
n \ll m = n^{1+o(1)}, \\
(1-\epsilon) \mu(\lambda)m
& \textrm{if }
m \sim \lambda n
\textrm{ for } \lambda > \frac{1}{2}, \\
(1-\epsilon) \frac{8s^{3}}{3n^{2}} 
& \textrm{if } 
m = \frac{n}{2} + s
\textrm{ for } s>0 \textrm{ and } n^{2/3} \ll s \ll n.
\end{array}} \right. 
\end{displaymath}
\end{Theorem}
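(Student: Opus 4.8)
The plan is to deduce both halves of the statement from Table~\ref{table} by means of a single conditioning principle. The starting point is the observation that $S_g(n,m)$ is precisely $G(n,m)$ conditioned on the event $\mathcal{E} := \{\, g(G) \le g(n)\,\}$ (this requires the conditioning set to be nonempty, i.e.\ $m \le 3n-6+6g(n)$, which I tacitly assume so that $S_g(n,m)$ is well-defined). The principle is: if $\mathbb{P}[G(n,m) \in \mathcal{E}] \to 1$, then $G(n,m)$ and $S_g(n,m)$ are contiguous. Indeed, for an arbitrary property $P$,
\[
\mathbb{P}[S_g(n,m) \text{ has } P] \;=\; \frac{\mathbb{P}\big[\{G(n,m) \text{ has } P\} \cap \mathcal{E}\big]}{\mathbb{P}[\mathcal{E}]},
\]
and since $\mathbb{P}[\mathcal{E}] \to 1$ the numerator differs from $\mathbb{P}[G(n,m) \text{ has } P]$ by at most $\mathbb{P}[\overline{\mathcal{E}}] = o(1)$; hence $\mathbb{P}[S_g(n,m) \text{ has } P] = \mathbb{P}[G(n,m) \text{ has } P] + o(1)$, so one probability tends to $1$ exactly when the other does.

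For the contiguity direction I would simply observe that in each of the five regimes the stated hypothesis on $g(n)$ is a $(1+\epsilon)$-inflation of the whp upper bound on $g(n,m)$ recorded in Table~\ref{table}: $\tfrac{m}{6}$ from Theorem~\ref{rodcor}(ii) when $m = \Theta(n^2)$; $\tfrac{jm}{2(j+2)}$ from Theorem~\ref{rodthm}, and from the upper bound of Theorem~\ref{rodcor}(i) at the boundary $m = \Theta(n^{1+1/j})$, when $n^{1+1/(j+1)} \ll m = O(n^{1+1/j})$; the \emph{exact} bound $\tfrac{m}{2}$ from Theorem~\ref{deltacor} (so no $\epsilon$ is needed in this regime) when $n \ll m = n^{1+o(1)}$; $\mu(\lambda)m$ from Theorem~\ref{lambdathm} when $m \sim \lambda n$; and $\tfrac{8s^3}{3n^2}$ from Theorem~\ref{sthm} when $m = \tfrac n2 + s$. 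In each case this gives $\mathbb{P}[g(n,m) \le g(n)] \to 1$, and the principle above yields contiguity; the $(1+\epsilon)$ factor is precisely what absorbs the $(1+o(1))$ error terms.

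For the non-contiguity direction I would test the two models against the single property $P := \{\, g(G) > g(n)\,\}$. By definition $\mathbb{P}[S_g(n,m) \text{ has } P] = 0$, so it suffices to show $g(n,m) > g(n)$ whp. But in each regime the stated hypothesis on $g(n)$ is a $(1-\epsilon)$-deflation of the whp lower bound on $g(n,m)$ from Table~\ref{table} — via Theorem~\ref{rodcor}(ii); Theorem~\ref{rodthm}, together with the lower bound of Theorem~\ref{rodcor}(i) applied with parameter $j+1$ at the boundary $m = \Theta(n^{1+1/(j+1)})$; Theorem~\ref{deltacor}; Theorem~\ref{lambdathm}; and Theorem~\ref{sthm} respectively — and the $(1-\epsilon)$ slack again dominates the $(1+o(1))$ factor, so $g(n,m) > g(n)$ whp. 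Thus $G(n,m)$ has $P$ whp while $S_g(n,m)$ never does, so the two models are not contiguous.

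There is no real obstacle here: the theorem is a formal corollary of Table~\ref{table} plus the conditioning principle, and the only points needing care are bookkeeping ones — matching each regime to the correct cited bound, dealing with the two boundary cases $m = \Theta(n^{1+1/j})$ and $m = \Theta(n^{1+1/(j+1)})$ through Theorem~\ref{rodcor}(i), and keeping the hypothesis that $S_g(n,m)$ is well-defined (for the contiguity half this is automatic, since $\mathbb{P}[\mathcal{E}] \to 1$ forces the conditioning set to be nonempty for all large $n$). If anything is the "hard part", it is merely verifying that the $(1\pm\epsilon)$ margin in the statement uniformly swallows the $(1+o(1))$ errors in each band — which is immediate once written out.
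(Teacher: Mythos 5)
Your proposal is correct and follows essentially the same route as the paper: the paper also derives this theorem by observing that $S_g(n,m)$ is $G(n,m)$ conditioned on the event $\{g(G)\le g(n)\}$, concluding contiguity when this event holds whp by the bounds of Table~\ref{table}, and concluding non-contiguity by testing the property $\{g(G)>g(n)\}$ when $G(n,m)$ whp exceeds the threshold. The paper leaves the conditioning principle implicit (asserting contiguity ``certainly'' follows from the whp genus bound), whereas you spell out the $\mathbb{P}[S_g \text{ has } P]=\mathbb{P}[G(n,m)\text{ has }P]+o(1)$ calculation and correctly note that the $m/2$ band needs no $\epsilon$-inflation since that upper bound is deterministic; this added care is worthwhile but not a genuinely different argument.
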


\section{The fragile genus: proof of Theorem~\ref{fragile}} \label{fragilesection}

In this section,
we shall prove Theorem~\ref{fragile},
which shows that the genus of a connected graph with bounded degree may well increase dramatically if a small number of random edges are added.

Our proof will involve contracting carefully chosen identically-sized pieces of the graph into super-vertices
(note that this cannot increase the genus),
and then showing that the uniform random graph induced by these super-vertices and the random edges
will whp be sufficiently dense for us to be able to apply Theorem~\ref{lambdathm}.

Before we begin,
we need to state the following useful decomposition:

\begin{Proposition} [\cite{krivnach}, Proposition 4.5] \label{krivnachprop}
Let $H$ be a connected graph with maximum degree at most $\Delta$.
Then for every $l \in \mathbb{N}$,
there exists $t \in \mathbb{N} \cup \{0\}$
and disjoint vertex sets
$V_{1}, V_{2}, \ldots, V_{t} \subset V(H)$
with the following properties: 
\begin{itemize}
\item[(i)] $l \Delta \leq |V_{i}| \leq l \Delta^{2}$ for all $i \in [t]$; 
\item[(ii)] $\sum_{i=1}^{t} |V_{i}| \geq |H| - l \Delta$; 
\item[(iii)] $H[V_{i}]$ is connected for all $i \in [t]$.
\end{itemize}
\end{Proposition}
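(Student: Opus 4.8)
The plan is to reduce the statement to one about trees and then carve a rooted spanning tree into connected pieces by a greedy bottom-up procedure. Since $H$ is connected it has a spanning tree $T$, which I would root at an arbitrary vertex $r$; the maximum degree of $T$ is at most that of $H$, hence at most $\Delta$, so every vertex of $T$ has at most $\Delta$ children. It is enough to produce the sets $V_i$ so that each $T[V_i]$ is connected, since then $H[V_i]$ contains a spanning connected subgraph and so is itself connected, giving property (iii).

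For the procedure, I would traverse $V(T)$ in post-order (each vertex after all of its descendants), maintaining for every already-visited vertex $v$ a \emph{pending set} $P(v)$, which will always be the vertex set of a subtree of $T$ rooted at $v$. For a leaf, set $P(v) := \{v\}$. For an internal vertex $v$ with children $c_1, \dots, c_d$ (so $d \le \Delta$), first set $P(v) := \{v\} \cup \bigcup_{j=1}^{d} P(c_j)$, with the convention that a child $c_j$ whose pending set has already been \emph{extracted} contributes the empty set. If $|P(v)| \ge l\Delta$, then extract $P(v)$: add it to the list of output sets and have $v$ pass the empty set up to its parent. Otherwise, $v$ passes $P(v)$ up unchanged. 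After the root $r$ has been processed, extract its pending set as well if its size is still at least $l\Delta$; the vertices that end up in no output set are exactly those in the root's final pending set.

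It then remains to verify the three properties, which I expect to be routine. Property (iii) follows because, by an easy induction, each $P(v)$ is a union of subtrees of $T$ all containing the vertex $v$, hence the vertex set of a subtree. The lower bound in (i) is immediate, since a set is output only once it has reached size at least $l\Delta$. For the upper bound, note that when $v$'s pending set is extracted we have $|P(v)| = 1 + \sum_{j=1}^{d} |P(c_j)|$, where each $P(c_j)$ that $v$ actually receives has size at most $l\Delta - 1$ (otherwise $c_j$ would already have triggered an extraction), so $|P(v)| \le 1 + \Delta(l\Delta - 1) = l\Delta^2 - \Delta + 1 \le l\Delta^2$. For (ii), the output sets are pairwise disjoint (an extracted set leaves the process for good), and the only uncovered vertices are those of the root's final pending set, which has size at most $l\Delta - 1$; hence $\sum_{i=1}^{t} |V_i| \ge |H| - l\Delta$. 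The degenerate case $|H| < l\Delta$ is covered by taking $t = 0$.

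The one point needing a little care is the constant in the upper bound of (i): one must use both that a vertex of the rooted tree has at most $\Delta$ children and that an already-extracted child contributes nothing, which together pin the worst case at $1 + \Delta(l\Delta - 1)$ rather than something larger, and this is at most $l\Delta^2$ for every $\Delta \ge 1$. Beyond that bookkeeping, the whole argument is a single post-order induction and presents no real obstacle.
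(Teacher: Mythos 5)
Your proof is correct, and worth noting is that the paper itself does not prove this proposition at all — it is quoted verbatim from Krivelevich--Nachmias \cite{krivnach}, where the argument is essentially the one you give: root a spanning tree of $H$, greedily extract lowest subtrees once they reach size $l\Delta$, and bound the extracted piece by $1+\Delta(l\Delta-1)\le l\Delta^{2}$ since each child contributes fewer than $l\Delta$ vertices. So your post-order bookkeeping reproduces the standard proof, and the one delicate point (the upper bound in (i)) is handled exactly as in the source.
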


We may now proceed:

\begin{proof}[Proof of Theorem~\ref{fragile}]
Note that adding an edge can only increase the genus by at most one,
so we certainly have
$g(G) \leq g(H) + k \leq 2 \max \left\{ g(H),k \right\}$.
Also, we clearly have $g(G) \geq g(H)$.
Hence, it just remains to show that
$g(G) = \Omega (k)$ whp.

Recall that
$R=R(n,k)$ is a random graph on $V(H)$ consisting of exactly $k$ edges chosen uniformly at random from $\binom{V(H)}{2}$.
We shall start by showing that we may assume that $k=O(n)$,
since we may otherwise just apply our results on the genus of $G(n,m)$ to $R$
(with $m=k$)
to obtain $g(R) = \Omega(k)$ whp.

As noted in Table~\ref{table}, by~\cite{rod} we have
\begin{equation} \label{new1}
g(R) = (1+o(1)) \frac{k}{6} \quad \textrm{whp}
\end{equation}
if $k = \Theta \left( n^2 \right)$.

If $6n \le k \ll n^2$,
then we may combine Euler's formula (Theorem~\ref{euler})
and Lemma~\ref{facelemma} to see
\begin{eqnarray}
g(R) & > & \frac{1}{2} (k-n-f(R)) 
\quad ( \textrm{by Euler's formula} ) \nonumber \\
& \ge & \frac{1}{2} \left( (1+o(1)) \frac{k}{3} - n \right) \quad \textrm{whp}
\quad \left( \textrm{by Lemma~\ref{facelemma}, since $k \ll n^2$ } \right) \nonumber \\
& \ge & (1+o(1)) \frac{k}{12} \quad \textrm{whp}
\quad \left( \textrm{since $n \le \frac{k}{6}$} \right). \label{new2}
\end{eqnarray}

Thus,
by~\eqref{new1} and~\eqref{new2},
we have $g(R) \ge (1+o(1)) \frac{k}{12}$ whp if $k \ge 6n$,
and so we may indeed assume throughout the remainder of the proof that $k = O(n)$.

Now let $l = \left \lceil \frac{3 \Delta n}{k} \right \rceil$
(note that $l = \Theta \left( \frac{n}{k} \right)$ since $k=O(n)$,
and also that $l=o(n)$),
and let the disjoint vertex sets
$V_{1}, V_{2}, \ldots, V_{t} \subset V(H) = V(G)$
satisfy the three properties stated in Proposition~\ref{krivnachprop}.
Note that
(by Properties~(i) and (ii) of Proposition~\ref{krivnachprop})
\begin{equation}
\frac{n - l \Delta}{l \Delta^{2}} \leq t \leq \frac{n}{l \Delta}, 
\label{teqn}
\end{equation}
and hence $t = \Theta (k)$.

From each of the sets $V_{1}, V_{2}, \ldots, V_{t}$,
let us select a subset $U_i \subset V_i$
with $|U_i| = s := \min_{j} |V_{j}|$
in such a way that each $U_i$ spans a connected subgraph
(this is possible,
by Property~(iii) of Proposition~\ref{krivnachprop}).

Next, let us define an auxiliary random graph $\Gamma$
with vertex set $[t]$,
where two vertices $i,j \in [t]$ are connected by an edge 
if and only if there is an edge of $R$ going between $U_{i}$ and $U_{j}$
(see Figure~\ref{Gammafig},
where thick lines denote the edges of $R$
--- note in particular that this example has no edge in $\Gamma$ between vertex $1$ and vertex $3$,
as there is no edge in $G$ between $U_{1}$ and $U_{3}$).

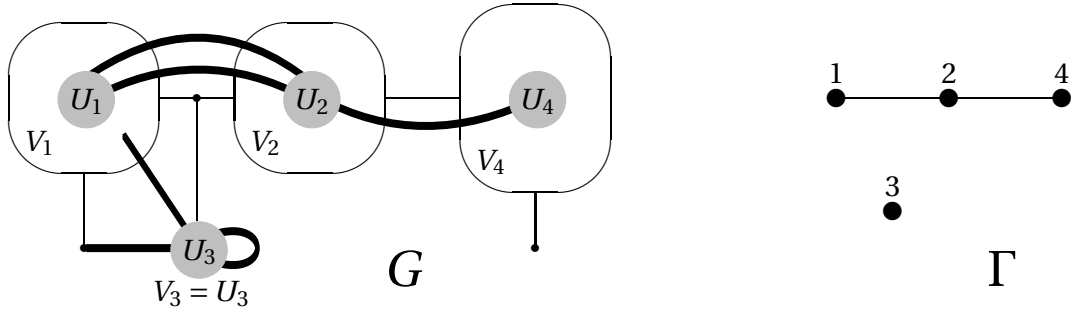
\begin{figure} [ht]
\setlength{\unitlength}{1cm}
\begin{picture}(14.125,3.95)(-1,-0.7)

\put(0,0){\circle*{0.1}}
\put(1.5,2){\circle*{0.1}}
\put(6,0){\circle*{0.1}}

\put(0.9,-0.7){$V_{3}=U_{3}$}

\put(0,2){\oval(2,2)}
\put(-0.8,1.3){$V_{1}$}

\put(3,2){\oval(2,2)}
\put(2.2,1.3){$V_{2}$}

\put(6,2){\oval(2,2.5)}
\put(5.2,1.05){$V_{4}$}

\put(1,2){\line(1,0){1}}
\put(4,2){\line(1,0){1}}
\put(6,0){\line(0,1){0.75}}
\put(0,0){\line(0,1){1}}
\put(1.5,2){\line(0,-1){1.625}}

\put(4,-0.5){\LARGE{$G$}}

\put(-0.4375,-0.55){
\begin{tikzpicture} 

\draw[black, ultra thick]
(0,0) -- (1.2,0);
\draw[black, ultra thick]
(0,0.05) -- (1.2,0.05);
\draw[black, ultra thick]
(1.6,0.15) .. controls (2.5,0.5) and (2.5,-0.5) .. (1.6,-0.15);
\draw[black, ultra thick]
(1.6,0.2) .. controls (2.5,0.55) and (2.5,-0.45) .. (1.6,-0.1);
\draw[black, ultra thick]
(1.6,0.175) .. controls (2.5,0.525) and (2.5,-0.475) .. (1.6,-0.125);
\draw[black, ultra thick]
(1.4,0.15) -- (0.5,1.5);
\draw[black, ultra thick]
(1.4,0.2) -- (0.5,1.55);
\draw[black, ultra thick]
(1.4,0.175) -- (0.5,1.525);
\draw[black, ultra thick]
(3.1,2) .. controls (4,1.5) and (5,1.5) .. (6,2);
\draw[black, ultra thick]
(3.1,2.05) .. controls (4,1.55) and (5,1.55) .. (6,2.05);
\draw[black, ultra thick]
(3.1,2.025) .. controls (4,1.525) and (5,1.525) .. (6,2.025);
\draw[black, ultra thick]
(0.1,2) .. controls (1,2.5) and (2,2.5) .. (2.9,2);
\draw[black, ultra thick]
(0.1,2.05) .. controls (1,2.55) and (2,2.55) .. (2.9,2.05);
\draw[black, ultra thick]
(0.1,2.025) .. controls (1,2.525) and (2,2.525) .. (2.9,2.025);
\draw[black, ultra thick]
(-0.1,2.2) .. controls (1,3) and (2,3) .. (3,2.2);
\draw[black, ultra thick]
(-0.1,2.25) .. controls (1,3.05) and (2,3.05) .. (3,2.25);
\draw[black, ultra thick]
(-0.1,2.225) .. controls (1,3.025) and (2,3.025) .. (3,2.225);

\filldraw [lightgray] (0,2) circle (0.375);
\filldraw [lightgray] (1.5,0) circle (0.375);
\filldraw [lightgray] (3,2) circle (0.375);
\filldraw [lightgray] (6,2) circle (0.375);

\end{tikzpicture}
}

\put(-0.2,1.85){$U_{1}$}
\put(2.8,1.85){$U_{2}$}
\put(5.8,1.85){$U_{4}$}
\put(1.3,-0.15){$U_{3}$}

\put(10,2){\circle*{0.25}}
\put(11.5,2){\circle*{0.25}}
\put(13,2){\circle*{0.25}}
\put(10.75,0.5){\circle*{0.25}}

\put(10,2){\line(1,0){3}}

\put(9.9,2.2){$1$}
\put(11.4,2.2){$2$}
\put(12.9,2.2){$4$}
\put(10.65,0.7){$3$}

\put(12,-0.5){\LARGE{$\Gamma$}}

\end{picture}

\caption{A graph $G$ and the corresponding graph $\Gamma$.} \label{Gammafig}
\end{figure}

Observe that $\Gamma$ is a minor of $G$,
and hence $g(G) \geq g(\Gamma)$.
Thus (recalling that $t = \Theta (k)$), 
it will suffice to show that 
\begin{equation}
g(\Gamma) = \Omega(t) \quad \textrm{whp.}
\label{aim}
\end{equation}

Let us consider the edges of $R$ one-by-one
(in a random order),
and let us call an edge `good' if both
\begin{itemize}
\item[(a)] it lies between a vertex of $U_{i}$ and a vertex of $U_{j}$ for $i \neq j$; 
\item[(b)] no previous edges of $R$ lie between these same two sets $U_{i}$ and $U_{j}$. 
\end{itemize}
Note that $e(\Gamma)$ is then the total number of good edges of $R$.

Observe that the probability that the $r$th edge of $R$ is good is at least
\begin{displaymath}
\frac{\left( \binom{t}{2}-(r-1) \right) s^{2}}{\binom{n}{2}},
\end{displaymath}
since there are at least
$\binom{t}{2} - (r-1)$ ways to choose a pair $U_{i}, U_{j}$
which do not already have an edge of $R$ between them,
and then $s$ ways to choose a vertex from $U_{i}$,
and $s$ ways to choose a vertex from $U_{j}$.

Furthermore, we have
\begin{eqnarray*}
\frac{\left( \binom{t}{2}-(r-1) \right) s^{2}}{\binom{n}{2}} 
& \geq & \frac{\left( \binom{t}{2}-(k-1) \right) s^{2}}{\binom{n}{2}} \\
& = & (1+o(1)) \frac{\binom{t}{2} s^{2}}{\binom{n}{2}} 
\textrm{ \quad (since $t=\Theta(k)$ and $k \to \infty$)} \\
& = & (1+o(1)) \frac{t^{2} s^{2}}{n^{2}} \\
& \geq & (1+o(1)) \frac{(n-l \Delta)^{2} s^{2}}{l^{2} \Delta^{4} n^{2}} 
\textrm{ \quad (by~\eqref{teqn})} \\
& = & (1+o(1)) \frac{s^{2}}{l^{2} \Delta^{4}} 
\textrm{ \quad (since $l=o(n)$)} \\
& \geq & (1+o(1)) \frac{1}{\Delta^{2}} 
\textrm{ \quad (by Property (i) of Proposition~\ref{krivnachprop})} \\
& \geq & \frac{1}{2 \Delta^{2}}
\textrm{ \quad (for large } n).
\end{eqnarray*}

Thus, the number of good edges of $R$
(i.e.~$e(\Gamma)$)
stochastically dominates a random variable with distribution
Bin$\left(k,\frac{1}{2\Delta^{2}}\right)$,
and is therefore at least 
$\frac{k}{(2+\delta)\Delta^{2}}$ whp
for any $\delta > 0$.
Now recall that
$t \leq \frac{n}{l\Delta} \leq \frac{k}{3 \Delta^{2}}$
(since $l \geq \frac{3 \Delta n}{k}$),
and so we find that whp
$e(\Gamma) \geq t$.
Hence, we may assume that $\Gamma$ has at least $t$ edges.

Now let $\Gamma^{*}$ be the random graph formed by considering just the first $t$ edges of $\Gamma$
(i.e.~the first $t$ good edges of $R$).
Then, since each set $U_{i}$ had exactly the same number of vertices,
the graph $\Gamma^{*}$ is in fact a \emph{uniform} random graph with $t$ vertices and $t$ edges.
Thus, by Theorem~\ref{lambdathm},
we have $g(\Gamma^{*}) = \Theta(t)$ whp,
and so $g(\Gamma) = \Omega(t)$ whp,
fulfilling~\eqref{aim}.
\end{proof}

Note that Theorem~\ref{fragile} implies the remarkable fact that whp $G = H \cup R$ will have $\Omega (n)$ genus
even if $H$ is a planar graph
and $k=\epsilon n$ for some very small (but positive) $\epsilon$!
We thus call this the `fragile genus' property.

\section{Discussion} \label{discussion}

In this paper,
we have investigated the genus $g(n,m)$ of the Erd\H{o}s-R\'enyi random graph
\linebreak[4]
$G(n,m)$,
showing how this is affected by changes in $m$.
We have then also examined how the genus of a given base graph grows when random edges are added.
In this section,
we shall aim to give some insight into our results,
as well as discussing various remaining open problems.

Recall that we earlier observed that the ratio of $g(n,m)$ to $m$ increases from $0$ to $\frac{1}{2}$ until $m$ becomes superlinear in $n$,
after which it then decreases from $\frac{1}{2}$ to $\frac{1}{6}$
(see Table~\ref{table}).
Having seen the proofs for these results,
we are now in a position to provide an explanation for this behaviour.

The starting point is to note that Euler's formula gives
\begin{displaymath}
\frac{g}{m} = \frac{1}{2} \left( 1 - \frac{f}{m} + \frac{\kappa}{m} - \frac{n}{m} + \frac{1}{m} \right),
\end{displaymath}
and so we need to consider the terms $\frac{f}{m}$, $\frac{\kappa}{m}$, and $\frac{n}{m}$.

When $m$ is only linear in $n$,
we have
$\frac{f}{m} = o(1)$ whp (see Corollary~\ref{facecor}),
and so the significant terms are $\frac{\kappa}{m}$ and $\frac{n}{m}$.

For the subcritical case
$m \sim \lambda n$ for $\lambda \leq \frac{1}{2}$,
we have 
$\kappa = n - m + o(n)$ (see Corollary~\ref{bol612} and Lemma~\ref{bolremark}),
and so $\frac{g}{m}$ is around $0$.

For $\lambda > \frac{1}{2}$,
$\kappa$ now decreases more slowly with $m$ (see Lemma~\ref{derivlemma}),
and so $\frac{\kappa - n}{m}$ increases
(i.e.~gets closer to $0$),
hence $\frac{g}{m}$ increases.
By the time $\lambda$ is very large,
$\frac{\kappa}{m}$ and $\frac{n}{m}$ will both be very small,
and so $\frac{g}{m}$ is close to $\frac{1}{2}$.

Finally, for the superlinear case $m \gg n$,
we have $\frac{n}{m} = o(1)$ and $\frac{\kappa}{m} \leq \frac{n}{m} = o(1)$,
but $\frac{f}{m}$ now grows beyond $o(1)$,
and so $\frac{g}{m}$ decreases.
However, due to the well-known inequality $m \leq 3n-6+6g$,
we still have a lower bound 
$\frac{g}{m} \geq \frac{1}{6} + \frac{2-n}{2m} = \frac{1}{6} + o(1)$.

Let us now conclude this section by highlighting some of the remaining open problems.

For the topic of $g(n,m)$,
we have provided a thorough description with accuracy of $1+o(1)$,
but it would be interesting to know whether even more precise results can be obtained.
In particular,
it would be nice to determine the exact behaviour of $g(n,m)$
in the region when $m = \Theta \left( n^{1+\frac{1}{j}} \right)$,
for which we currently have the bounds
$(1+o(1)) \frac{(j-1)m}{2(j+1)} \leq g(n,m) \leq \frac{jm}{2(j+2)}$ whp from~\cite{rod}.
Also, would it be possible to extend our whp results to exponentially whp?

We then showed that adding $k$ edges
(for $k \to \infty$)
to any given connected graph $H$ with bounded maximum degree
will whp result in a supergraph with genus 
$\Theta \left( \max \left\{ g(H), k \right\} \right)$.
Again, it would be interesting to know whether this result can be improved further, for instance to accuracy of $1+o(1)$.

Next,
we recall that one of our motivations for this paper came from connections with random graphs on given surfaces.
We have observed that such graphs
are contiguous with $G(n)$ and $G(n,m)$ beyond certain values of $g$,
but it would be fascinating to ascertain how greatly the properties of these models differ from $G(n)$ and $G(n,m)$ when $g$ is just below the contiguity threshold.

Finally,
let us mention that it seems that analogous results to those in this paper
can also be obtained for non-orientable surfaces.

\section*{Acknowledgements}

A major part of this work was performed when the third author visited the Institute of Discrete Mathematics at TU Graz.
We would like to thank Philipp Spr\"ussel for helpful discussions,
and we are also grateful to the referees of both the full version
and the extended abstract of this paper.

\end{document}